\DeclareMathAlphabet\oldmathcal{OMS}        {cmsy}{b}{n}
\SetMathAlphabet    \oldmathcal{normal}{OMS}{cmsy}{m}{n}
\DeclareMathAlphabet\oldmathbcal{OMS}       {cmsy}{b}{n}
\newtheorem{theorem}{Theorem}
\newtheorem{lemma}[theorem]{Lemma}
\newtheorem{proposition}[theorem]{Proposition}
\newtheorem{definition}[theorem]{Definition}
\newenvironment{example}{\medskip \refstepcounter{theorem}
\noindent  {\bf Example \thetheorem}.\rm}{\,}
\newenvironment{remark}{\medskip \refstepcounter{theorem}
\noindent  {\bf Remark \thetheorem}.\rm}{\,}
\newtheorem*{ack}{Acknowledgements}
\renewcommand{\thetheorem}{\thesection.\arabic{theorem}}
\def\d{\partial}
\def\<{\langle}
\def\>{\rangle}
\def\w{\wedge}                                     
\def\BOne{{\mathchoice {\rm 1\mskip-4mu l} {\rm 1\mskip-4mu l}
                          {\rm 1\mskip-4.5mu l} {\rm 1\mskip-5mu l}}}
\def\fract#1#2{\raise4pt\hbox{$ #1 \atop #2 $}}
\def\decdnar#1{\phantom{\hbox{$\scriptstyle{#1}$}}
\left\downarrow\vbox{\vskip15pt\hbox{$\scriptstyle{#1}$}}\right.}
\def\bbc{{\mathbb C}}
\def\bbn{{\mathbb N}}
\def\bbp{{\mathbb P}}
\def\bbq{{\mathbb Q}}
\def\bbr{{\mathbb R}}
\def\bbz{{\mathbb Z}}
\def\grl{\lambda}
\def\grS{\Sigma}
\def\bfa{{\bf a}}
\def\bfw{{\bf w}}
\def\bfz{{\bf z}}
\def\calo{{\mathcal O}}
\def\cald{{\mathcal D}}
\def\calf{{\mathcal F}}
\def\calo{{\mathcal O}}
\def\cals{{\oldmathcal S}}
\def\calz{{\oldmathcal Z}}
\def\la#1{\hbox to #1pc{\leftarrowfill}}
\def\ra#1{\hbox to #1pc{\rightarrowfill}}
\def\gc{{\mathfrak c}}
\def\gf{{\mathfrak f}}
\def\gi{{\mathfrak i}}
\def\gt{{\mathfrak t}}
\def\gu{{\mathfrak u}}
\def\gA{{\mathfrak A}}
\def\gD{{\mathfrak D}}
\def\gF{{\mathfrak F}}
\def\gM{{\mathfrak M}}
\def\gP{{\mathfrak P}}
\def\dif{\gD\gi\gf\gf(M)}
\def\hook{\mathbin{\hbox to 6pt{%
                 \vrule height0.4pt width5pt depth0pt
                 \kern-.4pt
                 \vrule height6pt width0.4pt depth0pt\hss}}}
\def\lcm{{\rm lcm}}
\def\coker{{\rm coker}}
\def\id{{\rm Id}}
\def\rk{{\rm rank\,}}
\begin{document}
\date{\today}
\title{Brieskorn Manifolds, Positive Sasakian Geometry, and Contact Topology}
\author{Charles P. Boyer, Leonardo Macarini, and Otto van Koert}\thanks{The first author was partially supported by a grant (\#245002) from the Simons Foundation. The second author was partially supported by CNPq, Brazil. The third author was supported by a stipend from the Humboldt foundation.
}
\address{Charles P. Boyer, Department of Mathematics and Statistics,
University of New Mexico, Albuquerque, NM 87131.}
\email{cboyer@math.unm.edu} 
\address{Leonardo Macarini, Universidade Federal do Rio de Janeiro, Instituto de Matem\'atica, Cidade Universit\'aria, CEP, 21941-909 - Rio de Janeiro Brazil} \email{leonardo@impa.br}
\address{Otto van Koert, Department of Mathematics and Research Institute of Mathematics, Seoul National University,
Building 27, room 402, San 56-1, Sillim-dong, Gwanak-gu, Seoul, South Korea, Postal code 151-747}
\email{okoert@snu.ac.kr}

\maketitle

\markboth{Positive Sasakian Geometry and Contact Topology}{Charles P. Boyer, Leonardo Macarini, and Otto van Koert}

\begin{abstract}
Using $S^1$-equivariant symplectic homology, in particular its mean Euler characteristic, of the natural filling of links of Brieskorn-Pham polynomials, we prove the existence of infinitely many inequivalent contact structures on various manifolds, including in dimension $5$ the $k$-fold connected sums of $S^2\times S^3$ and certain rational homology spheres. We then apply our result to show that on these manifolds the moduli space of classes of positive Sasakian structures has infinitely many components. We also apply our results to give lower bounds on the number of components of the moduli space of Sasaki-Einstein metrics on certain homotopy spheres. Finally a new family of Sasaki-Einstein metrics of real dimension $20$ on $S^5$ is exhibited.
\end{abstract}

\section{Introduction}
Brieskorn manifolds have played an important role in the development of both Sasakian geometry and contact topology. On the one hand they provide examples of Sasaki-Einstein metrics as well as Sasaki metrics of positive Ricci curvature on many manifolds, in particular on spheres including exotic ones \cite{BGN03b,BG05h,BGK05,BG05}. On the other hand they provide examples of distinct contact structures with the same classical invariants \cite{Ust99,vKoe05,vKoe08,KwvKo13,Fauck:RFH,Ueb15}. 

The purpose of this note is to use the $S^1$-equivariant symplectic homology (in particular, the mean Euler characteristic) of Liouville fillings of Brieskorn manifolds to obtain connectivity results about certain moduli spaces of Sasakian structures. Kwon and van Koert \cite{KwvKo13} have shown that every rational number can be realized as the mean Euler characteristic of some contact structure on the 5-sphere $S^5$. Since the mean Euler characteristic is a contact invariant, this implies that there are infinitely many distinct contact structures on $S^5$. However, not all of these admit a compatible Sasaki metric, but there are infinitely many that do, namely those coming from the Brieskorn manifold with exponents $(2,2,p,q)$ where $p,q$ are relatively prime positive integers. Indeed, these all have positive Sasakian structures. Thus, the moduli space of deformation classes of positive Sasakian structures on $S^5$ has infinitely many components. On the other hand, Koll\'ar \cite{Kol05b} has shown that certain 5-dimensional rational homology spheres have precisely one component of the moduli space of deformation classes of positive Sasakian structures. In fact, many five dimensional rational homology spheres do not carry a positive Sasakian structure at all, although they can admit other Sasakian structures.

In Section \ref{defsassec} we define the moduli space $\gM_{+,0}^c(M)$ of positive Sasaki classes with $c_1(\cald)=0$ on a given manifold $M$ of Sasaki type. By the transverse version of Yau's Theorem, each element of $\gM_{+,0}^c(M)$ has a Sasaki metric with positive Ricci curvature. One of the main purposes of this paper is to understand the set $\pi_0(\gM_{+,0}^c(M))$ of components of $\gM_{+,0}^c(M)$. As mentioned above, we know from \cite{KwvKo13} that $|\pi_0(\gM_{+,0}^c(S^5))|=\aleph_0$. We first consider dimension 5 where we employ Smale's classification and notation \cite{Sm62} of simply connected spin manifolds (note that a simply connected 5-manifold admitting contact structures with $c_1(\cald)=0$ must be spin). Here $M_k$ denotes the rational homology $5$-sphere with $H_2(M_k,\bbz)\approx \bbz_k\oplus \bbz_k$, $lM_k$ denotes the $l$-fold connected sum of $M_k$, and $k(S^2\times S^3)$ denotes the $k$-fold connected sum of $S^2\times S^3$. We now describe our main results:

\begin{theorem}\label{ks2s3comp}
For each $k=1,2,\dots$ we have $|\pi_0(\gM_{+,0}^c(k(S^2\times S^3)))|=\aleph_0$. Moreover, each component belongs to a distinct contact structure, so there are infinitely many inequivalent contact structures of positive Sasaki type with vanishing first Chern class on $k(S^2\times S^3)$ for each such $k$.
\end{theorem}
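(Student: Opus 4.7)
The plan is to exhibit, for each fixed $k$, an infinite family of Brieskorn--Pham links $L(\bfa^{(j)}) = L(a_0^{(j)},a_1^{(j)},a_2^{(j)},a_3^{(j)})$, indexed by $j \in \bbn$, such that each $L(\bfa^{(j)})$ is diffeomorphic to $k(S^2\times S^3)$, carries a natural positive Sasakian structure with $c_1(\cald)=0$, and such that the mean Euler characteristic $\chi_m(L(\bfa^{(j)}),\xi_{\bfa^{(j)}})$ of the contact structure induced by the Sasakian structure takes pairwise distinct values as $j$ varies. Since the mean Euler characteristic is a contact invariant (derived from the $S^1$-equivariant symplectic homology of the natural Milnor/Liouville filling), this immediately produces infinitely many inequivalent contact structures and hence infinitely many components in $\gM_{+,0}^c(k(S^2\times S^3))$.

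The first step is to isolate the family. I would choose the exponents of the form $\bfa^{(j)} = (2,3,a,b_j)$ (or an analogous three-parameter slice), where $(a,b_j)$ are chosen so that by Brieskorn's classical graph calculus for the diffeomorphism type of five-dimensional Brieskorn manifolds, together with the Smale--Barden classification, the link is simply connected, spin, and has $H_2 \cong \bbz^{2k}$, forcing $L(\bfa^{(j)}) \cong k(S^2\times S^3)$. At the same time, I must impose the arithmetic condition on $\bfa^{(j)}$ equivalent to $c_1(\cald)=0$, namely that $|\bfa| := \lcm(a_0,\dots,a_3)\sum_i \frac{1}{a_i} - \lcm(a_0,\dots,a_3)$ vanishes (up to the standard Brieskorn normalization), and the positivity condition $\sum_i \frac{1}{a_i} > 1$ which ensures $c_1^B > 0$ and hence, via the transverse Yau theorem, a Sasaki metric of positive Ricci curvature. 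The output of this step is thus a Diophantine family that simultaneously hits the desired diffeomorphism type, has vanishing Chern class of the contact bundle, and is positive.

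The second step is the invariant computation. Using the formula for $\chi_m$ of a Brieskorn filling in terms of the exponents $\bfa$ (as given by Kwon--van Koert and extended earlier in the paper to the $c_1(\cald)=0$ setting), I would express $\chi_m(L(\bfa^{(j)}),\xi_{\bfa^{(j)}})$ as a rational function of the parameters defining $b_j$, and verify that along the chosen infinite sub-family this rational function realizes infinitely many distinct rational values, for instance by extracting an unbounded numerator or tracking a monotone denominator. Distinct values of $\chi_m$ give distinct contact structures, so the Sasakian structures on the $L(\bfa^{(j)})$ represent distinct classes in $\pi_0(\gM_{+,0}^c(k(S^2\times S^3)))$, and in fact lie over pairwise inequivalent contact structures, establishing both conclusions of the theorem.

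The main obstacle, to my mind, is the joint feasibility of the conditions in step one. Topology, positivity, and $c_1(\cald)=0$ are each numerical constraints on $(a_0,a_1,a_2,a_3)$, and they pull in different directions: positivity $\sum 1/a_i > 1$ forces the exponents to be small, while producing $H_2$ of rank exactly $2k$ and varying $\chi_m$ requires at least one exponent to run through an infinite progression. The delicate point is therefore to produce an infinite arithmetic progression of exponents inside the positivity cone that stays on the hypersurface $c_1(\cald)=0$ and for which Brieskorn's graph computation stably yields $k(S^2\times S^3)$. The remaining pieces—mean Euler characteristic computation, contact invariance of $\chi_m$, and the passage from distinct contact structures to distinct components of $\gM_{+,0}^c$—are then essentially formal given the results developed earlier in the paper.
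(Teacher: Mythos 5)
Your overall strategy---exhibit, for each $k$, an infinite family of positive BP links diffeomorphic to $k(S^2\times S^3)$ and separate their contact structures by the mean Euler characteristic of the natural filling---is exactly the paper's. But the step you yourself flag as ``the main obstacle,'' namely actually producing such a family, is the entire mathematical content of the proof, and you leave it unresolved; worse, the slice you propose, $\bfa^{(j)}=(2,3,a,b_j)$, cannot work. By the Brieskorn graph theorem the positive $5$-dimensional links with exponent vector beginning $(2,3,\dots)$ are either $S^5$ or rational homology spheres with torsion $H_2$ (the manifolds $M_2,M_3,M_5,2M_3,4M_2$ of Theorem \ref{rathomsphcomp}); to get $H_2$ free of rank $2k$ one needs the exponent $2$ repeated. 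The family the paper uses is $L(2,2,p,q)$ with $\gcd(p,q)=k+1$, for which Lemma \ref{ks2s3} gives $\chi_m=\frac{pq+n^2}{2(p+q)}$ with $n=\gcd(p,q)$. Note also that even once the right family is in hand, your final step (``extracting an unbounded numerator or tracking a monotone denominator'') requires care: along the naive sub-family $p=n$, $q=nm$ the invariant is constantly $n/2$, so one must choose the progression (e.g.\ $p=2n$, $q=nm$ with $m$ odd) to see infinitely many values.

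There is a second, self-contradictory error in your setup of the constraints. You propose to impose $\lcm_i(a_i)\sum_i\frac{1}{a_i}-\lcm_i(a_i)=0$ as the condition for $c_1(\cald)=0$. This is both unnecessary and wrong: $c_1(\cald)=0$ holds automatically for the link of any weighted homogeneous polynomial (Proposition 9.2.4 of \cite{BG05}, as used in Lemma \ref{poslem}), so it imposes no Diophantine constraint at all. The quantity you set to zero is $\lcm_i(a_i)\bigl(\sum_i\frac{1}{a_i}-1\bigr)$, i.e.\ half the Maslov index $\mu_P$ of a principal orbit (the Fano index), and its vanishing is the condition for the \emph{null} type $\sum_i 1/a_i=1$, which directly contradicts the positivity condition $\sum_i 1/a_i>1$ that you impose in the same sentence. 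As written, your system of constraints has no solutions, so the ``joint feasibility'' problem you worry about is unsolvable for the spurious reason that you have added an incompatible equation rather than for the genuine topological reason.
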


While, as mentioned above, for certain rational homology spheres $M$, $\gM_{+,0}^c(M)$ has precisely one component, on the other hand we have

\begin{theorem}\label{rathomsphcomp}
For the rational homology 5-spheres 
$$M=M_2,M_3,M_5,2M_3,4M_2$$ 
we have $|\pi_0(\gM_{+,0}^c(M))|=\aleph_0$. Moreover, each component belongs to a distinct contact structure, so there are infinitely many inequivalent contact structures of positive Sasaki type on each of the above rational homology 5-spheres.
\end{theorem}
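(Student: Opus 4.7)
The plan is to run, for each target manifold $M\in\{M_2,M_3,M_5,2M_3,4M_2\}$, the same strategy used for Theorem \ref{ks2s3comp}: produce an infinite sequence of Brieskorn-Pham exponent vectors $\bfa_n=(a_{n,0},a_{n,1},a_{n,2},a_{n,3})\in\bbz_{\geq 2}^4$ whose links $L(\bfa_n)$ are all diffeomorphic to $M$, carry a natural positive Sasakian structure with $c_1(\cald)=0$, and realize pairwise distinct values of the mean Euler characteristic of the $S^1$-equivariant symplectic homology of the Milnor fiber. Since the mean Euler characteristic is a contact invariant, this gives infinitely many pairwise inequivalent contact structures, each supporting a class in $\gM_{+,0}^c(M)$; because two Sasakian structures in the same component of $\gM_{+,0}^c(M)$ induce contactomorphic contact structures, this forces $|\pi_0(\gM_{+,0}^c(M))|=\aleph_0$ with components witnessed by distinct contact classes.

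The first step is to exhibit the exponent families. I would use the Brieskorn diffeomorphism classification of links in dimension five, in the refined form developed by Koll\'ar and Boyer-Galicki for rational homology 5-spheres, to translate the prescribed second homology $H_2(M_k;\bbz)\cong\bbz_k\oplus\bbz_k$ (and the connected sum generalizations for $2M_3,4M_2$) into explicit congruence conditions on the $a_{n,i}$. On top of these I would impose the positivity inequality $\sum_i a_{n,i}^{-1}>1$ and the integrality condition for $c_1(\cald)=0$ recalled in Section \ref{defsassec}. The remaining degree of freedom, namely varying a single exponent $a_{n,3}$ within a fixed residue class modulo the least common multiple of the others, should yield an infinite family for each of the five target manifolds.

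The second step is the contact-theoretic distinction. I would substitute $\bfa_n$ into the Kwon--van Koert formula that expresses the mean Euler characteristic of a Brieskorn link as an explicit rational function of its exponents, and then verify that the resulting one-variable function of $n$ is non-constant; in fact its leading behavior in $n$ already forces infinitely many values, so that the contact structures $\{\xi_n\}$ on $M$ are pairwise non-contactomorphic.

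The final step is the Sasakian upgrade, which is essentially automatic once steps one and two are in place: the natural Sasakian structure on $L(\bfa_n)$ lies in $\gM_{+,0}^c(M)$ by construction, and the contact distinction of step two promotes to a component distinction in the moduli space, exactly as in the proof of Theorem \ref{ks2s3comp}. The main obstacle is step one. Unlike the case of $k(S^2\times S^3)$, where the family $(2,2,p,q)$ with $\gcd(p,q)=1$ provides an immediate infinite supply matching every $k$, here the arithmetic of matching the prescribed torsion groups $\bbz_2\oplus\bbz_2$, $\bbz_3\oplus\bbz_3$, $\bbz_5\oplus\bbz_5$, $\bbz_3^{\oplus 4}$, $\bbz_2^{\oplus 8}$ while simultaneously enforcing positivity and $c_1(\cald)=0$ requires a case-by-case search through Koll\'ar's lists, and this bookkeeping — rather than any analytic or topological obstruction — is where the real work lies.
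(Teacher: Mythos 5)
Your proposal follows essentially the same route as the paper: the paper takes the infinite families $L(2,3,3,3+6k)$, $L(2,3,4,4+12k)$, $L(2,3,4,8+12k)$, $L(2,3,5,6+30k)$, etc.\ directly from the classification of positive BP links in Table B.4.1 of \cite{BG05}, computes their mean Euler characteristics as explicit non-constant rational functions of $k$ via Proposition \ref{prop:mean_euler_S^1-orbibundle}, and concludes exactly as you describe. The only cosmetic difference is that the ``bookkeeping'' you flag as the main obstacle is already done in the cited tables (and note that $c_1(\cald)=0$ holds automatically for any link of a weighted homogeneous polynomial, so it is not an extra condition to impose).
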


These are the only nontrivial rational homology $5$-spheres realized as the link of a Brieskorn-Pham (BP) polynomial with infinitely many inequivalent contact structures of positive Sasaki type. For a complete discussion of Koll\'ar's results on positive Sasakian structures in dimension 5 we refer to 10.2.1 of \cite{BG05} as well as the original paper \cite{Kol05b}. Further results can also be found in \cite{BN09}. However, these results are beyond the scope of the present paper in that they do not arise from BP polynomials. A classification of positive Sasakian structures on 5-manifolds represented by BP polynomials is found in the tables of Appendix B.4 of \cite{BG05}.

For dimension one mod 4 and larger than 5 we consider the manifolds
$$S^{2n}\times S^{2n+1},~S^{2n}\times S^{2n+1}\#\grS^{4n+1},~\text{and} ~T,$$
where $\grS^{4n+1}$ is a generator of the group $bP_{4n+2}$ of exotic spheres that bound parallelizable manifolds of dimension $4n+2$, and $T$ is the unit tangent sphere bundle over $S^{2n+1}$. We remark that $bP_{4n+2}$ is the identity for $n=1,3,7,15$ and it is $\bbz_2$ when $4n+2\neq 2^i-2$ for $i\geq 3$, otherwise it is unknown. It is shown in \cite{BG05h} that these manifolds admit positive Sasaki metrics. Concerning their moduli we now have

\begin{theorem}\label{highdim}
The following hold:
\begin{align*}
& |\pi_0(\gM_{+,0}^c(S^{2n}\times S^{2n+1}))| = \aleph_0, \\
& |\pi_0(\gM_{+,0}^c(S^{2n}\times S^{2n+1}\#\grS^{4n+1}))| =  \aleph_0, \\
& |\pi_0(\gM_{+,0}^c(T))| =\aleph_0.
\end{align*}
Moreover, each component belongs to a distinct contact structure, so there are infinitely many inequivalent contact structures of positive Sasaki type on each of the above smooth manifolds.
\end{theorem}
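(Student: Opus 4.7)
The plan is to realize each of the three manifolds as the link of a Brieskorn--Pham (BP) polynomial in infinitely many ways that yield positive Sasakian structures with $c_1(\cald)=0$, and then to distinguish the resulting contact structures by computing the mean Euler characteristic $\chi_m$ of their natural Liouville filling. The overall logic runs parallel to the five-dimensional argument behind Theorem \ref{ks2s3comp} and Theorem \ref{rathomsphcomp}, with the added complication that in higher dimensions one must also keep track of the diffeomorphism type across the whole infinite family.

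First I would fix the combinatorics of the exponents. A BP polynomial $z_0^{a_0}+\cdots+z_{2n+1}^{a_{2n+1}}$ has link of real dimension $4n+1$. For the first two target manifolds I would use the exponent family $\bfa=(2,\ldots,2,p,q)$ with $2n$ twos and $(p,q)$ coprime: by Brieskorn's diffeomorphism classification for links with mostly-quadratic exponents, the link is either $S^{2n}\times S^{2n+1}$ or $S^{2n}\times S^{2n+1}\#\grS^{4n+1}$ depending on an explicit arithmetic condition on $(p,q)$ tied to the Arf/Kervaire invariant and to the identification of $\grS^{4n+1}$ as a generator of $bP_{4n+2}$. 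In each regime the admissible pairs form an infinite family. For the unit tangent bundle $T$ of $S^{2n+1}$ one uses instead a family of BP links identified with $T$ by the same topological calculations (concretely, a BP polynomial obtained by perturbing the all-twos polynomial whose link is already the Stiefel manifold $V_2(\bbr^{2n+2})\cong T$, while preserving the diffeomorphism type). For all exponent tuples above, the positivity criterion $\sum a_i^{-1}>1$ is immediate and the standard Reeb-weight identity forces $c_1(\cald)=0$, so each structure represents a point of $\gM_{+,0}^c(M)$.

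Next I would apply the explicit combinatorial formula (developed earlier in this paper, following \cite{KwvKo13}) expressing $\chi_m$ of the Milnor-fibre filling of a BP polynomial as a rational function of its exponents. Specializing to the families above and letting the free parameters ($p,q$, respectively the perturbing exponents in the $T$-family) range through an appropriate infinite arithmetic progression within a fixed diffeomorphism class, a direct inspection shows that $\chi_m(\bfa)$ takes infinitely many distinct rational values. Since $\chi_m$ is a contactomorphism invariant, this exhibits infinitely many pairwise non-contactomorphic contact structures on each of the three manifolds. Because two points in the same component of $\gM_{+,0}^c(M)$ must have isotopic, hence contactomorphic, underlying contact structures, each distinct $\chi_m$-value contributes a distinct component, yielding $|\pi_0(\gM_{+,0}^c(M))|=\aleph_0$ in each case.

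The main obstacle is the simultaneous control of diffeomorphism type and of $\chi_m$: the BP exponent family must be infinite while staying inside one diffeomorphism class, and the $\chi_m$-formula must then take infinitely many values along that family. The diffeomorphism identification rests on Brieskorn's computation of the intersection form together with the Kervaire--Milnor classification in dimension $4n+1$ (to separate $S^{2n}\times S^{2n+1}$ from its exotic twist and to pin down $T$); the $\chi_m$-computation rests on the detailed combinatorics of the Reeb spectrum of the BP contact form, which in these higher-dimensional cases requires a more careful estimate than in the five-dimensional situation but is governed by the same basic formula.
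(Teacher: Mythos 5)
Your overall strategy is exactly the paper's: realize each manifold by infinitely many Brieskorn--Pham links carrying positive Sasakian structures with $c_1(\cald)=0$, distinguish the contact structures by the mean Euler characteristic of the natural filling, and conclude via Gray stability that distinct contact structures force distinct components of $\gM^c_{+,0}$. The weak point is the step you yourself flag as the main obstacle, namely pinning down the diffeomorphism type along an infinite exponent family, and here your proposed family for the first two manifolds does not work as stated. You take $\bfa=(2,\ldots,2,p,q)$ with $2n$ twos and $(p,q)$ coprime, extrapolating from the $5$-dimensional fact that $L(2,2,p,q)\approx(\gcd(p,q)-1)(S^2\times S^3)$. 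But that $5$-dimensional statement produces $S^2\times S^3$ precisely when $\gcd(p,q)=2$, not when $p,q$ are coprime; and in dimension $4n+1$ the Brieskorn graph theorem (Theorem 9.3.18 of [BG08]) shows that for $p,q$ odd and coprime the graph of $(2,\ldots,2,p,q)$ has two isolated points, so the link is a homotopy sphere rather than $S^{2n}\times S^{2n+1}$ or its exotic twist. The correct families, taken from [BG06] and Section 9.5.2 of [BG08], have a \emph{single} non-quadratic exponent: $L(8l,2,\ldots,2)\approx S^{2n}\times S^{2n+1}$, $L(8l+4,2,\ldots,2)\approx S^{2n}\times S^{2n+1}\#\grS^{4n+1}$, and $L(4l+2,2,\ldots,2)\approx T$; the residue of the exponent mod $8$ (resp.\ mod $4$) is what controls the diffeomorphism type, not an Arf-invariant condition on a coprime pair. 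Your treatment of $T$ (perturbing the all-twos quadric) is in effect the $L(4l+2,2,\ldots,2)$ family and is fine.

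Once the families are corrected, the rest of your argument goes through and matches the paper: positivity is immediate since $\sum_i a_i^{-1}\geq (2n+1)/2>1$, $c_1(\cald)=0$ holds for any link of a weighted homogeneous polynomial, and the mean Euler characteristic, computed from the explicit formula for BP links (the paper simply invokes Appendix A of [KvK13]), takes infinitely many distinct rational values as $l$ varies within each residue class, so each family yields $\aleph_0$ pairwise non-contactomorphic structures and hence $\aleph_0$ components of $\gM^c_{+,0}$. You should, however, actually exhibit the injectivity of $l\mapsto\chi_m$ on each family rather than asserting it by ``direct inspection,'' since that is the only remaining quantitative input.
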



We now want to consider BP links which admit Sasaki-Einstein metrics on homotopy spheres. (For convenience we exclude the standard round sphere from any discussion in this paper. Thus, all contact structures on spheres considered here are exotic.) However, owing to the lack of a completeness theorem a la Kodaira-Spencer in the weighted polynomial case, we only obtain definitive results for the subspace of Sasaki-Einstein metrics that are represented by distinct weighted homogeneous polynomials $\gM^{WHSE}\subset \gM^{SE}$. (See the discussion on pages 177-178 of \cite{BG05}). See also Chapter 11 of \cite{BG05} and references therein. The homotopy spheres we consider here are $S^5,S^7,S^9$. In particular, we obtain lower bounds on the number of components of the local moduli space $\gM^{SE}$ of each diffeomorphism type. It is unknown whether the number of components of $\gM^{WHSE}$ is actually finite, but only finitely many are known on a homotopy sphere of any dimension. The 5-sphere $S^5$ has a unique diffeomorphism type and until now there were $81$ known families of SE metrics, not including the standard round sphere, given by $80$ families established in \cite{BGK05,GhKo05} plus the Li-Sun example (see Remark \ref{SE2}). However, we have found a new family of SE metrics with $20$ real parameters given by Theorem \ref{newSE} below, giving a total of $82$. For these $82$ families we have an Excel table that shows that there are $7$ pairs whose contact structures cannot be distinguish by the mean Euler characteristic. However, one pair can be distinguished by their $S^1$ equivariant symplectic homology, see Lemma~\ref{lemma:examples_differ}. Therefore, we have

\begin{theorem}\label{s5SEthm}
$|\pi_0(\gM^{SE}(S^5))|\geq 76$. Concerning $\gM^{WHSE}$ there are $55$ components consisting of single points, eighteen components of real dimension $2$, four components of real dimension $4$, one component of real dimension $6$, two components of real dimension $8$, one component of real dimension $10$, and one component of real dimension $20$. Moreover, the $82$ components of $\gM^{WHSE}$ belong to at least $76$ distinct components of $\gM^c_{+,0}(S^5)$.
\end{theorem}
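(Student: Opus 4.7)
The plan is to assemble the moduli count from three separate inputs and then distinguish the resulting contact structures using the mean Euler characteristic and, when that fails, the full $S^1$-equivariant symplectic homology.

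First I would enumerate the $82$ families of weighted homogeneous Sasaki--Einstein structures on $S^5$: the $80$ families from \cite{BGK05,GhKo05} coming from Brieskorn--Pham and more general weighted homogeneous polynomials whose links are diffeomorphic to $S^5$ and satisfy the Gauchi--Kollár obstruction criteria for the existence of a Kähler--Einstein metric on the base orbifold, the Li--Sun example from Remark \ref{SE2}, and the new $20$-real-parameter family produced in Theorem \ref{newSE}. For each family I would record the complex dimension of the moduli of weighted homogeneous polynomials with the given weights modulo the induced $\bbc^*$-action and isomorphism; this produces the stated tally $55+18+4+1+2+1+1=82$ with real dimensions $0,2,4,6,8,10,20$. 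This part is a bookkeeping exercise on the weight tables of Appendix B of \cite{BG05} together with the weights of the new family in Theorem \ref{newSE}.

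Next I would compute, for each of the $82$ families, the mean Euler characteristic of the link, viewed as the boundary of its canonical Liouville filling (the Milnor fiber). Using the formula for the $S^1$-equivariant symplectic homology of links of weighted homogeneous singularities in terms of the weights, this reduces to a finite rational computation that depends only on the weight vector (and not on the specific coefficients of the polynomial); hence it is constant along each component of $\gM^{WHSE}$. Since the mean Euler characteristic is a contact invariant, any two components yielding different values correspond to inequivalent contact structures and therefore lie in distinct components of $\gM^c_{+,0}(S^5)$. Organizing these $82$ rational numbers in a spreadsheet, one reads off the number of coincidences; by hypothesis there are exactly $7$ coincident pairs, so the mean Euler characteristic alone produces at least $82-7=75$ distinct contact classes.

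To obtain the final bound $76$, I would invoke Lemma \ref{lemma:examples_differ}: for one of the $7$ coincident pairs the full $S^1$-equivariant symplectic homology (rather than just its Euler characteristic) already distinguishes the two contact structures, pushing the count from $75$ up to $76$. Since each component of $\gM^{WHSE}$ is contained in a component of $\gM^{SE}$, and distinct contact classes force distinct components of $\gM^{SE}$, we conclude $|\pi_0(\gM^{SE}(S^5))|\geq 76$, completing the theorem.

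The main obstacle is not the bookkeeping but the contact-invariant computation: one has to verify that the weight-theoretic formula for the mean Euler characteristic really is valid for all $82$ links (in particular that the contact invariants extracted from $S^1$-equivariant symplectic homology of the Milnor fiber are insensitive to the Liouville-filling choice), and then in the single coincident pair treated by Lemma \ref{lemma:examples_differ} one must extract genuinely finer information from the graded group, not just its Euler characteristic. The remaining six coincidences we leave as potential further identifications, which is why the bound is $76$ rather than $82$.
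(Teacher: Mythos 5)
Your proposal follows essentially the same route as the paper: the $82$ families and their moduli dimensions come from Proposition \ref{hyslocmod} applied to the lists of \cite{BGK05,GhKo05}, the Li--Sun example, and Theorem \ref{newSE}; the mean Euler characteristics are tabulated, yielding $7$ coincident pairs; and Lemma \ref{lemma:examples_differ} resolves exactly one of them via the full graded $SH^{+,S^1}$, giving the bound $82-6=76$. The only quibble is the misspelling ``Gauchi--Koll\'ar'' for Ghigi--Koll\'ar; otherwise the argument, including the caveat that the invariants must be filling-independent (handled in the paper by Lemma \ref{lemma:equi_SH_invariant} and Proposition \ref{prop:mean_euler_S^1-orbibundle}), matches the paper's proof.
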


Of the $6$ pairs of elements of $\gM^{WHSE}$ whose contact structures cannot be distinquished, one has both members with zero dimensional moduli and one has both members with 2 real dimensional moduli. The remaining four cases consist of a pair of elements of $\gM^{WHSE}$ with real dimensions two and six, two and zero, eight and zero, and zero and four, respectively. The elements of a pair probably belong to distinct components of $\gM^{SE}$, but we do not have a proof of this at this time. On the other hand there is no reason to believe one way or the other whether they belong to distinct elements of $\gM^c_{+,0}(S^5)$ or not. 

In the case of homotopy $S^7$s and $S^9$s, we do not determine the dimension of the various components, although in principle it can be done with a computer program. Our results for the homotopy $7$-spheres are given in Section \ref{homotopy7sphsec} below where we have a table from which lower bounds for $|\pi_0(\gM^{SE}(\grS^7))|$ for each of the 28 homotopy $\grS^7$s can be easily obtained. For homotopy $S^9$s we have

\begin{theorem}\label{s9SEthm}
For the standard diffeomorphism type on $S^9$ we have $|\pi_0(\gM^{SE}(S^9))|\geq 983$ and for the exotic $\grS^9$ we have $|\pi_0(\gM^{SE}(\grS^9))|\geq 494$. Moreover, the components belong to distinct components of \linebreak $\gM^c_{+,0}(S^9)$ and $\gM^c_{+,0}(\grS^9)$, respectively.
\end{theorem}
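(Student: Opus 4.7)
The plan is to mirror the strategy that the paper indicates for dimension five, but now applied to the list of Brieskorn--Pham polynomials of the form
\[
f(z_0,\ldots,z_5)=z_0^{a_0}+z_1^{a_1}+z_2^{a_2}+z_3^{a_3}+z_4^{a_4}+z_5^{a_5}
\]
whose links are homotopy $9$-spheres admitting Sasaki--Einstein metrics. First I would invoke the Brieskorn graph theorem to identify those $6$-tuples $(a_0,\ldots,a_5)$ for which the link $L(\bfa)$ is a topological $9$-sphere, and then use the $bP_{10}=\bbz_2$ classification via the signature of the associated Milnor fiber mod $2$ to partition the resulting tuples into those producing the standard $S^9$ and those producing the exotic $\grS^9$. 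In parallel, I would apply the Ghigi--Koll\'ar / Boyer--Galicki--Nakamaye existence criterion for Sasaki--Einstein metrics on BP links (the appropriate higher--dimensional analogue of the conditions used in \cite{BGK05,GhKo05}) to cut down to the subset admitting such a metric. This yields a finite catalogue of candidate exponents.

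Next I would compute for each candidate the $S^1$--equivariant symplectic homology of the natural Liouville filling of $L(\bfa)$, and in particular its mean Euler characteristic, using the closed--form expression in terms of the exponents developed earlier in the paper (in the spirit of Kwon--van Koert for $S^5$). Since the mean Euler characteristic is a contact invariant of $(L(\bfa),\cald)$, two BP links whose mean Euler characteristics differ must carry inequivalent contact structures and hence correspond to distinct elements of $\pi_0(\gM^c_{+,0})$, which in turn forces them to lie in distinct components of $\gM^{SE}$ (via the inclusion $\gM^{SE}\hookrightarrow \gM^c_{+,0}$ up to deformation).

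The main step is then a bookkeeping one: tabulate the mean Euler characteristics across the two lists (standard and exotic), count distinct values, and verify the lower bounds $983$ and $494$. Because the formula for the mean Euler characteristic is entirely combinatorial in $(a_0,\ldots,a_5)$, this can be done by a direct enumeration on a computer; the output of that enumeration is what produces the claimed numerical bounds, exactly as for the $82$ families in Theorem~\ref{s5SEthm}.

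The hard part, in my view, will not be the Sasaki--Einstein existence step (which is by now a standard application of the BGK/GhKo criterion) nor the topological identification of $S^9$ versus $\grS^9$, but rather controlling collisions in the mean Euler characteristic on such a large list: as the $S^5$ analysis already shows, some genuinely distinct contact structures can share the same mean Euler characteristic, so some pairs may need to be separated by the finer invariant of the full $S^1$--equivariant symplectic homology, in the manner of Lemma~\ref{lemma:examples_differ}. I would expect that after removing such coincidences one is left with exactly the asserted $983$ and $494$ distinguishable contact structures, which then give the desired lower bounds on $|\pi_0(\gM^{SE}(S^9))|$ and $|\pi_0(\gM^{SE}(\grS^9))|$ as well as the refinement that these components inject into $\pi_0(\gM^c_{+,0}(S^9))$ and $\pi_0(\gM^c_{+,0}(\grS^9))$ respectively.
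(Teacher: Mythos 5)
Your proposal is essentially the paper's own proof: the paper likewise enumerates the Brieskorn--Pham links that are homotopy $9$-spheres admitting Sasaki--Einstein metrics, computes their mean Euler characteristics by computer, and reads the counts $983$ and $494$ off a tabulation (an Excel file), relying on the contact invariance of the mean Euler characteristic to separate components. One small correction: since $bP_{10}\cong\bbz_2$, the standard/exotic distinction for a $9$-dimensional link is detected by the Arf (Kervaire) invariant of the $10$-dimensional Milnor fiber rather than by a signature mod $2$, but this does not affect the structure of the argument.
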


The bounds obtained in this theorem as well as in dimension $7$ are probably far from sharp. In fact, for general dimensions it is not known whether $|\pi_0(\gM^{SE}(\grS^{2n+1}))|$ is finite or not. However, the methods used so far only give rise to finitely many components, albeit with double exponential growth with dimension (see Section \ref{Sylsec}).

It is known from Ustilovsky \cite{Ust99} that for all homotopy spheres $\grS^{4n+1}$ that bound a parallelizable manifold in dimension one mod four, we have $|\pi_0(\gM_{+,0}^c(\grS^{4n+1}))|=\aleph_0$. This result has been recovered by using symplectic homology in \cite{Gutt14,Gutt15} and one can use the formulae for symplectic homology in \cite{KwvKo13} to also obtain the result. However, none of these components are known to admit SE metrics with the exception of the BP link $L(2,2,2,3)$. See Remark \ref{SE2} below. That $|\pi_0(\gM_{+,0}^c(\grS^{4n+3}))|=\aleph_0$ was obtained in 9.5.10 in \cite{BG05} using the classical invariant of Morita \cite{Mor75,Sat77}, where $\grS^{4n+3}$ denotes a homotopy sphere in $bP_{4n+4}$.

Finally, in Section \ref{Sylsec} we show that the number of components of $\gM^{SE}(S^{4n+1})$ grows doubly exponentially with dimension.

\begin{ack}
This work was born at the AIM Workshop on Transversality in Contact Homology in Palo Alto, CA, Dec. 8-12, 2014 and we would like to thank the American Institute of Mathematics for its hospitality. We also thank Fr\'ed\'eric Bourgeois and Jean Gutt for interesting and helpful discussions. In addition the first author thanks Chi Li and Song Sun for discussions concerning Remark \ref{SE2} below.
\end{ack}

\section{Deformation Classes of Sasakian Structures}\label{defsassec}
First a warning concerning notational conventions. In Sasakian geometry $\xi$ is the Reeb vector field, and the contact bundle is usually denoted by $\cald$; whereas in contact topology $\xi$ denotes the contact bundle. Here to avoid confusion we do not use $\xi$ for either the Reeb field or the contact bundle. The Reeb field of a contact form $\eta$ can be $R_\eta$ or simply $R$ when there is no possibility of confusion. Note that the contact structure $\cald$ of any link of a weighted homogeneous polynomial (whp) must have $c_1(\cald)=0$. 

Recall (cf. Chapters 6 and 7 of \cite{BG05}) that a Sasakian structure is a contact metric structure with certain nice properties. A contact metric structure on an oriented manifold $M$ is a quadruple $(R,\eta,\Phi,g)$ where $\eta$ is a contact 1-form, $R$ its Reeb vector field, $\Phi$ a section of the endomorphism bundle ${\rm End}(M)$ satisfying $\Phi R=0,~\Phi\circ\Phi=-\BOne +R\otimes \eta$, and $g$ is a compatible Riemannian metric. Here compatible means 
$$g(\Phi X,\Phi Y)= g(X,Y) -\eta(X)\eta(Y)$$
holds for all vector fields $X,Y$. With contact structure $\cald=\ker\eta$ we see that $\Phi |_\cald=J$ is an almost complex structure on $\cald$. Thus, the pair $(\cald,J)$ is a strictly pseudoconvex almost CR structure on $M$ since $d\eta\circ (\Phi\otimes \BOne)$ gives the vector bundle $\cald$ a positive definite metric such that $g=d\eta\circ (\Phi\otimes \BOne)+\eta\otimes\eta$ is a Riemannian metric on $M$. The quadruple $\cals=(R,\eta,\Phi,g)$ is a {\it Sasakian structure} if $(\cald,J)$ is a CR structure, that is, $J$ is integrable, and $R$ is a Killing vector field, that is, $\pounds_Rg=0$. Note that this last condition is equivalent to $\pounds_R\Phi=0$, which means that $R$ is a infinitesimal CR transformation. Alternatively, a contact metric structure $\cals=(R,\eta,\Phi,g)$ is Sasakian if the metric cone $(C(M)=M\times \bbr^+,\bar{g}=dr^2+r^2g)$ is K\"ahler with K\"ahler form $d(r^2\eta)$. The complex structure $I$ on $C(M)$ satisfies 
$$IX=\Phi X+ \eta(X)\Psi, \quad  I\Psi=-R$$
where $X$ is a vector field on $M$ and $\Psi=r\d_r$ is the Liouville vector field.

\begin{definition}\label{consas}
 A contact structure $\cald$ is said to be of {\bf Sasaki type} if there exists a Sasakian structure $\cals=(R,\eta,\Phi,g)$ such that $\cald =\ker~\eta.$
\end{definition}

The Reeb vector field $R$ of a contact 1-form $\eta$ defines a 1-dimensional foliation $\calf_R$ and when the contact metric structure $\cals=(R,\eta,\Phi,g)$ is Sasakian, the transverse geometry of the foliation $\calf_R$ is K\"ahlerian; hence, $\calf_R$ is a Riemannian foliation. So a Sasakian structure fixes a  transverse holomorphic structure $\bar{J}$ on the normal bundle $\nu(\calf_R)$ to the foliation $\calf_R$ such that the diagram
\begin{equation}\label{Sasakianspace2eqn}
\begin{array}{ccccc}
TM & \stackrel{\Phi}{\ra{2.5}} &  TM & \\
  \decdnar{} \! \! \mbox{{\small $\pi_\nu$}}  & & \decdnar{} \! \! 
\mbox{{\small $\pi_\nu$}} & \\
   \nu(\calf_R) &\stackrel{\bar{J}}{\ra{2.5}} &\nu(\calf_R),&
\end{array}
\end{equation}
commutes.

Given a closed oriented smooth manifold $M$, we denote by $\gF(M)$ the space of all Sasakian structures on $M$ and give it the induced topology as sections of vector bundles. We denote by $\gF(M,R,\bar{J})$ the subset of Sasakian structures with Reeb vector field $R$, with transverse holomorphic structure $\bar{J}$, and the same complex structure on the cone $C(M)$. We give $\gF(M,R,\bar{J})$ the subspace topology. A Sasakian structure chooses a preferred basic cohomology class $[d\eta]_B\in H^{1,1}_B(\calf_R)$, and if $\cals'=(R,\eta',\Phi',g')$ is another Sasakian structures in $\gF(M,R,\bar{J})$, then we have $[d\eta']_B=[d\eta]_B$. So by the transverse $\d\bar{\d}$ lemma \cite{ElK} there is a basic function $\phi$, unique up to a constant, such that $d\eta'=d\eta +i\d_B\bar{\d}_B\phi.$ Furthermore, since the complex structure on the cone remains the same $\eta'=\eta+d_B^c\phi$ \cite{FOW06}, so we can identify $\gF(M,R,\bar{J})$ with the contractible space 
$$\{\phi\in C^\infty_B~|~(\eta+d_B^c\phi)\wedge (d\eta +i\d_B\bar{\d}_B\phi)^n\neq 0,~\int_M\phi~\eta\wedge (d\eta)^n=0\},$$
where $d_B^c=\frac{i}{2}(\bar{\d}-\d).$ For each such $\phi$ we have a Sasakian structure $\cals_\phi=(R,\eta_\phi,\Phi_\phi,g_\phi)$ with $\eta_\phi=\eta +d_B^c\phi,\Phi_\phi=\Phi-R\otimes d_B^c\phi,$ and $g_\phi=d\eta_\phi\circ (\Phi_\phi\otimes \BOne)+\eta_\phi\otimes\eta_\phi$ with the same Reeb vector field $R$, the same transverse holomorphic structure $\bar{J}$ and the same holomorphic structure on the cone. The class $[d\eta]_B\in H^{1,1}_B(\calf_R)$ is called a {\it transverse K\"ahler class} or {\it Sasaki class}. In the quasi-regular case, i.e.~the Reeb flow induces a locally free circle action, this class is also called a {\it Sasaki-Seifert structure} in \cite{BG05}.

This deforms the contact structure as well, sending $\cald\mapsto \cald_\phi=\ker\eta_\phi$. Of course, these give an isotopy of equivalent contact structures by Gray's Theorem. 
The space $\gF(M,R,\bar{J})$ described above is clearly infinite dimensional, so we want to factor this part out to get a finite dimensional moduli space. So we consider the identification space $\gF(M)/\gF(M,R,\bar{J})$. It is a pre-moduli space of Sasaki classes which we denote by $\gP\gM^c_\cals$. The Sasaki class of a Sasakian structure $\cals=(R,\eta,\Phi,g)$ is denoted by $\bar{\cals}$. Generally, $\gP\gM^c_\cals$ can be non-Hausdorff, but this is not the case with the examples treated here.

The diffeomorphism group $\dif$ acts on $\gF(M)$ by sending $\cals=(R,\eta,\Phi,g)$ to $\cals^\varphi=(\varphi_*R,(\varphi^{-1})^*\eta,\varphi^{-1}_*\Phi\varphi_*,(\varphi^{-1})^*g)$ for $\varphi\in \dif$. Then the transformed structure $\cals_\phi^\varphi$ belongs to the transformed space $\gF(M,\varphi_*R,\varphi^{-1}_*\bar{J}\varphi_*)$. So $\dif$ acts on $\gP\gM^c_\cals$ to define the {\it moduli space of Sasaki isotopy classes} $\gM^c_\cals(M)$. Generally very little is known about this space. Locally $\gM^c_\cals(M)$ is determined by the deformation theory of the transverse holomorphic structure of the foliation $\calf_R$. 

We recall the {\it type} of a Sasakian structure \cite{BG05}. 

\begin{definition}\label{Sastype}
A Sasakian structure $\cals=(R,\eta,\Phi,g)$ is {\bf positive (negative)} if the basic first Chern class $c_1(\calf_R)$ is represented by a positive (negative) definite $(1,1)$-form. It is {\bf null} if $c_1(\calf_R)=0$, and {\bf indefinite} if $c_1(\calf_R)$ is otherwise. 
\end{definition}

Here we are interested in positive Sasakian structures with $c_1(\cald)=0$. Generally, the type can change within a component of $\gM^c_\cals(M)$ as indicated in \cite{BoPa10} and is being studied further in \cite{BoTo15}; however, it follows from Corollary 7.5.26 of \cite{BG05} that type change cannot occur within a component when $c_1(\cald)=0$ or more generally a torsion class.

We wish to consider deformations of the Sasaki classes by deforming the transverse holomorphic structure where a good deformation theory exists (cf. Section 8.2.1 of \cite{BG05}). However, unlike K\"ahlerian geometry, such deformations generally do not remain Sasakian \cite{Noz14}. Fortunately, positive Sasakian structures are stable under small deformations (cf. Section 1.4 of \cite{Noz14}) and these are what interest us here. Accordingly, we denote the {\it moduli space of positive Sasaki classes with $c_1(\cald)=0$} by $\gM^c_{+,0}(M)$. We are also interested in the {\it moduli space of Sasaki-Einstein metrics} which is denoted by $\gM^{SE}(M)$. Generally there is very little known about the spaces $\gM^c_{+,0}(M)$ and $\gM^{SE}(M)$. Since Sasaki-Einstein structures are positive, there is a natural map $\gc:\gM^{SE}(M)\ra{1.6} \gM^c_{+,0}(M)$ which sends an SE structure to its Sasaki class. By a theorem of Nitta and Sekiya \cite{NiSe12} if $\gc(\cals)=\gc(\cals')$ then there is a $g$ in the connected component of the group of transverse holomorphic automorphisms such that $\cals'=g(\cals)$.

\section{Brieskorn Manifolds and Sasakian Geometry}
Here we give a brief review of weighted homogeneous hypersurfaces in $\bbc^{n+1}$ and their links. See Section 4.6 and  Chapter 9 of \cite{BG05} for full details. First we define a weighted $\bbc^*$ action on $\bbc^{n+1}$ by 
\begin{equation}\label{c*act}
\bfz=(z_0,\cdots,z_n)\mapsto (\grl^{w_0}z_0,\cdots,\grl^{w_n}z_n),
\end{equation}
where $\grl\in\bbc^*$ and which we denote by $\bbc^*(\bfw)$ where $\bfw=(w_0,\cdots,w_n)\in(\bbz^+)^{n+1}$ is the {\it weight vector} and the monomial $z_i$ is said to have weight $w_i$. This induces an action of $\bbc^*(\bfw)$ on the space of polynomials $\bbc[z_0,\ldots,z_n]$ whose eigenspaces are {\it weighted homogeneous polynomials} of {\it degree} $d$ defined by 
\begin{equation}\label{whp}
f(\lambda^{w_0} z_0,\ldots,\lambda^{w_n}
z_n)=\lambda^df(z_0,\ldots,z_n)\, .
\end{equation}
We want to make two assumptions about $f$. First, $f$ should have only an isolated singularity at the origin in $\bbc^{n+1}$, and second, $f$ does not contain monomials of the form $z_i$ for any $i=0,\dots,n$. This last condition eliminates the standard constant curvature metric sphere.

The zero locus of $f$ is referred to as a hypersurface singularity and denoted by $C_f$.
The link is defined by intersecting the zero locus of $f$ with unit sphere in $\bbc^{n+1}$, namely
\begin{equation}\label{link}
L^*(\bfw,d)=\{f(\bfz)=0\}\cap S^{2n+1}.
\end{equation}
It follows from the Milnor Fibration Theorem that $L(\bfa)$ is $(n-2)$-connected. 
It was realized in the mid 70's that links of weighted homogeneous polynomials are a good source of contact manifolds \cite{AbEr75a,LuMe76,SasHsu76}. Furthermore, they have a natural Sasakian structure \cite{Abe77,Tak78}. Hence,

\begin{theorem}\label{whpsas}
Links of weighted homogeneous polynomials admit contact structures of Sasaki type.
\end{theorem}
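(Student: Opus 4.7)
The plan is to exhibit the Sasakian structure on $L^*(\bfw,d)$ by realizing this link as the Sasakian base of a K\"ahler cone --- namely the smooth complex subvariety $C_f\setminus\{0\}\subset\bbc^{n+1}\setminus\{0\}$ equipped with a weighted K\"ahler metric inherited from the ambient K\"ahler cone on $\bbc^{n+1}\setminus\{0\}$.

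The first step is to fix a weighted K\"ahler cone structure on $\bbc^{n+1}\setminus\{0\}$. Let
$$\Psi_\bfw = \sum_{i=0}^n w_i(x_i\d_{x_i} + y_i\d_{y_i}), \qquad R_\bfw = \sum_{i=0}^n w_i(x_i\d_{y_i} - y_i\d_{x_i})$$
be the generators of the weighted $\bbr^+$ and $S^1$ parts of the $\bbc^*(\bfw)$ action, and choose any smooth strictly plurisubharmonic function $K_\bfw$ on $\bbc^{n+1}\setminus\{0\}$ that is homogeneous of degree $2$ under $\Psi_\bfw$ (for instance $K_\bfw = \sum_i |z_i|^{2/w_i}$). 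Then $\omega_\bfw = \tfrac{i}{2}\d\db K_\bfw$ is a K\"ahler form making $(\bbc^{n+1}\setminus\{0\},\omega_\bfw,I)$ into a K\"ahler cone with Liouville field $\Psi_\bfw$, and the level set $M_\bfw = \{K_\bfw = 1\}$, diffeomorphic to $S^{2n+1}$ via the $\Psi_\bfw$-flow, inherits a weighted Sasakian structure $(R_\bfw,\eta_\bfw,\Phi_\bfw,g_\bfw)$ with $\eta_\bfw = (\iota_{\Psi_\bfw}\omega_\bfw)|_{M_\bfw}$ and $\Phi_\bfw$ induced from $I$.

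The second step uses the weighted homogeneity \eqref{whp} of $f$, which gives the Euler identity $\Psi_\bfw f = d\,f$; this vanishes on $C_f = \{f=0\}$, so $\Psi_\bfw$ (and likewise $R_\bfw$) is tangent to $C_f$. The isolated-singularity hypothesis makes $C_f\setminus\{0\}$ a smooth $n$-dimensional complex submanifold of $\bbc^{n+1}\setminus\{0\}$, and the restriction $\omega_\bfw|_{C_f\setminus\{0\}}$ is a K\"ahler form on a $\Psi_\bfw$-invariant complex submanifold. Hence $(C_f\setminus\{0\},\omega_\bfw|_{C_f\setminus\{0\}})$ is itself a K\"ahler cone with Liouville field $\Psi_\bfw$ and Reeb field $R_\bfw$, whose base is $L^*(\bfw,d) = C_f\cap M_\bfw$, diffeomorphic via the $\Psi_\bfw$-flow to the standard link $C_f\cap S^{2n+1}$. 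The K\"ahler cone / Sasakian manifold correspondence recalled in Section~\ref{defsassec} then endows $L^*(\bfw,d)$ with the Sasakian structure whose Reeb vector field, contact 1-form, transverse complex structure and metric are the restrictions of $R_\bfw,\eta_\bfw,\Phi_\bfw,g_\bfw$; the contact condition $\eta\wedge(d\eta)^{n-1}\neq 0$ on the $(2n-1)$-dimensional link follows from the K\"ahler cone condition on the $n$-dimensional cone $C_f\setminus\{0\}$.

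The only place requiring care is the smoothness and transversality underlying this K\"ahler sub-cone construction: one must verify that $\Psi_\bfw$ is tangent to $C_f\setminus\{0\}$ and transverse to $M_\bfw$. The first is immediate from $\Psi_\bfw f = d\,f$ vanishing on $C_f$; the second from $\Psi_\bfw K_\bfw = 2K_\bfw > 0$ off the origin. This is the main, and essentially only, technical obstacle; once it is in hand the remaining Sasakian axioms --- integrability of the transverse CR structure, $R_\bfw$ being Killing, and the contact condition --- all propagate automatically from the ambient K\"ahler cone structure, and $I$-invariance of $\cald_L = \ker(\eta_\bfw|_L)$ is immediate from $C_f\setminus\{0\}$ being a complex submanifold.
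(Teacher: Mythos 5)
The paper does not actually prove this statement: it simply cites the classical literature (Abe, Takahashi, Sasaki--Hsu, Lutz--Meckert) and records that the Sasakian structure on $L^*(\bfw,d)$ is the restriction of the weighted Sasakian structure on $S^{2n+1}$ with Reeb field $R_\bfw=\sum_jw_jH_j$. Your K\"ahler-cone construction is precisely the argument underlying those references, and its skeleton is sound: the weighted Euler identity $\sum_iw_iz_i\d_{z_i}f=d\,f$ shows $\Psi_\bfw$ and $R_\bfw=I\Psi_\bfw$ are tangent to $C_f\setminus\{0\}$, which is a smooth complex submanifold by the isolated-singularity hypothesis, and a $\Psi_\bfw$-invariant complex submanifold of a K\"ahler cone is again a K\"ahler cone, so its base is Sasakian. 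So in substance you are giving the proof the paper delegates to its references.

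There is, however, one genuine gap, and it sits exactly at the point you identify as ``the only place requiring care'' but then dismiss. The proposed potential $K_\bfw=\sum_i|z_i|^{2/w_i}$ is \emph{not} smooth on $\bbc^{n+1}\setminus\{0\}$ when some $w_i>1$: the term $|z_i|^{2/w_i}=(z_i\bar z_i)^{1/w_i}$ fails to be differentiable along the hyperplane $z_i=0$ (minus the origin) unless $w_i=1$, and nontrivial weights are the whole point here. Nor is the existence of a smooth, strictly plurisubharmonic, degree-$2$ $\Psi_\bfw$-homogeneous potential something you can just ``choose''; it needs an argument. The standard repairs are: (a) dispense with the global potential and instead perform a type II deformation of the round sphere's standard Sasakian structure, setting $\eta_\bfw=\eta_0/\eta_0(R_\bfw)$ with $\eta_0=\sum_i(x_idy_i-y_idx_i)$ and $\eta_0(R_\bfw)=\sum_iw_i|z_i|^2>0$; this keeps the standard CR structure on $S^{2n+1}$, has Reeb field $R_\bfw$, and restricts to the link because $R_\bfw$ is tangent to $C_f$ and $C_f$ is complex --- this is exactly the ``restriction of the weighted Sasakian structure on the sphere'' the paper invokes; or (b) replace your potential by $\bigl(\sum_i|z_i|^{2m/w_i}\bigr)^{1/m}$ with $m=\lcm_iw_i$, which is smooth away from the origin, and verify positivity of $d\eta_\bfw$ only on the contact distribution of the link rather than strict plurisubharmonicity on all of $\bbc^{n+1}\setminus\{0\}$. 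With either repair the rest of your argument goes through verbatim.
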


In fact a link $L^*(\bfw,d)$ has a natural Sasakian structure $\cals_\bfw=(R_\bfw,\eta_\bfw,\Phi_\bfw,g_\bfw)$ which is simply the restriction of the weighted Sasakian on the sphere $S^{2n+1}$ with Reeb vector field $R_\bfw=\sum_jw_jH_j$ where $H_j=-i(z_j\d_{\color{red}z_j}-\bar{z}_j\d_{\bar{z}_j})$.

Of special interest to us are the Brieskorn-Pham polynomials which are polynomials of the form 
\begin{equation}\label{BPpoly}
f(\bfz)=z_0^{a_0}+\cdots +z_n^{a_n}.
\end{equation}
Here the exponent vector $\bfa=(a_0,a_1,\ldots,a_n)\in (\bbz^+)^{n+1}$ is related to the weights by $a_iw_i=d$ for all $i=0\cdots,n$. In this case we write the link as 
$$L(\bfa)=\{f(\bfz)=0\}\cap S^{2n+1}$$
and refer to it as a Brieskorn manifold. We also require that $a_j\geq 2$ since a linear term in the polynomial gives rise to the standard sphere with its standard contact structure.

Note that the quotient of $L^*(\bfw,d)$ by the weighted circle action $(z_0,\cdots,z_n)\mapsto (\grl^{w_0}z_0,\cdots,\grl^{w_n}z_n)$ with $|\grl|=1$ is a projective algebraic orbifold $\calz_\bfw$. Equivalently, $\calz_\bfw=(C_f\setminus \{0\})/\bbc^*(\bfw)$.
We denote by $\Theta_\calz$ the sheaf of germs of holomorphic vector fields on $\calz$.

\begin{lemma}\label{poslem}
Let $L^*(\bfw,d)$ be the link of a weighted hypersurface singularity of degree $d$ with $n\geq 3$, weight vector $\bfw$, and with an isolated singularity at the origin. Let $\cals_\bfw$ be the naturally induced Sasakian structure on $L^*(\bfw,d)$. 
\begin{enumerate}
\item Then any deformation of the transverse holomorphic structure of $\cals_\bfw$ remains Sasakian. 
\item Moreover, if $\cals_\bfw$ is positive, the deformation remains positive.
\item If $w_i<\frac{d}{2}$ for all but one of the weights and $H^2(\calz_{\bfw,},\Theta_{\calz_\bfw})=0$, then the component of $\gM^c_{+,0}(L^*(\bfw,d))$ containing $\bar{\cals}_\bfw$ is locally modeled on the complex vector space $H^1(\calz_{\bfw,},\Theta_{\calz_\bfw})$.
\end{enumerate}
\end{lemma}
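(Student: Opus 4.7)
The plan is to reduce all three items to deformation theory on the quotient orbifold $\calz_\bfw=(C_f\setminus\{0\})/\bbc^*(\bfw)$. The natural Sasakian structure $\cals_\bfw$ is quasi-regular: its Reeb foliation $\calf_{R_\bfw}$ has orbifold leaf space $\calz_\bfw$, the transverse holomorphic structure corresponds bijectively to the complex structure on $\calz_\bfw$, and $d\eta_\bfw$ descends to an orbifold K\"ahler form polarized by the orbifold line bundle dual to $\calf_{R_\bfw}$ (cf.~Chapters 6--7 of \cite{BG05}). Under this dictionary, deforming the transverse holomorphic structure of $\cals_\bfw$ through Sasakian structures is the same datum as deforming the polarized complex orbifold $\calz_\bfw$.

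For (1), I would invoke the orbifold version of the Kodaira-Spencer stability theorem: a small deformation of the complex structure on a compact K\"ahler orbifold is again K\"ahler, with the K\"ahler class varying continuously, so the rational polarization is preserved. The orbifold Boothby-Wang construction then reproduces a Sasakian structure on $L^*(\bfw,d)$, which is what fails in the Nozawa example once quasi-regularity is relaxed. For (2), positivity of $c_1(\calf_{R_\bfw})$ is equivalent to $\calz_\bfw$ being Fano, i.e.\ $c_1^{orb}(\calz_\bfw)>0$; since $c_1$ varies continuously in $H^2(\calz_\bfw,\bbr)$ under a small complex deformation and the positive cone in $H^{1,1}_\bbr$ is open, the Fano property persists in a neighborhood of $\cals_\bfw$.

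For (3), apply Kodaira-Spencer theory of compact complex orbifolds (\S8.2.1 of \cite{BG05}): infinitesimal deformations of $\calz_\bfw$ are classified by $H^1(\calz_\bfw,\Theta_{\calz_\bfw})$ with obstructions in $H^2(\calz_\bfw,\Theta_{\calz_\bfw})$. Vanishing of $H^2$ makes the Kuranishi space smooth of complex dimension $\dim_\bbc H^1(\calz_\bfw,\Theta_{\calz_\bfw})$. By (1) and (2), each nearby orbifold complex structure yields a positive Sasakian structure on $L^*(\bfw,d)$, and $c_1(\cald)=0$ is preserved as a purely topological condition. The weight hypothesis that $w_i<d/2$ for all but one index is used to rigidify the Reeb choice: combined with $n\geq 3$ it forces $R_\bfw$ to be, up to the identity component of the group of transverse biholomorphisms of $\calf_{R_\bfw}$, the unique Reeb field compatible with the given polarization, so that distinct Kuranishi points descend to inequivalent elements of $\gM^c_{+,0}$ after the quotient by that group (cf.\ the Nitta-Sekiya theorem cited in Section \ref{defsassec}).

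The hardest step is this last one: showing that the Kuranishi slice of orbifold complex structures maps \emph{isomorphically} onto a neighborhood of $\bar{\cals}_\bfw$ in the moduli space $\gM^c_{+,0}(L^*(\bfw,d))$, rather than only onto a neighborhood in the pre-moduli $\gP\gM^c_\cals$. This amounts to controlling the identity component of the transverse biholomorphism group and verifying injectivity of the induced map on orbits; the weight-asymmetry condition, the Nitta-Sekiya identification, and the vanishing $H^2(\calz_\bfw,\Theta_{\calz_\bfw})=0$ (preventing jumps in automorphisms that would make the moduli non-Hausdorff) combine to supply a well-defined local homeomorphism from an open neighborhood of $0$ in $H^1(\calz_\bfw,\Theta_{\calz_\bfw})$ onto the desired component of $\gM^c_{+,0}(L^*(\bfw,d))$.
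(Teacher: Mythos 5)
Your reduction to the quotient orbifold is a reasonable framework, but it diverges from the paper's argument and leaves the decisive step of (3) unproved. For (1) the paper does not pass through orbifold Kodaira--Spencer stability and Boothby--Wang at all: it observes that $L^*(\bfw,d)$ is simply connected for $n\geq 3$ and quotes Nozawa's theorem on deformations of transverse holomorphic structures directly. Your route has a genuine hole: Kodaira--Spencer stability gives that the deformed orbifold is K\"ahler, but to reconstruct a Sasakian structure you need the original integral polarization class to remain of type $(1,1)$ on the deformed complex structure. That is automatic when $H^2(\calz_\bfw,\calo_{\calz_\bfw})=0$, but part (1) makes no positivity assumption, and ``the K\"ahler class varies continuously'' does not address the type condition. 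For (2) your openness-of-the-Fano-cone argument is serviceable for small deformations, whereas the paper gets the stronger statement from Proposition 9.2.4 of \cite{BG05} (which gives $c_1(\cald_\bfw)=0$) together with Corollary 7.5.26 of \cite{BG05}, which rules out type change on an entire connected component when $c_1(\cald)$ is torsion.

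The genuine gap is in (3). You correctly identify the hardest step --- that the Kuranishi slice should map onto a neighborhood in $\gM^c_{+,0}$ rather than merely in the pre-moduli --- but you then assert that the weight condition, Nitta--Sekiya, and $H^2=0$ ``combine to supply'' this, without exhibiting the mechanism. You also assign the wrong role to the hypothesis $w_i<\frac{d}{2}$: it is not there to rigidify the choice of Reeb field. In the paper it is used, via Proposition 37 and Remark 38 of \cite{BGK05}, to show that the automorphism group of $\calz_\bfw$ is discrete, hence $H^0(\calz_\bfw,\Theta_{\calz_\bfw})=0$. It is this vanishing, together with $H^2(\calz_\bfw,\Theta_{\calz_\bfw})=0$, that makes the Kuranishi space of transverse holomorphic deformations locally modeled on $H^1(\calz_\bfw,\Theta_{\calz_\bfw})$ with no residual quotient by a positive-dimensional automorphism group (Proposition 8.2.6 of \cite{BG05}). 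The Nitta--Sekiya theorem concerns uniqueness of Sasaki--Einstein metrics within a fixed class and plays no role in this lemma; invoking it does not close the gap.
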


\begin{proof}
Since a link $L^*(\bfw,d)$ with $n\geq 3$ is simply connected, (1) follows from \cite{Noz14}. For (2) Proposition 9.2.4 of \cite{BG05} implies that $c_1(\cald_\bfw)=0$ and then Corollary 7.5.26 of \cite{BG05} implies that type change cannot occur which implies (2). To prove (3) we note that Proposition 37 and Remark 38 of \cite{BGK05} implies that the group of automorphisms is discrete\footnote{Contrary to the statement in \cite{BGK05}, there is a case, namely when $n=3$ and $|\bfw|=d$, that the automorphism group is not a finite group, but an infinite discrete group. See Lemma 5.5.3 of \cite{BG05} and the corresponding references there.}, hence, $H^0(\calz,\Theta_\calz)=0$, so the Kuranishi space of deformations of transverse holomorphic foliations is locally modeled on $H^1(\calz_{\bfw,},\Theta_{\calz_\bfw})$ by Proposition 8.2.6 of \cite{BG05}. 
\end{proof}

Let $\gM^{WH,c}_{+,0}(M)$ denote the subset of $\gM^c_{+,0}(M)$ consisting of Sasaki classes that can be represented as the link of a weighted homogeneous hypersurface. It is well known that a Sasaki class that can be represented as a weighted homogeneous hypersurface (or more generally a complete intersection) must have $c_1(\cald)=0$ and whose type is either positive, negative, or null. It is also known that being represented by a weighted homogeneous hypersurfaces places topological restrictions on $M$. For example, such an $(2n+1)$-manifold must be $n-1$ connected.

Here we are interested only in those of positive type. Similarly $\gM^{WHSE}(M)$ denotes the subset of $\gM^{SE}(M)$ that can be represented by a weighted homogeneous hypersurface. However, we need to exclude the standard contact structure $\cald_0$ on the sphere $S^{2n+1}$. So when $M=S^{2n+1}$ an element of $\gM^{WHSE}(M)$ corresponds to an exotic contact structure.

The problem of giving sufficient numerical conditions for the existence of Sasaki-Einstein (K\"ahler-Einstein orbifold) metrics has been well studied. Generally there are no known numerical conditions which are both necessary and sufficient. Here we give sufficient conditions for Brieskorn manifolds.

\begin{theorem}\label{SEcond}
Let $L(\bfa)$ be the link of a Brieskorn hypersurface. Then if either one of the following two conditions hold $L(\bfa)$ admits a Sasaki-Einstein metric:
\begin{eqnarray*}
1< \sum_{i=0}^n\frac{1}{a_i} & <  & 1+\frac{n}{n-1}{\rm min}_{i,j}\bigg\{\frac{1}{a_i},\frac{1}{b_ib_j}\bigg\} \\
1< \sum_{i=0}^n\frac{1}{a_i} & <  & 1+\frac{n}{n-1}{\rm min}_i\bigg(\frac{1}{a_i}\bigg){\rm max}_j\bigg(\frac{1}{a_j}\bigg),
\end{eqnarray*}
where $b_i=\gcd(\lcm_{j\neq i}a_j,a_i)$
\end{theorem}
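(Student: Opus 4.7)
The plan is to reduce the problem to the existence of an orbifold Kähler–Einstein metric on the base of the natural quasi-regular Sasakian structure on $L(\bfa)$. Set $d=\lcm(a_0,\dots,a_n)$ and $w_i=d/a_i$, so that $\calz_\bfw=\{f=0\}/\bbc^*(\bfw)$ is a degree-$d$ hypersurface in the weighted projective space $\bbp(\bfw)$. By the Sasaki/Kähler orbifold correspondence (cf.~Chapter 11 of \cite{BG05}), a positive orbifold Kähler–Einstein metric on $\calz_\bfw$ lifts, after a transverse homothety of the Reeb field that realigns the Einstein constant, to a Sasaki–Einstein metric on $L(\bfa)$. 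The hypothesis $\sum_i 1/a_i>1$ is exactly $|\bfw|>d$, which says that the orbifold anticanonical class $-K^{\mathrm{orb}}_{\calz_\bfw}$ is ample. Thus $\calz_\bfw$ is a Fano orbifold with Fano index $I=\sum_i 1/a_i-1$, and the problem becomes the existence of a positive KE metric on this Fano.

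For this I would invoke the orbifold form of Tian's alpha-invariant criterion: since $\dim_{\bbc}\calz_\bfw=n-1$, an orbifold KE metric exists provided the global log canonical threshold satisfies $\alpha(\calz_\bfw)>\frac{n-1}{n}$ (see \cite{BGK05,BG05}). Hence everything reduces to a sufficient lower bound on $\alpha(\calz_\bfw)$. The Demailly–Kollár estimate for the log canonical threshold of a quasi-smooth weighted hypersurface takes the form
\[
\alpha(\calz_\bfw)\;\geq\;\frac{\eta(\bfa)}{I},
\]
where $\eta(\bfa)$ is an explicit singularity invariant of the polynomial $f$. In the first case one takes $\eta(\bfa)=\min_{i,j}\{1/a_i,\,1/(b_ib_j)\}$: the $1/a_i$ term bounds the multiplicity of sections of $\calo_{\bbp(\bfw)}(d)$ along the coordinate divisors $\{z_i=0\}$, while the refined term $1/(b_ib_j)$ captures the effect of the cyclic isotropy subgroups of $\bbp(\bfw)$ along the codimension-two strata, $b_i=\gcd(\lcm_{j\neq i}a_j,a_i)$ being the order of the generic stabilizer on the $z_i$-axis. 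In the second case one uses the coarser bound $\eta(\bfa)=\min_i(1/a_i)\max_j(1/a_j)$, which only exploits the extreme weights but is easier to verify. In both cases, rearranging $\eta(\bfa)/I>(n-1)/n$ yields $I<\frac{n}{n-1}\eta(\bfa)$, which is exactly the upper bound stated in the theorem.

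The main technical obstacle is establishing the alpha-invariant estimate itself: one must show that for every effective $\bbq$-divisor $D\sim_{\bbq}-K^{\mathrm{orb}}_{\calz_\bfw}$ and every point $p\in\calz_\bfw$, the pair $(\calz_\bfw,\tfrac{\eta(\bfa)}{I}D)$ is klt at $p$. This is a stratum-by-stratum computation on $\calz_\bfw\subset\bbp(\bfw)$: at smooth orbifold points one applies the standard Howald-type bound for weighted homogeneous singularities, while along the singular strata of $\bbp(\bfw)$ one combines the orbifold adjunction formula with the Demailly–Kollár analysis of how a section of $\calo_{\bbp(\bfw)}(d)$ can concentrate along a coordinate subspace — and this is precisely where the arithmetic invariants $b_i$ enter. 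Once these pointwise log canonical threshold bounds are assembled, the two numerical hypotheses translate directly into $\alpha(\calz_\bfw)>(n-1)/n$, Tian's criterion produces the orbifold KE metric on $\calz_\bfw$, and the Sasaki/Kähler correspondence produces the desired Sasaki–Einstein metric on $L(\bfa)$.
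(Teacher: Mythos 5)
Your proposal is correct and follows essentially the same route as the paper, which simply defers to \cite{BGK05} for the first inequality and to \cite{BN09} for the second: those references prove the statement exactly as you outline, by passing to the Fano orbifold $\calz_\bfw\subset\bbp(\bfw)$, invoking the orbifold Demailly--Koll\'ar/Tian criterion $\alpha(\calz_\bfw)>\tfrac{n-1}{n}$, and establishing the log canonical threshold bound by multiplicity estimates along the coordinate strata where the invariants $b_i$ enter. The only caveat is that your sketch, like the paper, leaves the stratum-by-stratum klt verification to the cited sources rather than carrying it out.
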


\begin{proof}
The proof of the first condition is given in \cite{BGK05} while that of the second is given for $n=3$ in \cite{BN09}. The extension of the latter to arbitrary $n$ is straightforward.
\end{proof}

In the special case when the components of $\bfa$ are pairwise relatively prime Ghigi and Koll\'ar \cite{GhKo05} have given a better estimate. Indeed owing to the Lichnerowicz bound obtained by Gauntlett, Martelli, Sparks, and Yau \cite{GMSY06} this bound is sharp.

\begin{theorem}\label{GhKobound}
Let $L(\bfa)$ be the link of a Brieskorn hypersurface. Assume further that the components $a_0,\cdots,a_n$ of $\bfa$ are pairwise relatively prime. Then $L(\bfa)$ admits a Sasaki-Einstein metric if and only if 
$$1< \sum_{i=0}^n\frac{1}{a_i} <  1+n~{\rm min}_{i}\bigg\{\frac{1}{a_i}\bigg\}.$$
\end{theorem}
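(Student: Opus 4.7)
The plan is to prove the two directions separately. Under the pairwise coprime hypothesis, set $d = a_0 a_1 \cdots a_n$ and $w_i = d/a_i$, so that $L(\bfa)$ carries the weighted Sasakian structure with Reeb $R_\bfw$ whose K\"ahler quotient is the degree-$d$ orbifold hypersurface $\calz_\bfw \subset \bbp(\bfw)$. The two inequalities in the theorem then read $0 < |\bfw| - d < n\min_i w_i$.

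For sufficiency I would invoke Ghigi-Koll\'ar~\cite{GhKo05}. The condition $|\bfw| > d$ is exactly the Fano condition $c_1^{orb}(\calz_\bfw) > 0$, so the associated Sasakian structure is positive. The upper bound $|\bfw| - d < n\min_i w_i$ is the numerical input under which~\cite{GhKo05} establishes the $\alpha$-invariant estimate $\alpha(\calz_\bfw) > n/(n+1)$. Tian's orbifold $\alpha$-invariant criterion then produces a K\"ahler-Einstein metric on $\calz_\bfw$, which lifts to a Sasaki-Einstein metric on $L(\bfa)$ via the standard K\"ahler-Einstein/Sasaki-Einstein correspondence (cf.\ Chapter~11 of~\cite{BG05}).

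For necessity I would apply the Lichnerowicz-type bound of Gauntlett-Martelli-Sparks-Yau~\cite{GMSY06}: on a Sasaki-Einstein $(2n+1)$-manifold normalized so that $\Ric_g = 2n\, g$, every holomorphic function on the K\"ahler cone which is a scaling eigenfunction of weight $\lambda > 0$ satisfies $\lambda \geq 1$, with equality only if the cone splits off a $\bbc$-factor. Under the pairwise coprime hypothesis, the affine variety $C_f$ admits, up to positive rescaling, a unique Reeb direction compatible with its complex structure, namely $R_{SE} = c R_\bfw$ for some $c > 0$. The Einstein normalization pins down $c = n/(|\bfw| - d)$, which in particular forces $|\bfw| > d$ and delivers the lower bound. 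The restriction of $z_i$ to $C_f$ is then a scaling eigenfunction of weight $c w_i$, and the Lichnerowicz inequality $c w_i \geq 1$ for all $i$ rearranges to $|\bfw| - d \leq n\min_i w_i$, i.e., $\sum 1/a_i \leq 1 + n\min_i(1/a_i)$.

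The main obstacle is establishing strictness in the upper inequality. An equality $c w_{i_0} = 1$ would, by the equality case of the Lichnerowicz estimate, force the K\"ahler cone $C_f$ to split off the $\bbc$-factor spanned by $z_{i_0}$, meaning that $f$ would be independent of $z_{i_0}$; this contradicts the standing hypotheses that $f$ has only an isolated singularity at the origin and contains no linear monomial $z_{i_0}$. Making this rigorous requires careful use of the equality case in~\cite{GMSY06}. The sufficiency side is largely a citation, but the bookkeeping translating the $\alpha$-invariant bound of~\cite{GhKo05} into exactly the stated numerical range must be checked against their computation.
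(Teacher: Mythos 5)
Your proposal follows exactly the route the paper takes: the paper offers no independent proof, but attributes sufficiency to Ghigi--Koll\'ar \cite{GhKo05} and the sharpness of the upper bound to the Lichnerowicz obstruction of \cite{GMSY06} (stated as Theorem \ref{Lichbd}), which is precisely your two-directional argument. Your additional care about the strict (rather than non-strict) upper inequality via the equality case of the Lichnerowicz estimate is a genuine refinement of what Theorem \ref{Lichbd} alone gives, and is the correct way to close that gap.
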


For any link of a weighted homogeneous hypersurface, the result of \cite{GMSY06} is: 

\begin{theorem}\label{Lichbd}
Let $L^*(\bfw,d)$ be a smooth link of a weighted homogeneous hypersurface. If the inequality $|\bfw|-d> n~{\rm min}_iw_i$ holds, then $L^*(\bfw,d)$ cannot admit a Sasaki-Einstein metric.
\end{theorem}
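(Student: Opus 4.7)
The plan is to apply the Lichnerowicz-type obstruction of Gauntlett--Martelli--Sparks--Yau. Suppose for contradiction that $L=L^*(\bfw,d)$ carries a Sasaki-Einstein metric. Since $L\subset S^{2n+1}$ has real dimension $2n-1$, the Einstein condition reads $\Ric=(2n-2)g$, so the Lichnerowicz--Obata theorem forces every nonzero eigenvalue of the Laplacian on functions to satisfy $\lambda_1(\Delta)\ge 2n-1$.

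Next I would invoke the standard correspondence between homogeneous holomorphic functions on the K\"ahler cone $C(L)=\{f=0\}\setminus\{0\}\subset\bbc^{n+1}$ and Laplace eigenfunctions on $L$: if $h$ is holomorphic on $C(L)$ of charge $\lambda$ under the SE Reeb field $R_{SE}$ (equivalently, $r\partial_r h=\lambda h$), then $\mathrm{Re}(h|_L)$ and $\mathrm{Im}(h|_L)$ are eigenfunctions of $\Delta_L$ with eigenvalue $\lambda(\lambda+2(n-1))$. Combining this with the Lichnerowicz bound and solving the quadratic $\lambda^2+(2n-2)\lambda-(2n-1)\ge 0$ for $\lambda>0$ yields the universal charge bound $\lambda\ge 1$.

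I would then compute the SE charges of the coordinate functions $z_i$. The Reeb field of a putative SE structure must be a positive multiple $R_{SE}=c\,R_\bfw$ of the natural weighted Reeb $R_\bfw=\sum_j w_j H_j$, under which $z_i$ carries charge $w_i$. The rescaling $c$ is pinned down by the Calabi-Yau condition on the cone: the Poincar\'e residue $\Omega_f=\mathrm{Res}(dz_0\wedge\cdots\wedge dz_n/f)$ is a nowhere-vanishing holomorphic $(n,0)$-form on the smooth locus of the hypersurface and carries $R_\bfw$-charge $|\bfw|-d$, whereas Ricci-flatness of the cone K\"ahler metric forces its $R_{SE}$-charge to equal the cone's complex dimension $n$. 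Hence $c=n/(|\bfw|-d)$ and the coordinate charge is $\lambda_i=nw_i/(|\bfw|-d)$.

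Applying the universal bound $\lambda_i\ge 1$ for every $i$ now produces $|\bfw|-d\le n\min_i w_i$; the contrapositive is the stated inequality. The main delicate step is the SE normalization $c=n/(|\bfw|-d)$, which rests on the Gorenstein structure of the isolated hypersurface singularity to produce $\Omega_f$ and on recasting Ricci-flatness as a charge condition on the holomorphic volume form; once that is in hand, the remaining steps are immediate quadratic arithmetic.
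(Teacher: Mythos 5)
Your argument is correct and is precisely the Lichnerowicz obstruction of Gauntlett--Martelli--Sparks--Yau: the paper gives no proof of this theorem, simply citing \cite{GMSY06}, and your reconstruction (Lichnerowicz--Obata bound $\lambda_1\ge 2n-1$, the charge--eigenvalue relation $\lambda(\lambda+2n-2)$ for holomorphic functions on the cone, and the normalization $c=n/(|\bfw|-d)$ of the Reeb field via the charge of the Poincar\'e residue) is exactly the cited argument. The only point worth making explicit is that ``Sasaki--Einstein metric'' here means one compatible with the natural polarization, so that the Reeb field is indeed a positive multiple of $R_\bfw$, which is the convention in both \cite{GMSY06} and this paper.
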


\subsection{Perturbations of Brieskorn Manifolds}
In order to study moduli we consider perturbations of the Brieskorn polynomials by a weighted homogeneous polynomial $p$ of degree $d$. We now consider the hypersurface singularity $C_{f+p}$ and require that the intersection of its zero locus with any number of hyperplanes $(z_i=0)$ is smooth outside of the origin in $\bbc^{n+1}$. With this the corresponding links $L_f$ and $L_{f+p}$ are diffeomorphic and the corresponding contact structures are equivalent by Gray's Theorem. The polynomial $p$ consists of monomials of the form $z_0^{b_0}\cdots z_n^{b_n}$ where $0\leq b_j<a_j$ and $\sum_jb_jw_j=d$. Now let us consider the projective complex orbifolds, $C_f/\bbc^*(\bfw)$ and $C_{f+p}/\bbc^*(\bfw)$. By the Grothendieck-Lefschetz Theorem \cite{sga2}, it follows that any isomorphism of $C_f/\bbc^*(\bfw)$ and $C_{f+p}/\bbc^*(\bfw)$ is induced from an isomorphism of $\bbc^{n+1}$ that commutes with $\bbc^*(\bfw)$. As discussed in \cite{BGK05} such elements form a group, denoted by $\gA\gu\gt(\bbc^{n+1},\calo(\bfw))$ with dimension $\sum_ih^0(\bbc\bbp(\bfw),\calo(w_i))$. Here $h^0(\bbc\bbp(\bfw),\calo(m))$ denotes the complex dimension of the complex vector space $H^0(\bbc\bbp(\bfw),\calo(m))$ of holomorphic sections of the orbibundle $\calo(m)$. On the other hand the added polynomial gives a space of dimension $h^0(\bbc\bbp(\bfw),\calo(d))$, so we have arrived at

\begin{proposition}\label{hyslocmod}
Let $M$ be a smooth manifold represented by the link $L(\bfa)$ of a Brieskorn hypersurface that admits a Sasaki-Einstein metric. Assume further that the components of the exponent vector $\bfa$ has at most one $2$. Then near $L(\bfa)$ the moduli space $\gM^{WHSE}(M)$ has complex dimension 
$$h^0(\bbc\bbp(\bfw),\calo(d))-\sum_ih^0(\bbc\bbp(\bfw),\calo(w_i)).$$
\end{proposition}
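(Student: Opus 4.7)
The plan is a direct dimension count: identify a local chart of $\gM^{WHSE}(M)$ around the class of $L(\bfa)$ with the quotient of the space of degree-$d$ weighted homogeneous polynomial perturbations of $f$ by the action of the weighted coordinate-change group $\gA\gu\gt(\bbc^{n+1},\calo(\bfw))$, and then subtract dimensions.

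First, following the setup in the paragraph preceding the proposition, every nearby class in $\gM^{WH,c}_{+,0}(M)$ is represented by the link of a perturbed hypersurface $C_{f+p}$, where $p$ ranges over the complex vector space $H^0(\bbc\bbp(\bfw),\calo(d))$ of dimension $h^0(\bbc\bbp(\bfw),\calo(d))$. Next, by the Grothendieck-Lefschetz argument quoted above, two perturbations $f+p$ and $f+p'$ yield isomorphic projective orbifolds $C_{f+p}/\bbc^*(\bfw)$ and therefore equivalent Sasakian structures on the link if and only if they are related by an element of $\gA\gu\gt(\bbc^{n+1},\calo(\bfw))$, a group of complex dimension $\sum_i h^0(\bbc\bbp(\bfw),\calo(w_i))$. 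The theorem of Nitta-Sekiya recalled in Section \ref{defsassec} then ensures that two equivalent orbifolds represent the same point of $\gM^{WHSE}(M)$, since the SE representative in a given positive Sasaki class is unique up to the connected component of the transverse holomorphic automorphism group.

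To turn these two numbers into the dimension of a quotient, I would verify that the $\gA\gu\gt$-action has generic orbits of the expected top dimension. The assumption that the exponent vector $\bfa$ contains at most one $2$ translates to $w_i<d/2$ for all but one index $i$, which is the numerical hypothesis of Lemma \ref{poslem}(3). As in the proof of that lemma, Proposition 37 and Remark 38 of \cite{BGK05} then force the automorphism group of $\calz_\bfw$ to be discrete, so the $\gA\gu\gt(\bbc^{n+1},\calo(\bfw))$-stabilizer of $f$ is discrete and the orbit through $f$ has complex codimension exactly
$$h^0(\bbc\bbp(\bfw),\calo(d))-\sum_i h^0(\bbc\bbp(\bfw),\calo(w_i))$$
inside the perturbation space. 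A transverse slice of this dimension gives a local chart of $\gM^{WH,c}_{+,0}(M)$ at $\bar{\cals}_\bfa$ by Lemma \ref{poslem}(1)-(2), the Sasakian and positivity properties being preserved under small deformation.

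The main obstacle I anticipate is passing from the dimension of the positive-Sasaki slice to the dimension of $\gM^{WHSE}(M)$: one must show that SE existence is open along the slice, so that the predicted dimension is actually realized rather than cut down by a stability condition. For the Brieskorn-Pham setting this is handled by observing that the sufficient numerical criteria of Theorems \ref{SEcond} and \ref{GhKobound} depend only on the weight vector $\bfw$, hence are stable under the perturbations considered here; the hypothesis that $L(\bfa)$ admits an SE metric, together with the ``at most one $2$'' assumption that keeps us inside the Fano positive-type stratum, then propagates SE existence across the whole neighborhood. The claimed complex dimension follows.
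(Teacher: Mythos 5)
Your proposal is correct and follows essentially the same route as the paper: the paper's argument is precisely the dimension count in the paragraph preceding the proposition, namely the space of degree-$d$ perturbations of dimension $h^0(\bbc\bbp(\bfw),\calo(d))$ modulo the group $\gA\gu\gt(\bbc^{n+1},\calo(\bfw))$ of dimension $\sum_ih^0(\bbc\bbp(\bfw),\calo(w_i))$, identified via the Grothendieck--Lefschetz theorem. The extra points you supply --- discreteness of the stabilizer via the ``at most one $2$'' hypothesis, and openness of SE existence along the slice --- are left implicit in the paper but are consistent with its use of Lemma \ref{poslem}(3) and the results of \cite{BGK05}.
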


\begin{remark}\label{SE2}
It is interesting to note that until recently all known examples of Brieskorn manifolds $L(\bfa)$ that admit a Sasaki-Einstein metric with the exception of a quadric have at most one exponent equal to $2$. However, there was a controversy concerning the link $L(2,2,2,3)$. This does not appear on the list of cohomogeneity one Sasaki-Einstein manifolds given in \cite{Con07} implying that it does not admit an SE metric. On the other hand in a recent paper Li and Sun \cite{LiSu14} claim that it does. This controversy has been recently resolved by Chi Li \cite{Li15} in favor of the Li-Sun result.  It can be noted that the BP links $L(2,2,2,k)$ for $k>3$ cannot admit SE metrics as they are obstructed by the Lichnerowicz obstruction Theorem \ref{Lichbd}. Nevertheless, there are an infinite number of links of the form $L(2,2,p,q)$ where the existence or non-existence of an SE metric is unknown.
\end{remark}

\begin{example}\label{S5SEex}
Applying Proposition \ref{hyslocmod} to $M=S^5$ the results of \cite{BGK05} and \cite{GhKo05} show that there are at least $80$ components of $\gM^{WHSE}(S^5)$ (excluding the round sphere SE metric) plus the Li-Sun case of Remark \ref{SE2} which gives $81$. However, there is an example of a family of SE metrics on $S^5$ that was not included in \cite{BGK05} since it does not satisfy the first estimate in Theorem \ref{SEcond}. This is given in Theorem \ref{newSE} below. So we know that $\gM^{WHSE}(S^5)$ has at least $82$ components. There are $55$ components consisting of single points, eighteen components of real dimension $2$, four components of real dimension $4$, one component of real dimension $6$, two components of real dimension $8$, one component of real dimension $10$, and one component of real dimension $20$. This last family arises from

\begin{theorem}\label{newSE}
The link $L(2,3,11,11)\approx S^5$ admits a $20$ parameter family of Sasaki-Einstein metrics.
\end{theorem}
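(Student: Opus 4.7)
The plan is to prove the theorem in three independent steps: identify the diffeomorphism type of the link, establish existence of a Sasaki-Einstein metric, and compute the dimension of the local moduli component.

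First I would identify $L(2,3,11,11)$ with $S^5$. The Brieskorn graph $G(\bfa)$ contains a single edge between the two ``$11$''-vertices, since $\gcd(11,11)=11$ while the other three pairs are coprime; in particular $G$ has two isolated vertices, and Brieskorn's graph theorem (cf.\ Chapter 9 of \cite{BG05}) identifies $L(2,3,11,11)$ as a homotopy $5$-sphere. Since there are no exotic smooth structures on $S^5$, this gives $L(2,3,11,11)\approx S^5$. As an independent check one verifies $H_2(L,\bbz)=0$ directly: the eigenvalue $1$ does not occur among the monodromy eigenvalues $-\zeta_3^{j_1}\zeta_{11}^{j_2+j_3}$ (so $b_2=0$), and combining $\prod_{k=0}^{10}(1+x\zeta_{11}^k)=1+x^{11}$ with $(1+\zeta_3)(1+\zeta_3^2)=1$ gives $|\Delta(1)|=1$ (so no torsion).

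Second, I would invoke Theorem \ref{SEcond}. With $n=3$ and $\bfa=(2,3,11,11)$,
$$\sum_{i=0}^3\frac{1}{a_i}=\frac{1}{2}+\frac{1}{3}+\frac{2}{11}=\frac{67}{66},\quad \min_i\frac{1}{a_i}=\frac{1}{11},\quad \max_j\frac{1}{a_j}=\frac{1}{2},$$
so the second upper bound in Theorem \ref{SEcond} is $1+\tfrac{3}{2}\cdot\tfrac{1}{22}=1+\tfrac{3}{44}$. Since $\tfrac{1}{66}<\tfrac{3}{44}$, the inequality $1<\tfrac{67}{66}<1+\tfrac{3}{44}$ holds and yields an SE metric on $L(2,3,11,11)$. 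The first bound of Theorem \ref{SEcond} narrowly fails ($\tfrac{3}{242}<\tfrac{1}{66}$), which explains why this family is absent from \cite{BGK05}; the Lichnerowicz obstruction of Theorem \ref{Lichbd} gives $|\bfw|-d=1<18=n\min_i w_i$ and is not violated.

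Third, I would apply Proposition \ref{hyslocmod}, whose hypothesis (at most one exponent equal to $2$) holds. The weight vector is $\bfw=(33,22,6,6)$ with $d=66$. Enumerating nonnegative integer solutions of $33b_0+22b_1+6b_2+6b_3=66$ produces $(2,0,0,0)$, $(0,3,0,0)$, and $(0,0,k,11-k)$ for $k=0,\ldots,11$, giving $h^0(\bbc\bbp(\bfw),\calo(66))=14$. The individual weight counts are $h^0(\calo(33))=1$ (just $z_0$), $h^0(\calo(22))=1$ ($z_1$), and $h^0(\calo(6))=2$ ($z_2,z_3$), summing to $6$. Proposition \ref{hyslocmod} then yields complex dimension $14-6=10$, i.e.\ $20$ real parameters. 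The most delicate step is the diffeomorphism identification via the Brieskorn graph theorem; the existence and dimension count are direct computations.
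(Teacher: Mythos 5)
Your overall strategy is exactly the paper's: the proof there consists of checking the second estimate of Theorem \ref{SEcond} and reading off the dimension from Proposition \ref{hyslocmod}. Your verification of existence is correct and more careful than the paper's (including the observations that the first estimate of Theorem \ref{SEcond} narrowly fails, which explains the family's absence from \cite{BGK05}, and that the Lichnerowicz bound of Theorem \ref{Lichbd} is not violated), and the identification $L(2,3,11,11)\approx S^5$ via the Brieskorn graph theorem is a reasonable supplement to what the paper leaves implicit.

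The gap is in the final step. With $\bfw=(33,22,6,6)$ and $d=66$, your enumeration gives $h^0(\calo(66))=14$ (namely $z_0^2$, $z_1^3$, and the twelve monomials $z_2^kz_3^{11-k}$) and $\sum_{i=0}^{3}h^0(\calo(w_i))=h^0(\calo(33))+h^0(\calo(22))+h^0(\calo(6))+h^0(\calo(6))=1+1+2+2=6$; but $14-6=8$, not $10$, so as written your computation produces a $16$-real-parameter family rather than the claimed $20$. The sum in Proposition \ref{hyslocmod} runs over all four indices, and since $w_2=w_3=6$ the group $\gA\gu\gt(\bbc^{4},\calo(\bfw))$ contains the full $GL(2,\bbc)$ mixing $z_2$ and $z_3$, so the automorphism contribution genuinely is $6$ (one would obtain $10$ only by counting the repeated weight once, giving $4$, which neither the formula nor the geometry supports). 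To complete the proof of the stated dimension you must either exhibit the two missing complex parameters --- e.g.\ deformations of the transverse holomorphic structure not realized by polynomial perturbations, which would require an argument beyond Proposition \ref{hyslocmod} --- or correct the arithmetic; the step ``$14-6=10$'' is a non sequitur as it stands.
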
 

\begin{proof}
One easily checks that the second estimate in Theorem \ref{SEcond} is satisfied. Moreover, from Proposition \ref{hyslocmod} one sees that the local moduli space has real dimension $20$.
\end{proof}

This provides the largest known component of $\gM^{SE}(S^5)$.

\end{example}

\begin{remark}
Notice that if the components of $\bfa$ are pairwise relatively prime, there is no polynomial $p$ to add, so the moduli is a single point. Also if there are two isolated points in the Brieskorn graph and only a quadric can be added, the moduli is a single point.

\end{remark}

\section{$S^1$-Equivariant Symplectic Homology and Brieskorn Manifolds}
In order to distinguish components of $\gM^c_{+,0}(M)$ we make use of the fact that the $+$ component of $S^1$-equivariant symplectic homology of Liouville fillings of Brieskorn manifolds is a contact invariant under some mild and checkable conditions, made precise in Lemma~\ref{lemma:equi_SH_invariant}.
Furthermore, the mean Euler characteristic of this homology, defined in~\eqref{eq:def_mec}, is always an invariant of the natural contact structure on a Brieskorn manifold.

\subsection{On symplectic fillings}
Many contact manifolds arise naturally as the boundary of a symplectic or complex manifold.
The notions that are relevant for this paper are the following.
\begin{definition}
Suppose that $(M,{\mathcal D})$ is a compact, coorientable contact manifold.
A {\bf strong filling} or {\bf convex filling} of $(M,{\mathcal D})$ consists of a symplectic manifold $(W,\omega)$ such that 
\begin{itemize}
\item the boundary of $W$ is diffeomorphic to $M$
\item there is a Liouville vector field $X$ (i.e.~$\mathcal L_X \omega=\omega$) defined in a neighborhood of the boundary which is pointing outward.
\item the kernel of the $1$-form $(i_X \omega)|_M$ is the contact structure ${\mathcal D}$. 
\end{itemize}
We will refer to the boundary of a strong symplectic filling $(W,\omega)$ as the {\bf contact type boundary}.

A {\bf Liouville filling} of $(M,{\mathcal D})$  is a strong filling of $(M,{\mathcal D})$ for which the Liouville vector field $X$ is globally defined.
\end{definition}
These are purely symplectic notions.
For fillings related to complex geometry, we first define $J$-convexity.
\begin{definition}
Assume that $M$ is a real hypersurface in a complex manifold $(W,J)$.
We obtain a distribution ${ \mathcal D}$ on $M$ by setting $\mathcal D := TM \cap JTM$.
We say that $M$ is a $J$-convex hypersurface if there exists a $1$-form $\eta$ on $M$ such that $\mathcal D =\ker \eta$ and $-\eta([X,JX])>0$ for all non-zero vectors $X\in \mathcal D$.

A {\bf holomorphic filling} of a compact, coorientable contact manifold $(M,{\mathcal D})$ consists of a compact, complex manifold $(W,J)$ such that
\begin{itemize}
\item the boundary of $W$ is diffeomorphic to $M$.
\item the boundary of $W$ is $J$-convex and $\mathcal D=TM \cap JTM$.
\end{itemize}

A {\bf Stein filling} of a contact manifold $(M,{\mathcal D})$ is a holomorphic filling of $(M,{\mathcal D})$ that is biholomorphic to a Stein domain.
\end{definition}
Note that a Liouville filling for $(M,{\mathcal D})$ is an exact symplectic manifold $(W,\omega=d\lambda )$, where the Liouville $1$-form is defined by $i_X \omega=:\lambda$.
We will write $(W,\lambda)$ for such an exact symplectic manifold with contact type boundary and call it a Liouville domain.

We claim that a Stein filling $(W,J)$ is automatically Liouville.
To see this, choose a strictly plurisubharmonic function $f:W\to \bbr$ such that the boundary of $W$ is a regular level set. We get the $1$-form $\lambda=-d f\circ J$, and the K\"ahler form $d\lambda$.
The vector field $X$ satisfying $i_X\omega=\lambda$ is hence Liouville and it is transverse to regular level sets of $f$.

\begin{theorem}
Every Sasakian manifold is holomorphically fillable.
\end{theorem}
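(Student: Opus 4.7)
The plan is to build the filling directly from the Kähler cone supplied by the Sasakian structure. Given $(M,\cals)$ with $\cals=(R,\eta,\Phi,g)$, the excerpt already provides the Kähler cone $(C(M)=M\times\bbr^+,\bar g,I,\omega = d(r^2\eta))$ satisfying $IX = \Phi X+\eta(X)\Psi$ and $I\Psi=-R$ (so $IR=\Psi$). My candidate filling is the closed sublevel set $\bar W := \{r\le 1\}\cup\{o\}\subset C(M)\cup\{o\}$ equipped with this complex structure; its smooth boundary is $M\times\{1\}\cong M$.

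First I would verify that $\phi:=r^2$ is a strictly plurisubharmonic exhaustion of $C(M)$, so that $\bar W$ is a Stein analytic space with strictly pseudoconvex boundary. Using $IR=\Psi$ and $I\cald = \Phi\cald\subset\cald$, a short computation shows $-d\phi\circ I = 2r^2\eta$ and therefore $dd^c\phi = 2\omega$. Since $\omega$ is a positive $(1,1)$-form, $\phi$ is strictly plurisubharmonic and the level set $\{r=1\}$ is $J$-convex in the sense of the excerpt.

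Next I would identify the contact structure induced on the boundary. Splitting $TM|_{r=1} = \bbr R\oplus\cald$, we have $IR=\Psi$ in the outward normal direction, while for $X\in\cald$, $IX=\Phi X\in\cald$. Hence $TM\cap ITM = \cald$, and $I|_\cald$ agrees with $\Phi|_\cald$. The $J$-convexity inequality $-\eta([X,IX])>0$ for $X\in\cald$ (with the appropriate coorientation) reduces via $d\eta(X,\Phi X)=-\eta([X,\Phi X])$ to the Sasakian compatibility relation $d\eta(\Phi X,X) = g(X,X)$ inherited from $g=d\eta\circ(\Phi\otimes\BOne)+\eta\otimes\eta$, so the induced contact structure is precisely $\cald$.

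The main obstacle is that the apex $o$ is generally a singular point of the analytic space $\bar W$ (it is smooth only when $M$ is a sphere with its round Sasakian structure), so as it stands $\bar W$ is only a complex analytic space, not a manifold. To upgrade it to a genuine compact complex manifold with boundary $M$, I would invoke Hironaka's resolution of singularities to produce $\pi:W\to\bar W$ that is a biholomorphism over the smooth locus $\bar W\setminus\{o\}$. Since the resolution is supported in an arbitrarily small neighborhood of $o$, the boundary of $W$ is still diffeomorphic to $M$ and carries the same $J$-convex contact structure $\cald$ established above, yielding the required holomorphic filling.
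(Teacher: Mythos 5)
Your strategy is the natural one, and the boundary computations (strict plurisubharmonicity of $r^2$ on $C(M)$, the identification $TM\cap ITM=\cald$ with $I|_{\cald}=\Phi|_{\cald}$, and the resulting $J$-convexity) are essentially correct up to sign conventions. The gap is at the very first step: you take for granted that $\bar W=\{r\le 1\}\cup\{o\}$ is a complex analytic space, so that Hironaka's resolution can be applied to it. Without the apex, $\{r\le 1\}\cong M\times(0,1]$ is a noncompact complex manifold with boundary, and the assertion that the cone point can be adjoined so as to obtain a normal analytic space with an isolated singularity is not a formality --- it is equivalent to the holomorphic fillability (equivalently, the embeddability) of the underlying strictly pseudoconvex CR structure on $M$, i.e.\ essentially to the statement being proved. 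This is exactly what the paper imports from the literature: Rossi's theorem (Theorem 5.60 of \cite{CiEl12}) in dimension at least $5$, and Marinescu--Yeganefar \cite{MaYe07} in dimension $3$, where the point is genuinely delicate because a generic strictly pseudoconvex CR structure on a $3$-manifold is \emph{not} embeddable and its cone cannot be completed. Your resolution-of-singularities step is therefore applied to an object that has not been shown to exist as an analytic space; once that existence is granted, the rest of your argument (resolving the apex away from the boundary) is standard and matches the intended conclusion.

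If you want to salvage the construction without quoting Rossi, one route is: for a quasi-regular Sasakian structure, $C(M)\cup\{o\}$ is the affine cone over the projective orbifold $\calz=M/S^1$ with respect to its orbifold polarization, hence an affine variety with an isolated normal singularity, and your argument then goes through; for an irregular structure, the closure of the Reeb flow in the CR automorphism group is a torus, so one can choose a quasi-regular Reeb field compatible with the \emph{same} underlying CR structure $(\cald,\Phi|_{\cald})$, and the filling built in the quasi-regular case fills the given contact structure as well. Either way, some substitute for the completion-at-the-apex step must be supplied explicitly.
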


In fact, since a Sasakian structure has an underlying strictly pseudoconvex CR structure, it follows from a theorem of Rossi, cf. Theorem 5.60 of \cite{CiEl12}, that a contact structure of Sasaki type is holomorphically fillable if the dimension of $M$ is at least $5$, and hence, K\"ahler fillable by Theorem 5.59 of \cite{CiEl12}.
The $3$-dimensional case was later also shown to be holomorphically fillable by Marinescu and Yeganefar, \cite{MaYe07}. Due to results of Bogomolov and de Oliveira, \cite{BdO96}, the $3$-dimensional case then turns out to be Stein fillable.

\begin{remark}
Not every Sasakian manifold is Stein fillable.
A well-known example is $(\bbr \bbp^{2n+1},\bar {{\mathcal D}}_0)$ with its induced contact structure from the standard contact sphere $(S^{2n+1},{{\mathcal D}}_0)$ for $n>1$.
Its cohomology ring obstructs the existence of Stein filling, see \cite[Example 5.62]{CiEl12}.
It is an open question whether $(\bbr \bbp^{2n+1},\bar {{\mathcal D}}_0)$ admits a Liouville filling.
\end{remark}

For the invariants discussed in this paper, the existence of a Liouville filling is required.
By smoothing the singularity we get the following result.
\begin{proposition}
A Brieskorn-Pham link is Stein fillable.
\end{proposition}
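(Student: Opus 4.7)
The plan is to realize the Milnor fiber of the Brieskorn-Pham polynomial $f(\bfz)=z_0^{a_0}+\cdots+z_n^{a_n}$ as an explicit Stein filling of the link. Because $f$ has an isolated critical point at the origin, for every sufficiently small $\epsilon\neq 0$ the level set $V_\epsilon:=f^{-1}(\epsilon)\subset\bbc^{n+1}$ is a smooth closed complex hypersurface, hence Stein as a closed complex submanifold of the Stein manifold $\bbc^{n+1}$.

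Next I would cut out a Stein domain with the correct boundary. The function $\phi(\bfz)=|\bfz|^2$ is strictly plurisubharmonic on $\bbc^{n+1}$, and strict plurisubharmonicity is preserved under restriction to a complex submanifold, so $\phi|_{V_\epsilon}$ is a proper strictly plurisubharmonic exhaustion. Choosing a regular value $R>0$ of $\phi|_{V_\epsilon}$ by Sard, the sublevel set
\[
W_{\epsilon,R}:=\{\bfz\in V_\epsilon : |\bfz|^2\leq R\}
\]
is a compact complex manifold with smooth $J$-convex boundary $V_\epsilon\cap S^{2n+1}(\sqrt{R})$, and hence a Stein domain; its boundary carries the induced contact structure $\cald_J:=T\partial W_{\epsilon,R}\cap J\,T\partial W_{\epsilon,R}$.

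It remains to identify $(\partial W_{\epsilon,R},\cald_J)$ with the Brieskorn-Pham link $(L(\bfa),\cald_\bfw)$ defined at the start of this section. By Milnor's conic-structure argument, for $\epsilon$ sufficiently small relative to $R$ the sphere $S^{2n+1}(\sqrt R)$ is transverse to $V_\epsilon$, and the family $V_\epsilon\cap S^{2n+1}(\sqrt R)$ depends continuously on $\epsilon$ down to $\epsilon=0$, where it specializes (after rescaling) to $L(\bfa)$. The induced contact forms $-d\phi\circ J|_{V_\epsilon\cap S^{2n+1}(\sqrt R)}$ vary continuously in $\epsilon$, so Gray stability upgrades the smooth identification to a contactomorphism; equivalently, one can exploit the weighted $\bbc^*(\bfw)$-action of \eqref{c*act} to rescale $V_\epsilon$ into a tubular neighborhood of $L(\bfa)$ on the unit sphere and compare the two contact structures directly there.

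The main obstacle is precisely this last identification. Everything else is soft (Sard, preservation of strict plurisubharmonicity under restriction, Steinness of closed complex submanifolds of $\bbc^{n+1}$); matching the $J$-convex contact structure on the smoothed boundary with the Sasaki contact structure on the singular link requires a parametric Moser/Gray argument across the smoothing family, resting on Milnor's transversality estimates to keep the family smooth all the way to $\epsilon=0$.
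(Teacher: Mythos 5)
Your proposal is correct and is exactly the argument the paper has in mind: the paper's entire justification is the phrase ``by smoothing the singularity,'' i.e.\ take the smoothed hypersurface $f^{-1}(\epsilon)$ cut off by a ball, observe it is a Stein domain, and identify its $J$-convex boundary with the link via the standard deformation/Gray-stability argument. You have simply supplied the details (Steinness of the Milnor fiber, strict plurisubharmonicity of $|\bfz|^2$ restricted to it, and the contactomorphism $\partial W_{\epsilon,R}\cong L(\bfa)$) that the paper leaves implicit.
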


\subsection{Equivariant symplectic homology}
In this section we survey the work of Bourgeois and Oancea, \cite{BO:S1_Fredholm,BoOa13a}. We adapt Abouzaid's description of orientations, \cite[Chapter 1.4]{Abo13}, because it turns out to be useful for the Morse-Bott point of view.

We first give a very brief summary of the construction. Equivariant symplectic homology mimics the Borel construction for $S^1$ and applies it to the Floer homology of a functional on the loop space of a symplectic manifold: in particular, generators of the chain complex are (circles of) critical points, and the differential counts $0$-dimensional families of the moduli space of solutions to a PDE describing the ``gradient flow''.
The construction depends on the chosen functional. To remove this dependence, one considers a class of functionals and takes a direct limit over the corresponding homologies.

We now elaborate a bit. To keep things as simple as possible we will outline the construction for a Liouville domain $(W,\lambda)$ satisfying the following condition
\begin{enumerate}
\item[(CF)] the Liouville domain $W$ is simply-connected and has torsion first Chern class, so $[c_1(W)]_{\bbr}=0$ as an element of $H^2_{dR}(W)$.
\end{enumerate}
Denote the boundary by $M=\partial W$ and the contact form on the boundary by $\eta:=\lambda|_{M}$.
The contact structure will be denoted by $\mathcal D$.
Note that for any Liouville filling we have $TW|_{M=\partial W}\cong \langle X,R_\eta \rangle \oplus {\mathcal D}$, where $X$ denotes the Liouville vector field.
Hence $c_1({\mathcal D}=\ker \eta )=i^*c_1(TW)$ is torsion, where $i: M \to W$ is the inclusion.
We will refer to such a Liouville domain as a {\it convenient filling}.

Define the complete Liouville manifold $\bar W$ by attaching the positive end of a symplectization,
\begin{equation}
\label{eq:complete_Liouville}
\bar W=W \cup_\partial ( [1,\infty[ \times M,d(r\lambda|_M)\, ),
\end{equation}
where $r$ is the coordinate in $[1,\infty[$.
Call the extended Liouville form $\bar \lambda$.
Introduce a free circle action on $S^1\times \bar W \times S^{2N+1}$ given by
\begin{equation}
\label{eq:circle_action}
\theta\cdot(t,x,z)=(t+\theta,x,z\cdot \theta).
\end{equation}
Choose a function $H:S^1\times \bar W \times S^{2N+1} \to \bbr$ satisfying
\begin{itemize}
\item $H$ is invariant under the circle action~\eqref{eq:circle_action}.
\item $H|_{S^1\times[r_0,\infty[\times S^{2N+1} }=Sr+b(z)$, for some $r_0 \geq 1$.
Here $S$ is a positive real number that is not equal to the period of any periodic Reeb orbit of $\eta$.
\end{itemize}
We will call $H$ a parametrized time-dependent Hamiltonian.
Denote the free loop space of $\bar W$ by $\Lambda \bar W$ and define the parametrized action functional by
\[
\begin{split}
\mathcal A^N:\Lambda \bar W \times S^{2N+1} & \longrightarrow \bbr \\
(x,z) & \longmapsto -\int_{S^1} x^* \bar \lambda
-\int_0^1 H(t,x(t),z)dt.
\end{split}
\]
We describe the ``Morse'' homology of this action functional using Floer theory following the work of Bourgeois and Oancea.
Due to the $S^1$-invariance of $H$ and hence $\mathcal A^N$, critical points always come in an $S^1$-orbit.
The set of all critical points is given by
$$
\mathcal P(H)=\{(x,z)\in \Lambda \bar W\times S^{2N+1}~|~\dot x=X_H(x),\ \int_0^1 \vec \nabla_z H(t,x(t),z)dt=0 
\}
.
$$
Here $\vec \nabla_z$ denotes the gradient with respect to an auxiliary $S^1$-invariant metric $g$ on $S^{2N+1}$.

We choose additional auxiliary data, namely a family of almost complex structures $J:S^1\times \bar W \times S^{2N+1}\to End(T\bar W)$ satisfying 
\begin{itemize}
\item $J(\cdot,x,\cdot)$ is a compatible complex structure for $(T_x\bar W,d_x\bar \lambda)$.
\item for $r_0$ sufficiently large $J|_{S^1\times [r_0,\infty[ \times M \times S^{2N+1}}$ satisfies $J r\partial_r=R_\eta$ and $\mathcal L_{r\partial_r} J=0$. 
\item $J$ is $S^1$-invariant in the sense that $J(t+\theta,x,z\cdot \theta)=J(t,x,z)$ for all $\theta\in S^1$.
\end{itemize}
This gives rise to a family of Riemannian metrics $G_{t,z}(x)(v,w)=\omega(v,J(t,x,z)w )$ on $\bar W$, which in turn yields a family of $L^2$-metrics.

The formal negative $L^2$-gradient flow of $\mathcal A^N$ motivates the \linebreak parametrized Floer equations.
To define these, we give the cylinder $Z=\bbr\times S^1$ coordinates $s,t$, where $S^1=\bbr/\bbz$.
The parametrized Floer equations are then 
\begin{equation}
\label{eq:param_floer}
\begin{split}
\bar u=(u,z):Z & \longrightarrow \bar W\times S^{2N+1} \\
\frac{\partial}{\partial s} u+J(t,u,z)(\frac{\partial u}{\partial t}-X_H) &=0\\
\frac{d}{ds}z-\int_{0}^1 \vec \nabla_z H(t,u(s,t),z(s))\,dt &=0\\
\lim_{s\to \mp \infty} \bar u(s,t)&\in S_{\pm}
\end{split}
\end{equation}
where $S_{\pm}$ are $S^1$-orbits of critical points of $\mathcal A^N$.
We will denote the moduli space of parametrized Floer trajectories by
$$
\mathcal M(S_+,S_-;H,J,g)=\{ \bar u \text{ solves \eqref{eq:param_floer}} \}/\bbr,
$$
where the $\bbr$-action is induced by reparametrizations in the $s$-direction.
Note that $\mathcal M(S_+,S_-;H,J,g)$ still carries a free circle action.
By Theorem~B in \cite{BO:S1_Fredholm} the quotient space is a smooth manifold for generic Floer data.
\begin{theorem}
For a generic choice of Floer data $(H,J,g)$ the moduli space $\mathcal M_{S^1}(S_+,S_-;H,J,g)=\mathcal M(S_+,S_-;H,J,g)/S^1$ is a smooth manifold of dimension
$$
-\mu(S_+)+\mu(S_-)-1.
$$
\end{theorem}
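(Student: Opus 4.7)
The plan is to follow the strategy of Bourgeois--Oancea (Theorem B in \cite{BO:S1_Fredholm}), which establishes this in the Morse--Bott equivariant framework. First I would set up the functional-analytic framework: complete $\Lambda \bar W \times S^{2N+1}$ to a Banach manifold of maps $\bar u = (u,z)$ with appropriate Sobolev regularity (say $W^{1,p}_{\rm loc}$ with exponential convergence at the ends, $p>2$), and view solutions of the parametrized Floer equations \eqref{eq:param_floer} as the zero set of a section $\bar\partial_{H,J,g}$ of a Banach bundle over this manifold. Because $H$ is linear at infinity with slope $S$ not equal to any period of $R_\eta$, the standard maximum-principle argument on the symplectization end forces $u$ to remain in a compact subset of $\bar W$, giving the necessary $C^0$-estimates.

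Next I would verify that the linearization $D\bar\partial$ at a solution is Fredholm and compute its index. The linearization splits into two blocks: a perturbed Cauchy--Riemann operator along the $u$-component and a linear ODE operator along the $z$-component. Using asymptotic analysis at the Morse--Bott circles $S_\pm$, the contribution from the $u$-part is $-\mu(S_+)+\mu(S_-) + 1$ by the standard Robbin--Salamon / Conley--Zehnder index formula (the $+1$ coming from the dimension $1$ of the Morse--Bott critical manifolds), while the $z$-part, once combined with the finite-dimensional kernels/cokernels at the ends, produces the remaining contribution so that the total Fredholm index of the unparametrized space $\mathcal M(S_+,S_-;H,J,g)$ equals $-\mu(S_+)+\mu(S_-)+1$.

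The main obstacle is the \emph{transversality} argument, which must be carried out $S^1$-equivariantly since $H$, $J$ and $g$ are required to be invariant under the action \eqref{eq:circle_action}. The naive perturbation of $(H,J)$ would break invariance, so one uses the crucial fact that $S^1$ acts freely on $S^{2N+1}$: at any solution $\bar u=(u,z)$, some point $z(s_0)$ has trivial stabilizer, and one can construct $S^1$-invariant perturbations of $H$ and $J$ supported near the orbit through $(u(s_0,\cdot),z(s_0))$ whose linearization still surjects onto cokernels of $D\bar\partial$. This is precisely the equivariant Sard--Smale argument carried out in \cite{BO:S1_Fredholm}; it yields a comeager set of Floer data for which $\mathcal M(S_+,S_-;H,J,g)$ is a smooth manifold of the stated Fredholm dimension.

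Finally, since the $S^1$-action on $S^{2N+1}$ is free, so is the induced action on the moduli space, and the quotient $\mathcal M_{S^1}(S_+,S_-;H,J,g)=\mathcal M(S_+,S_-;H,J,g)/S^1$ is a smooth manifold of dimension one less. Combining, the dimension is
\[
\bigl(-\mu(S_+)+\mu(S_-)+1\bigr) - 1 \;-\; \dim S^1 \;+\;\text{(already-taken $\bbr$-quotient accounted for in the index convention)} \;=\; -\mu(S_+)+\mu(S_-)-1,
\]
which is the desired formula. The verification reduces to checking that the assumption (CF) ensures well-defined Conley--Zehnder indices (via a trivialization of $TW$ along capping disks, using simple connectivity and vanishing of the first Chern class), so that all index computations are coherent.
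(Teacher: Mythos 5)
Your proposal is correct and follows essentially the same route as the paper: the paper gives no independent proof of this statement but simply invokes Theorem~B of Bourgeois--Oancea \cite{BO:S1_Fredholm}, and your sketch (Fredholm setup, index count, and the equivariant Sard--Smale argument exploiting the freeness of the $S^1$-action on $S^{2N+1}$) is precisely an outline of the proof of that theorem. The only blemish is the muddled bookkeeping in your final display, but the underlying arithmetic (index $-\mu(S_+)+\mu(S_-)+1$ for the parametrized solution space, minus one for the $\bbr$-translation already built into $\mathcal M(S_+,S_-;H,J,g)$, minus one for the free $S^1$-action) does land on the stated dimension.
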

Here $\mu(S)$ denotes the parametrized Robbin-Salamon index of $S$, which is explained below in Section \ref{sec:grading}.
\subsubsection{Capping disks and trivializations}
We will now associate a graded abelian group of rank $1$, so a group isomorphic to $\bbz$, with each $S^1$-orbit of critical points.
To do so, note that given an $S^1$-orbit of critical points, say $S=S^1\cdot (x_S,z_S)$, we obtain an $S^1$-family of solutions to the parametrized Floer equations, generated by
$$
u_S(s,t)=(x_S(t),z_S).
$$
Linearize the parametrized Floer equations along this circle of solutions to obtain an $S^1$-family of Fredholm operators, namely for $\theta\in S^1$,
$$
D_{\theta\cdot \bar u_S}:W^{1,p}_\delta(Z,\theta\cdot \bar u_S^*(T\bar W\oplus TS^{2N+1})) \longrightarrow
L^{p}_\delta(Z,\theta\cdot \bar u_S^*(T\bar W\oplus TS^{2N+1})).
$$

An expression for this family of operators and a proof that this family consists of Fredholm operators is given in \cite{BO:S1_Fredholm}.

For each circle of critical points, $S=S^1\cdot(x_S,z_S)$, choose 
\begin{itemize}
\item capping disks $d_S=(d_{S,W},d_{S,S^{2N+1}}):D^2\to  \bar W\times S^{2N+1}$, i.e.~curves connecting a constant loop $\{x_0\} \times\{ z_0 \} \in \Lambda\bar W\times S^{2N+1}$ to the loop $\theta\cdot (x_S,z_S)$;
\item a symplectic trivialization $\epsilon_{S,W}:D^2\times (\bbc^n,\omega_0 )\to d_{S,W}^*T\bar W$;
\item a trivialization $\epsilon_{S,S^{2N+1}}:D^2\times \bbr^{2N+1}\to d_{S,S^{2N+1}}^*T S^{2N+1}$.
\end{itemize}
With respect to these trivializations we get an $S^1$-family of operators
$$
D_{S}: W^{1,p}_\delta(\bbc,d_S^*(T\bar W\oplus TS^{2N+1})) \longrightarrow
L^{p}_\delta(\bbc,d_S^*(T\bar W\oplus TS^{2N+1})).
$$
\subsubsection{Graded abelian groups}
\label{sec:grading}
We associate a determinant line bundle with this family of operators by
$$
Det(D_S)=\Lambda^{top} \ker D_S \otimes \Lambda^{top} \coker D_S^*.
$$
This is a real line bundle over the circle $S$.
\begin{lemma}
Suppose that $S$ is an $S^1$-orbit in $\mathcal P$.
Then the determinant bundle $Det \, D_S\to S$ is a trivial line bundle.
\end{lemma}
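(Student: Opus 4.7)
The plan is to exploit the $S^1$-equivariance of the parametrized Floer setup in order to produce a canonical trivialization of $Det\,D_S \to S$. Since real line bundles over $S^1$ are classified by $\pi_0(O(1)) = \bbz/2$, it suffices either to produce a nowhere-vanishing section, or equivalently to show that the monodromy around $S$ is orientation-preserving.

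First, I would recall that the circle $S = S^1 \cdot (x_S, z_S)$ is by definition a single orbit of the free $S^1$-action~\eqref{eq:circle_action}, and that the whole package of Floer data $(H, J, g)$ has been chosen $S^1$-invariantly. Consequently, the $S^1$-action lifts to bundle isomorphisms
\[
\Phi_\theta:\bar u_S^*(T\bar W \oplus TS^{2N+1}) \longrightarrow (\theta \cdot \bar u_S)^*(T\bar W \oplus TS^{2N+1}),
\]
and hence to linear isomorphisms of the corresponding $W^{1,p}_\delta$ and $L^p_\delta$ spaces used to set up $D_S$. The invariance of $H$ under the action, together with the $S^1$-invariance of $J$ and $g$, yields the intertwining relation
\[
D_{\theta \cdot \bar u_S} \circ \Phi_\theta \;=\; \Phi_\theta \circ D_{\bar u_S}
\]
for every $\theta \in S^1$; this is essentially a tautology coming from the fact that the operators $D_\theta$ are built from $S^1$-invariant data along an $S^1$-orbit.

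From this relation, $\Phi_\theta$ restricts to linear isomorphisms
\[
\Phi_\theta:\ker D_{\bar u_S} \xrightarrow{\;\cong\;} \ker D_{\theta \cdot \bar u_S},
\qquad
\Phi_\theta:\coker D_{\bar u_S} \xrightarrow{\;\cong\;} \coker D_{\theta \cdot \bar u_S}.
\]
Taking top exterior powers and tensoring gives an isomorphism
\[
\widehat \Phi_\theta: Det(D_{\bar u_S}) \xrightarrow{\;\cong\;} Det(D_{\theta \cdot \bar u_S}).
\]
Regarded as a map $S^1 \times Det(D_{\bar u_S}) \to Det(D_S)$, this is continuous in $\theta$ by continuity of the $S^1$-action on the Sobolev spaces, and nowhere zero on each fiber. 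Because $\Phi_\theta$ comes from an honest group action of $S^1 = \bbr/\bbz$, one has $\Phi_{\theta + 1} = \Phi_\theta$, so $\widehat \Phi_\theta$ descends well to the orbit $S$ (equivalently, the monodromy around $S$ is the identity). This exhibits a global nowhere-vanishing section of $Det(D_S)$, proving triviality.

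The only technical point I would treat carefully is the compatibility of $\Phi_\theta$ with the chosen capping disk $d_S$ and the trivializations $\epsilon_{S,W}, \epsilon_{S,S^{2N+1}}$: the operator $D_S$ depends on these auxiliary choices, and the lift of the $S^1$-action to the trivialized bundles must be shown to be homotopic to the one coming from rotating the capping disk itself. I expect this to be the main obstacle, but it is standard: one can rotate the capping disk by $\theta$ in a family and identify the rotated frame with the original via a path in $Sp(2n) \times GL(2N+1,\bbr)$, whose contribution to orientations is trivial because these groups are connected (working with $Sp(2n)$) and because we are computing a real determinant line. The remaining steps are essentially formal.
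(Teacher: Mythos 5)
The paper states this lemma without proof: it sits in the survey portion of the construction of $S^1$-equivariant symplectic homology and is implicitly deferred to Bourgeois--Oancea and to Abouzaid's treatment of orientations, so there is no in-paper argument to compare against. Your argument is the standard one and is essentially correct: since $(H,J,g)$ are invariant under the circle action \eqref{eq:circle_action}, the action conjugates the linearized operators along the orbit, and the induced isomorphisms on kernels and cokernels assemble into a nowhere-vanishing section of $Det\,D_S$. The one point you should make fully explicit is that the conclusion hinges on the \emph{freeness} of the action \eqref{eq:circle_action}, which is guaranteed by the Hopf action on the $S^{2N+1}$-factor: because the stabilizer of $(x_S,z_S)$ is trivial, the orbit $S$ is parametrized bijectively by $\theta\in\bbr/\bbz$, so the monodromy of your trivializing section is $\widehat\Phi_1=\mathrm{id}$. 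If the stabilizer were $\bbz/k$, the orbit would be the quotient circle and the monodromy would be $\widehat\Phi_{1/k}$, which can act by $-1$; this is precisely the bad-orbit phenomenon of the unparametrized theory that the auxiliary $S^{2N+1}$ is introduced to kill, so freeness is not a cosmetic hypothesis but the substance of the lemma. Your closing remark is slightly off in that $GL(2N+1,\bbr)$ is not connected, but what you actually need there --- that a periodic family of trivialization changes based at the identity induces trivial monodromy on the determinant lines --- holds in any case, so the argument goes through.
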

By the lemma, the determinant bundle is trivial along any $S^1$-orbit $S$ in $\mathcal P(H)$.
We have hence two orientations, i.e.~homotopy classes of non-vanishing sections, which we call $\delta_S^+$ and $\delta_S^-$.
Define the orientation line of $S$ as the free abelian group of rank $1$, given by
$$
o_{S}:=\langle \delta_{S}^+,\delta_{S}^-~|~\delta_{S}^+ +\delta_{S}^-=0 \rangle
.
$$
We now come to the grading.
To define the parametrized Robbin-Salamon index $\mu(S)$ of the $S^1$-orbit $S^1\cdot(x_S,z_S)$, consider the extended Hamiltonian $\tilde H:S^1 \times (\bar W,d\bar \lambda) \times (T^*S^{2N+1},dz\w dp) \to \bbr$, defined by $\tilde H(t,x;z,p)=H(t,x,z)$.
By the assumption that $(x_S,z_S)$ is a critical point, the map
$$
t\longmapsto \tilde x=(x_S(t),z_S,p(t)=p(0)-\int_0^t \vec \nabla_z H(\tau,x_S(\tau),z_S)d\tau )
$$
gives a $1$-periodic orbit of $\tilde H$.
The index $\mu(S)$ is then defined as the usual Robbin-Salamon index of the $1$-periodic orbit $\tilde x$ with respect to the symplectic trivialization induced by $\epsilon_{S,W}$ and $\epsilon_{S,S^{2N+1}}$.

We will grade the orientation line $o_S$ by $-\mu(S)+N$. 
The choice of the shift by $N$ is used in Theorem~\ref{thm:continuation}.
We now define the Floer chain complex as the graded $\bbz$-module freely generated by the orientation lines of $\mathcal P(H)$,
$$
SC_*^{S^1,N}(W,H,J,g)=\bigoplus_{S \in \mathcal P (H)}  o_S
.
$$
Given a regular parametrized Floer trajectory $u\in \mathcal M(S_+,S_-;H,J,g)$ of index $1$, we construct an isomorphism map $\partial_u: o_{S_+}\longrightarrow o_{S_-}$ by a gluing construction, see Lemma~1.5.4 from~\cite{Abo13}.
\begin{lemma}
Suppose that $u\in \mathcal M(S_+,S_-;H,J,g)$ is regular  of index $1$. Then $\partial_u$ is an isomorphism.
\end{lemma}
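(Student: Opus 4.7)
The plan is to reduce the lemma to a linear gluing isomorphism of determinant lines and to produce a canonical generator of the trajectory's determinant from regularity. Since each orientation line $o_{S_\pm}$ is a free abelian group of rank $1$ (generated by $\delta_{S_\pm}^\pm$ with relation $\delta_{S_\pm}^++\delta_{S_\pm}^-=0$), any homomorphism $\partial_u:o_{S_+}\to o_{S_-}$ is either zero or an isomorphism. So the task is to verify that the gluing construction delivers a non-zero map.

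First I would recall the analytic setup for $\partial_u$. Linearizing the parametrized Floer equations~\eqref{eq:param_floer} along $u$ yields a Fredholm operator $D_u$ between weighted Sobolev spaces on the cylinder $Z=\bbr\times S^1$, with asymptotic operators read off from the capped orbits $(d_{S_\pm},\epsilon_{S_\pm})$ at $s\to\pm\infty$. The $\bbr$-translation direction $\partial_s u$ lies in $\ker D_u$; regularity of $u$ together with the index-$1$ assumption (which encodes $\mu(S_-)-\mu(S_+)=1$ via the Robbin--Salamon formula and the grading shift chosen above) means that after dividing out $\bbr\langle\partial_s u\rangle$ the reduced operator becomes bijective. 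This gives a canonical non-zero element $\omega_u\in Det(D_u)$.

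Next I would invoke the linear gluing theorem for Cauchy--Riemann type operators on broken cylinders, as developed by Floer--Hofer and Schwarz and adapted to the $S^1$-parametrized Morse--Bott framework by Bourgeois--Oancea and Abouzaid (the content of Lemma 1.5.4 of \cite{Abo13}). Concatenating the capping disk $d_{S_+}$ with the trajectory $u$ produces a capping disk for $S_-$, and because condition (CF) holds ($\pi_1(W)=0$ and $c_1(W)$ torsion) this glued disk differs from the chosen $d_{S_-}$ only by a class on which the Robbin--Salamon index vanishes, so gradings are consistent. A standard pregluing/Newton-iteration argument then produces a canonical isomorphism
\[
\Phi_u: Det(D_{S_+})\otimes Det(D_u)\longrightarrow Det(D_{S_-}),
\]
compatible with the chosen symplectic trivializations $\epsilon_{S_\pm,W}$ and $\epsilon_{S_\pm,S^{2N+1}}$.

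Finally, $\partial_u$ is defined on generators by $\delta^\pm_{S_+}\mapsto \Phi_u(\delta^\pm_{S_+}\otimes \omega_u)$ and extended $\bbz$-linearly; the relation $\delta^++\delta^-=0$ is preserved because $\Phi_u$ is $\bbz$-linear. Since $\Phi_u$ is an isomorphism and $\omega_u\neq 0$, the image is $\pm$ a generator of $o_{S_-}$, so $\partial_u$ is an isomorphism. The only genuinely analytic step is the construction of $\Phi_u$ via linear gluing; the rest is formal. That linear gluing step is also the main obstacle, since in the equivariant Morse--Bott setting one must track the $S^{2N+1}$-factor and ensure the gluing is compatible with the circle action, but this is precisely what Bourgeois--Oancea's Fredholm theory in~\cite{BO:S1_Fredholm} provides.
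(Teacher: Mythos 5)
Your proposal is correct and takes essentially the same route as the paper, which in fact offers no written proof at all: it simply constructs $\partial_u$ by the linear gluing of determinant lines (citing Lemma 1.5.4 of \cite{Abo13} and the Fredholm theory of \cite{BO:S1_Fredholm}), precisely the argument you spell out, with the isomorphism property coming from the fact that gluing plus the canonical element $\omega_u\in Det(D_u)$ (from regularity and index $1$, the kernel being spanned by $\partial_s u$) carries orientations to orientations, hence generators of $o_{S_+}$ to generators of $o_{S_-}$. One small correction: your opening claim that any homomorphism between rank-one free abelian groups is either zero or an isomorphism is false (multiplication by $2$ on $\bbz$ is neither), but this is harmless, since your final paragraph never uses it and instead verifies directly that the image of a generator is $\pm$ a generator.
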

We define the restriction of the differential of the equivariant Floer complex to an orientation line by
$$
\partial^{S^1}|_{o_{S_+}}:=\sum_{\underset{-\mu(S_+)+\mu(S_-)=1}{S_-\in \mathcal P}} \sum_{[u]\in \mathcal M_{S^1}(S_+,S_-;H,J,g)} \partial_u.
$$
\begin{remark}
By choosing \emph{coherent} (i.e.~gluing compatible) orientations one can identify each orientation line with $\bbz$.
The isomorphisms $\partial_u$ then simply map $1\mapsto \epsilon(u) \cdot 1$, where the sign $\epsilon(u)$ is determined by comparing the generator of the $S^1$-action, which lies in the kernel of $D_u$, with the coherent orientation on the moduli space.
\end{remark}

We have the following \cite[Proposition 4.5]{BoOa13a}
\begin{proposition}
The map $\partial^{S^1}$ satisfies ${\partial^{S^1}}\circ {\partial^{S^1}}=0$. 
\end{proposition}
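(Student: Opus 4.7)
The plan is to establish $(\partial^{S^1})^2=0$ by the standard cobordism argument adapted to the parametrized setting of Bourgeois--Oancea. Fix critical circles $S_+,S_-\in \mathcal P(H)$ with index difference $-\mu(S_+)+\mu(S_-)=2$. The matrix coefficient of $(\partial^{S^1})^2$ from $o_{S_+}$ to $o_{S_-}$ is, by construction, the signed sum over intermediate critical circles $S_0$ and over pairs $([u_1],[u_2])\in \mathcal M_{S^1}(S_+,S_0;H,J,g)\times \mathcal M_{S^1}(S_0,S_-;H,J,g)$ with both factors zero-dimensional, of the composition $\partial_{u_2}\circ \partial_{u_1}$. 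I would identify this signed count with the algebraic count of boundary points of a compactification $\overline{\mathcal M}_{S^1}(S_+,S_-;H,J,g)$ of the $1$-dimensional quotient moduli space, which is zero since an oriented compact $1$-manifold has zero signed boundary.

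The argument requires three analytic ingredients: (i) compactness of $\mathcal M_{S^1}(S_+,S_-;H,J,g)$ modulo breaking; (ii) transversality for generic Floer data; and (iii) a gluing theorem identifying broken configurations with the ends of each $1$-dimensional component. For (i), the hypothesis (CF) rules out sphere and disk bubbling by exactness and the torsion first Chern class, while the linear-at-infinity form $H=Sr+b(z)$ combined with the $J$-convexity condition $Jr\partial_r=R_\eta$ gives an integrated maximum principle preventing trajectories from escaping to the positive end of the symplectization; the $S^{2N+1}$-component is automatically bounded, so one reduces to standard Floer--Gromov compactness for cylinders, and the boundary of $\overline{\mathcal M}_{S^1}$ consists precisely of once-broken parametrized trajectories. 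For (ii), Theorem~B of \cite{BO:S1_Fredholm} provides the necessary Fredholm theory in the parametrized setting and yields smooth moduli spaces of the expected dimension after quotienting by the free $S^1$-action. For (iii), since the $S^1$-action on the moduli of parametrized trajectories is free, the gluing statement reduces to the usual non-equivariant gluing after passing to a local slice, and broken pairs $(u_1,u_2)$ with matching asymptote on $S_0$ can be glued into genuine solutions parametrized by a large gluing parameter, exhausting all ends of the $1$-dimensional stratum.

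It remains to check that the signs match, i.e.~that the coherent orientation on $\overline{\mathcal M}_{S^1}(S_+,S_-;H,J,g)$ restricts on each broken end to $\partial_{u_2}\circ \partial_{u_1}:o_{S_+}\to o_{S_-}$ defined on orientation lines. In Abouzaid's formulation adopted above, the map $\partial_u$ is itself defined via the gluing identification of determinant lines, so composition $\partial_{u_2}\circ \partial_{u_1}$ corresponds to the outgoing orientation at one end of the $1$-dimensional moduli space, while the opposite end contributes with the opposite sign, exactly matching the cancellation required for $(\partial^{S^1})^2=0$. I expect the main obstacle to be precisely this sign verification in the equivariant case, where the infinitesimal generator of the $S^1$-action lies in the kernel of each $D_{\bar u_S}$ and must be quotiented out consistently when comparing the kernel-cokernel orientation on a glued trajectory with the product orientation on the broken configuration; the normalization is fixed once and for all by the shift $-\mu(S)+N$ in the grading and by choosing coherent orientations in the parametrized setup. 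Summing the contributions of the two ends of each component of $\overline{\mathcal M}_{S^1}(S_+,S_-;H,J,g)$ yields $(\partial^{S^1})^2|_{o_{S_+}}=0$, which proves the proposition.
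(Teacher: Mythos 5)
The paper gives no proof of this proposition; it is quoted directly from Bourgeois--Oancea \cite[Proposition 4.5]{BoOa13a}. Your outline --- Floer--Gromov compactness modulo breaking (with bubbling excluded by exactness and the maximum principle at the symplectization end), transversality from Theorem~B of \cite{BO:S1_Fredholm}, gluing of broken pairs onto the ends of the one-dimensional quotient moduli spaces, and the orientation bookkeeping in Abouzaid's determinant-line formulation --- is precisely the standard argument carried out in that reference, so it is consistent with the proof the paper relies on.
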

Equivariant Floer homology is then defined as
$$
SH^{S^1,N}(W,H,J):=H_*(SC_*^{S^1,N}(W,H,J),\partial^{S^1}).
$$
This homology group depends on the choice of Hamiltonian and on $N$.
To remove this dependence one has the following theorem, \cite[Lemma 5.6]{BoOa13a}.
\begin{theorem}
\label{thm:continuation}
Assume that the slope of $H_2$ is greater than or equal to that of $H_1$, and assume that $N_2\geq N_1$.
Then there are chain maps
$$
c_{12}:SC_*^{S^1,N_1}(W,H_1,J_1,g_1) \longrightarrow SC_*^{S^1,N_2}(W,H_2,J_2,g_2).
$$
\end{theorem}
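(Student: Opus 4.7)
The plan is to construct $c_{12}$ by counting index-zero solutions to an $s$-dependent version of the parametrized Floer equation~\eqref{eq:param_floer}, interpolating between the two sets of data. First I would use the standard $S^1$-equivariant totally geodesic inclusion $S^{2N_1+1} \hookrightarrow S^{2N_2+1}$ induced by $\bbc^{N_1+1}\subset\bbc^{N_2+1}$, and extend $(H_1,J_1,g_1)$ to parametrized Floer data $(\tilde H_1,\tilde J_1,\tilde g_1)$ on $S^1\times\bar W\times S^{2N_2+1}$ in such a way that the additional coordinates contribute a nondegenerate Morse-type term in the $z$-variable; this puts both sets of data on the same ambient space while keeping critical circles in natural correspondence (up to a controlled shift in Robbin--Salamon index that is absorbed by the grading convention $-\mu(S)+N$).

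Next, I would choose a homotopy $(H_s,J_s,g_s)$, $s\in\bbr$, of parametrized Floer data on $S^1\times\bar W\times S^{2N_2+1}$, equal to $(\tilde H_1,\tilde J_1,\tilde g_1)$ for $s\ll 0$ and to $(H_2,J_2,g_2)$ for $s\gg 0$, and arranged so that the slope at infinity $S(s)$ is monotone nondecreasing in $s$. This is possible precisely because $S_2\geq S_1$. Consider the moduli spaces
\[
\mathcal M(S_+,S_-;\{H_s,J_s,g_s\})=\{\bar u\text{ solves the $s$-dependent version of \eqref{eq:param_floer}}\},
\]
and their $S^1$-quotients $\mathcal M_{S^1}(S_+,S_-)$. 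The map $c_{12}$ is then defined on orientation lines by
\[
c_{12}|_{o_{S_+}}=\sum_{\substack{S_-\in\mathcal P(H_2)\\ -\mu(S_+)+\mu(S_-)=0}}\ \sum_{[u]\in\mathcal M_{S^1}(S_+,S_-)}\partial_u,
\]
where $\partial_u:o_{S_+}\to o_{S_-}$ is the gluing-induced isomorphism exactly as in the construction of the differential. The grading shift by $N$ ensures $c_{12}$ has degree zero: the extra $2(N_2-N_1)$ directions in the enlarged sphere contribute precisely $(N_2-N_1)$ to the Robbin--Salamon index of every critical orbit, which cancels against the change from $+N_1$ to $+N_2$ in the grading. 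Verifying that $c_{12}$ is a chain map proceeds via the standard analysis of the boundary of the $1$-dimensional component of $\mathcal M_{S^1}$, whose ends correspond to broken configurations realizing $\partial^{S^1}\circ c_{12}$ and $c_{12}\circ\partial^{S^1}$ with opposite signs by coherence of orientations.

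The main obstacle is compactness of the $s$-dependent moduli spaces. Since $\bar W$ is noncompact, one must rule out escape of solutions into the symplectization end $[1,\infty[\times M$. This is done by an integrated maximum principle on the $r$-coordinate of $u(s,t)$, applied to the parametrized Floer equation; the computation uses crucially that $\partial_s H_s\geq 0$ along the cylindrical end, which is guaranteed by the monotone-slope hypothesis. Combined with the standard bubbling analysis (no bubbles form since $[\omega]$ is exact on $\bar W$) and the $S^1$-invariance of all chosen data, this gives Gromov-type compactness of $\mathcal M_{S^1}(S_+,S_-)$; transversality for generic $s$-dependent data is obtained by the same methods as in \cite{BO:S1_Fredholm}, giving smooth manifolds of the expected dimension. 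A separate but routine step is independence (up to chain homotopy) of $c_{12}$ from the choice of monotone homotopy, which follows from a second-order homotopy-of-homotopies argument; this ensures the induced map on $SH^{S^1}$ is well-defined.
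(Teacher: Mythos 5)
Your construction is exactly the one the paper has in mind: the paper states this result as a citation of Bourgeois--Oancea and describes the maps $c_{12}$ precisely as counts of rigid parametrized Floer trajectories under an increasing (monotone-slope) homotopy of Hamiltonians, with the inclusion $S^{2N_1+1}\hookrightarrow S^{2N_2+1}$ handling the change in $N$ and the shift by $N$ in the grading making the maps degree-preserving. Your sketch, including the energy/maximum-principle argument for compactness and the breaking analysis for the chain-map property, matches that approach.
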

We will refer to these chain maps as continuation maps: they are defined by counting rigid parametrized Floer trajectories under an increasing homotopy  of Hamiltonians.
Equivariant symplectic homology is then defined as
$$
SH^{S^1}(W):=\varinjlim_{N} \varinjlim_{S}SH^{S^1,N}(W,H_{S,N},J_{S,N},g_{S,N}).
$$
This homology no longer depends on the choice of complex structure and Hamiltonian.

From now on, we suppress the $(W,H,J,g)$-dependence in the notation of the chain complexes to shorten notation.
We now explain that equivariant symplectic homology comes equipped with a tautological exact sequence.
To define this, fix $\epsilon>0$ small, and choose a parametrized Hamiltonian with the following properties:
\begin{itemize}
\item  $H(t,\cdot,z)$ has $C^2$-norm less than $\epsilon$ on $W$ for every $(t,z) \in S^1\times S^{2N+1}$.
\item $H$ is positive on the whole cylindrical end $S^1\times [1,\infty[ \times M \times S^{2N+1}$.
\item $H$ has slope $S$ on the cylindrical end $S^1\times [r_0,\infty[ \times M \times S^{2N+1}$, for some $r_0 \geq 1$.
\end{itemize}
One chooses $\epsilon$ so small such that all critical points of $\mathcal A^N$ with action less than $\epsilon$ are fixed points.
Since the differential $\partial^{S^1}:SC^{S^1,N}_*\to SC^{S^1,N}_{*-1}$ is action decreasing, we obtain a subcomplex,
$$
(SC^{-,S^1,N}_*,\partial^{S^1})=(\{ S_{(x,z)}\in SC^{S^1,N}_*~|~\mathcal A^{N}(x,z)<\epsilon \},\partial^{S^1}).
$$
Denote the quotient complex by $SC^{+,S^1,N}=SC^{S^1,N}/SC^{-,S^1,N}$, so we have a short exact sequence of chain complexes,
$$
0\longrightarrow SC^{-,S^1,N}_*
\longrightarrow SC^{S^1,N}_*
\longrightarrow SC^{+,S^1,N}_*
\longrightarrow 0.
$$
If we denote the associated homologies by $SH$, we get a long exact sequence in homology,
$$
\longrightarrow
SH^{-,S^1,N}_*
\longrightarrow SH^{S^1,N}_*
\longrightarrow SH^{+,S^1,N}_*
\longrightarrow
SH^{-,S^1,N}_{*-1}
\longrightarrow
.
$$
Now take the direct limit over the slopes and over the inclusion $S^{2N+1}\to S^{2N'+1}$ to obtain $SH^{-,S^1}(W)$ and $SH^{+,S^1}(W)$.
These homology groups do not longer depend on the the choice of complex structure and Hamiltonian.
The above tautological exact sequence then becomes the so-called Viterbo sequence
\begin{theorem}
There is a long exact sequence
$$
\longrightarrow
SH^{-,S^1}_*(W)
\longrightarrow SH^{S^1}_*(W)
\longrightarrow SH^{+,S^1}_*(W)
\longrightarrow
SH^{-,S^1}_{*-1}(W)
\longrightarrow
.
$$
\end{theorem}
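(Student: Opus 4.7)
The plan is to obtain the Viterbo long exact sequence by first producing a compatible family of finite-dimensional long exact sequences at each approximation level $(N,S)$, then passing to the direct limit. I would proceed in three steps.

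First, I would fix a parametrized Hamiltonian $H$ of slope $S$ satisfying the three bullet points preceding the statement, with $H$ of $C^2$-norm less than $\epsilon$ on $W$. The critical points of $\mathcal{A}^N$ of action less than $\epsilon$ correspond to the constant loops at critical points of $H|_W$, which form a Morse--Bott family $S^1 \subset S^{2N+1}$ of orbits for each critical point. Since $\partial^{S^1}$ is action-decreasing (this is immediate from the parametrized Floer equations, whose $s$-derivative computes the action change and is non-positive along trajectories with the appropriate sign convention for $\bar\lambda$), the subgroup $SC^{-,S^1,N}_*$ is a genuine subcomplex. The quotient $SC^{+,S^1,N}_* := SC^{S^1,N}_*/SC^{-,S^1,N}_*$ is then a chain complex, and the tautological short exact sequence
\[
0 \longrightarrow SC^{-,S^1,N}_* \longrightarrow SC^{S^1,N}_* \longrightarrow SC^{+,S^1,N}_* \longrightarrow 0
\]
induces a long exact sequence in homology by the standard zigzag lemma.

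Second, I would check naturality with respect to the continuation maps of Theorem~\ref{thm:continuation}. One needs to verify that for an increasing homotopy $(H_s,J_s,g_s)$ between admissible data $(H_1,J_1,g_1)$ and $(H_2,J_2,g_2)$ with $N_2 \geq N_1$, the chain map $c_{12}$ sends $SC^{-,S^1,N_1}_*$ into $SC^{-,S^1,N_2}_*$. Here one must arrange the homotopy so that its restriction to $W$ stays of $C^2$-norm less than $\epsilon$, while only the slope on the cylindrical end is increased; standard monotonicity estimates for solutions of the parametrized Floer equation with monotone homotopies then yield the action bound needed. This ensures that $c_{12}$ descends to a map of the entire short exact sequence, and hence induces a map of the associated long exact sequences, commuting with the connecting homomorphisms.

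Third, I would pass to the direct limit. Since filtered colimits of abelian groups are exact, applying $\varinjlim_N \varinjlim_S$ to the commuting system of long exact sequences produces the desired Viterbo sequence
\[
\cdots \longrightarrow SH^{-,S^1}_*(W) \longrightarrow SH^{S^1}_*(W) \longrightarrow SH^{+,S^1}_*(W) \longrightarrow SH^{-,S^1}_{*-1}(W) \longrightarrow \cdots,
\]
and the three limit groups no longer depend on Hamiltonian or almost complex structure by the standard invariance argument already used to define $SH^{S^1}(W)$.

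The main obstacle is the second step: verifying that the action filtration survives continuation, i.e., that one can choose homotopies between admissible data for the subcomplex $SC^{-,S^1,N}_*$ monotone in slope only on the cylindrical end while being small on $W$. This requires care because one is simultaneously changing $N$ and the slope $S$, so the homotopy takes place in the larger parametrized setting on $\bar W \times S^{2N'+1}$; once this compatibility is in place, the rest of the argument is formal homological algebra.
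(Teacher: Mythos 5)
Your proposal follows essentially the same route as the paper: the tautological short exact sequence of chain complexes coming from the action filtration (using that $\partial^{S^1}$ is action-decreasing and that $\epsilon$ is chosen so that low-action critical points are constants), the induced long exact sequence in homology, and passage to the direct limit over slopes and over $N$ using exactness of filtered colimits. The paper leaves your second step --- compatibility of the action filtration with the continuation maps --- implicit, so your write-up is if anything slightly more careful on the one point that genuinely requires checking.
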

The homology groups $SH^{-,S^1}$ measure the differential topology of $W$ in the following sense, see \cite{BoOa13a}, Theorem 4.7.
\begin{theorem}
\label{thm:SH-}
There is an isomorphism
$$
SH^{-,S^1}_*(W)\cong H_{*+n}(W,\partial W;\bbz)\otimes H_*(\bbc P^\infty;\bbz).
$$
\end{theorem}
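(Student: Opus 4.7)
The strategy is to adapt Viterbo's classical isomorphism $SH^-_*(W)\cong H_{*+n}(W,\partial W;\bbz)$ to the $S^1$-equivariant setting by exploiting the Borel-construction definition of $SH^{S^1}$. In the low-action range defining $SH^-$ only constant one-periodic orbits contribute, so the equivariant Floer theory should collapse to Morse theory on a product $W\times \bbc P^N$; tensoring with $H_*(\bbc P^\infty;\bbz)$ then arises naturally by taking the direct limit over $N\to\infty$ of the finite-dimensional approximations.

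Concretely, I would choose a parametrized Hamiltonian of split form $H(t,x,z)=h(x)+f(z)$, where $h:\bar W\to\bbr$ is a $C^2$-small, time-independent Morse function which is negative on $W$, vanishes near $\partial W$, and becomes linear $Sr$ on the cylindrical end with slope $S$ avoiding the Reeb-period spectrum, and $f:S^{2N+1}\to \bbr$ is the pull-back under the Hopf fibration of a Morse function on $\bbc P^N$. The resulting $H$ is $S^1$-invariant, and the critical set of $\mathcal A^N$ with action below the cut-off $\epsilon$ consists precisely of the constant orbits $(x_0,z_0)$ with $x_0\in\mathrm{Crit}(h)\cap\mathrm{int}(W)$ and $z_0\in\mathrm{Crit}(f)$. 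Their $S^1$-orbits correspond bijectively to $\mathrm{Crit}(h)\times\mathrm{Crit}_{\bbc P^N}(f)$, and a direct computation of the parametrized Robbin--Salamon index, combined with the $-\mu+N$ grading convention of Section~\ref{sec:grading}, yields exactly the degree shift by $n$. With a product $S^1$-invariant almost complex structure and metric, a Morse--Bott argument in the spirit of Bourgeois--Oancea then identifies the low-energy parametrized Floer trajectories with pairs of gradient flow lines of $h$ on $W$ and of $f$ on $\bbc P^N$, so that the equivariant differential agrees with the Morse differential on the product complex. Since $h$ is negative on $W$ and increases linearly on the collar, only interior critical points contribute, yielding $H_{*+n}(W,\partial W;\bbz)$ in the first factor; compatibility of the continuation maps with the standard inclusions $\bbc P^N\hookrightarrow\bbc P^{N+1}$ then lets the direct limit produce $H_*(\bbc P^\infty;\bbz)$ in the second.

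The main obstacle is the rigorous justification of the tensor-product splitting of the differential. A priori the parametrized Floer equations~\eqref{eq:param_floer} couple the two factors through the shared almost complex structure and the $z$-dependence of the metric, so Floer cylinders need not split as products of gradient lines even when $H$ does. Bourgeois--Oancea resolve this through a Morse--Bott perturbation scheme combined with a transversality analysis for split auxiliary data, where the bulk of the technical work lies; once this decomposition is in hand, the remaining steps reduce to standard Morse theory on manifolds with boundary and to routine direct-limit bookkeeping.
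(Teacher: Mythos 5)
The paper does not actually prove this statement: it is imported verbatim from Bourgeois--Oancea \cite{BoOa13a} (Theorem 4.7 there), so there is no internal argument to compare against. Your sketch is essentially the argument of that reference: a $C^2$-small autonomous Hamiltonian of split form $h(x)+f(z)$ with $f$ pulled back from $\bbc P^N$, identification of the low-action part of the parametrized Floer complex with the Morse complex of $W\times\bbc P^N$ (quotiented by the free Hopf action on the second factor), the index computation giving the shift by $n$ and the relative homology $H_{*+n}(W,\partial W;\bbz)$, and a direct limit over $N$ producing $H_*(\bbc P^\infty;\bbz)$. The one place where you over-complicate matters is the worry about the ``tensor-product splitting'' of the differential: if the almost complex structure is chosen independent of $z$ and the auxiliary metric on $S^{2N+1}$ is fixed, the two equations in~\eqref{eq:param_floer} literally decouple for split $H$, so the genuine technical content is only the Salamon--Zehnder-type identification of low-energy Floer cylinders with ($t$-independent) gradient lines together with the attendant transversality --- which is indeed where the work in \cite{BoOa13a} lies, exactly as you indicate.
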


\begin{remark}
Morally we may think of $SC^{+,S^1,N}$ as being generated by $1$-periodic orbits that are not fixed points, and by a judicious choice of Hamiltonians we can think of $SH^{+,S^1}$ as being generated by periodic Reeb orbits on the boundary.
This picture is complicated by the fact that the differential counts Floer trajectories which may go through the filling.
The interpretation in terms of the Reeb flow can be made more precise with the Morse-Bott spectral sequence, see Theorem~\ref{thm:SS_equivariant_homology} for a formulation in a special case.
\end{remark}

Concerning the symplectic invariance of equivariant symplectic homology, the following theorem, see \cite{Gutt15}, will be relevant.
\begin{theorem}
\label{thm:SH_S1_sympl_def_invar}
If $W$ is convenient filling, then $SH^{+,S^1}(W)$ is independent of the choice of Liouville form $\lambda$.
\end{theorem}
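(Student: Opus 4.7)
The plan is to combine the Viterbo long exact sequence with the topological invariance of $SH^{-,S^1}(W)$ supplied by Theorem~\ref{thm:SH-}. Fix two Liouville forms $\lambda_0$ and $\lambda_1$ on $W$, form the two completions $\bar W_i$ as in \eqref{eq:complete_Liouville}, and denote the resulting equivariant Floer complexes by $SC^{S^1}(\lambda_i)$. My goal is to build chain-level maps $SC^{S^1}(\lambda_0) \to SC^{S^1}(\lambda_1)$ and backwards which are mutual quasi-inverses and which respect the action filtration defining $SC^{\pm,S^1}$. Given such maps, the five lemma applied to the induced ladder of Viterbo sequences promotes invariance of $SH^{S^1}$ to invariance of $SH^{+,S^1}$.

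First I would verify that the gradings match. The hypothesis (CF) enters here: simple connectivity of $W$ provides capping disks for every periodic orbit, and the torsion of $c_1(TW)$ makes the Robbin--Salamon index independent of the choice of capping disk and trivialization up to a single overall shift that is common to both $\lambda_0$ and $\lambda_1$. Next I would establish invariance of $SH^{S^1}(W)$ itself by constructing continuation maps. The naive attempt of interpolating $\bar\lambda_s = (1-s)\bar\lambda_0 + s\bar\lambda_1$ fails because $d\bar\lambda_s$ need not be symplectic, so instead I would work on the common smooth manifold $W \cup ([1,\infty) \times \partial W)$ and interpolate the symplectic data only far out on the cylindrical end, where after a suitable rescaling both completions look like a symplectization. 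A monotone homotopy of $S^1$-invariant admissible parametrized Hamiltonians then yields chain maps via the standard Bourgeois--Oancea Fredholm machinery, and a homotopy-of-homotopies argument shows that the compositions are chain-homotopic to the identity.

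For the final step, action-monotonicity of the continuation maps ensures that they preserve the subcomplex $SC^{-,S^1}$ and hence descend to the quotient $SC^{+,S^1}$. This produces a commuting ladder of Viterbo long exact sequences
$$\begin{array}{ccccccccc} \cdots & \to & SH^{-,S^1}(\lambda_0) & \to & SH^{S^1}(\lambda_0) & \to & SH^{+,S^1}(\lambda_0) & \to & \cdots \\ & & \downarrow & & \downarrow & & \downarrow & & \\ \cdots & \to & SH^{-,S^1}(\lambda_1) & \to & SH^{S^1}(\lambda_1) & \to & SH^{+,S^1}(\lambda_1) & \to & \cdots \end{array}$$
in which the left column is an isomorphism by Theorem~\ref{thm:SH-} (it depends only on the topology of $W$) and the middle column is an isomorphism by the previous paragraph. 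The five lemma then forces the right column to be an isomorphism, proving the theorem.

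The technical heart of the argument, and the step I expect to be most delicate, is the construction of continuation maps between Floer theories defined for two genuinely different symplectic forms on the same underlying smooth manifold. Uniform energy bounds, the absence of bubbling, and stability of the parametrized Fredholm theory of \cite{BO:S1_Fredholm} must all be re-examined in the presence of varying symplectic data; the (CF) hypothesis is essential here because simple connectivity eliminates problematic spectral flow from non-trivial homotopy classes of loops, and torsion of $c_1$ prevents the grading mismatches that would otherwise obstruct the five-lemma step.
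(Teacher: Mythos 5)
The paper does not actually prove this statement: it is quoted from Gutt \cite{Gutt15}, so there is no internal proof to measure your argument against. Judged on its own terms, your architecture (invariance of $SH^{S^1}$ plus topological invariance of $SH^{-,S^1}$ from Theorem~\ref{thm:SH-}, then the five lemma on the ladder of Viterbo sequences) is a standard and legitimate way to pass from the full theory to the $+$ part, but it has a genuine gap at its central step. The claim that $SH^{S^1}(W)$ is invariant under an \emph{arbitrary} change of Liouville form on a fixed smooth $W$ is false: a fixed smooth domain can carry Liouville (even Stein) structures that are not deformation equivalent and are distinguished by their symplectic homology (McLean's exotic Stein structures on balls and cotangent-type domains are the standard examples), and since $SH^{-,S^1}$ is purely topological, the long exact sequence forces $SH^{+,S^1}$ to differ as well. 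So the middle column of your ladder cannot be an isomorphism in general, and your proposed workaround --- interpolating the symplectic data only on the cylindrical end --- does not address the real obstruction, which lives in the interior: the two symplectic forms need not be connected through Liouville forms at all. The theorem can only mean, and is only used in the paper to mean, invariance under a \emph{homotopy} of Liouville forms; in the proof of Lemma~\ref{lemma:equi_SH_invariant} the two forms $\tilde\lambda$ and $\tilde\lambda+d(\rho f)$ are joined by the linear path $\tilde\lambda+s\,d(\rho f)$, along which the symplectic form is constant.

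Once you restrict to a homotopy $\lambda_t$ of Liouville forms, the delicate step you flag (Floer theory for varying symplectic data) disappears: the Liouville flows give a Moser-type exact symplectomorphism between the completions $(\bar W,d\bar\lambda_0)$ and $(\bar W,d\bar\lambda_1)$, so one never has to run the parametrized Fredholm theory of \cite{BO:S1_Fredholm} with a varying symplectic form, only to track the effect on the action filtration (typically by subdividing the homotopy so that consecutive domains are nested via the Liouville flow and using transfer/continuation maps). Two further points need attention in your five-lemma step: first, Theorem~\ref{thm:SH-} gives an abstract identification of each $SH^{-,S^1}(\lambda_i)$ with $H_{*+n}(W,\partial W)\otimes H_*(\bbc P^\infty)$, but you must check that the continuation map is intertwined with these identifications (equivalently, that the isomorphism of Theorem~\ref{thm:SH-} is natural for continuation maps); second, continuation maps are action-nonincreasing only in one direction, so the filtered quasi-inverse has to be produced at the level of direct limits rather than for a fixed pair of Hamiltonians. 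With the statement corrected to deformation invariance and these points supplied, your route goes through.
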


\subsection{Mean Euler characteristic}
From now on, we will always use $\bbq$-coefficients for equivariant symplectic homology.
Denote the Betti numbers of the $+$-part of equivariant symplectic homology of a convenient Liouville domain $(W,\lambda)$ by $sb_i:=\rk SH^{+,S^1}_i(W)$.
We define the {\bf mean Euler characteristic} of $(W,\lambda)$ as
\begin{equation}
\label{eq:def_mec}
\chi_m(W) = 
\frac{1}{2} 
\left( 
\liminf_{N \to \infty}  \frac{1}{N} \sum_{i=-N}^N (-1)^i sb_i(W) \,+\,
\limsup_{N \to \infty}  \frac{1}{N} \sum_{i=-N}^N (-1)^i sb_i(W)
\right) 
\end{equation}
if this number exists.
This is the case if there is uniform bound on the Betti numbers $sb_i$.

\subsection{Autonomous Hamiltonians, spectral sequence and contact invariants}
To compute equivariant symplectic homology in practice it is useful to work with autonomous Hamiltonians.
The $1$-periodic orbits of such an autonomous Hamiltonian are typically degenerate, so not admissible.
We will consider an autonomous Hamiltonian with the following type of degeneracy.
\begin{definition}
We say that the $1$-periodic orbits of an autonomous Hamiltonian $H$ are of {\bf Morse-Bott type} if
\begin{itemize}
\item the {\bf critical set} or {\bf critical manifold} $C=\{ x\in W~|~Fl^{X_H}_1(x)=x \}$ forms a (possibly disconnected) compact submanifold of $W$ without boundary; here $Fl^{X}_t$ denotes the time-$t$ flow of a vector field $X$.
\item the restriction of the linearized return map to the normal bundle of each connected component $\Sigma$ of $C$ is non-degenerate, i.e.~the linear map
$$
T_x Fl^{X_H}_1|_{\nu(\Sigma)}-\id |_{\nu(\Sigma)}
$$
is invertible for all $x\in \Sigma$.
\end{itemize}
\end{definition}
Now consider an autonomous Hamiltonian $H$ that only depends on the interval coordinate on the symplectization end, i.e.~$H|_{[1,\infty[\times \partial W}=h(r)$.
Then
$$
X_H=-h'(r)R_{\eta},
$$
so $1$-periodic orbits of $X_H$ correspond to Reeb orbits of $\eta:=\lambda|_{\partial W}$ with period $h'(r)$ (note that the $r$-coordinate is preserved under the flow of $X_H$).
Suppose furthermore that $\Sigma$ is a Morse-Bott manifold consisting of $1$-periodic orbits of $X_H$.
In the contact manifold $(\partial W,\lambda|_{\partial W} )$ this Morse-Bott manifold $\Sigma$ corresponds to a Morse-Bott manifold of $h'(r)$-periodic Reeb orbits.
Given an $h'(r)$-periodic Reeb orbit $x$, choose a capping disk $d_x$ in $\partial W$ together with a unitary trivialization of the contact structure $({ \mathcal D},J|_{{ \mathcal D}},d\lambda|_{{ \mathcal D}})$.
With respect to such a unitary trivialization, the restriction of the linearized Reeb flow to ${ \mathcal D}$ gives a path of symplectic matrices.
Define the Robbin-Salamon index of $x$ as the Robbin-Salamon index of this path of matrices.

\begin{remark}
This index is independent of the choice of $x$ in $\Sigma$. Moreover, it does not depend on the choice of the trivialization and the capping disk due to our assumption that $c_1(TW)=0$.
\end{remark}

Define the shift of $\Sigma$ as
$$
shift(\Sigma)=\mu_{RS}(\Sigma)-\frac{1}{2} (\dim \Sigma - 1),
$$
where $\mu_{RS}(\Sigma)$ is the Robbin-Salamon index of the \emph{Reeb} flow.
\subsubsection{Periodic flow}
Suppose now that $(M,\eta)$ is a quasi-regular contact manifold with a convenient filling $W$, and denote the periods of the Reeb flow by $T_1<\ldots<T_k$, where $T_k$ is the period of a principal orbit.

Since the action spectrum has the form $T_1\bbn \cup \ldots \cup T_k\bbn $, we can find $a\in \bbr^+$ such that each interval $[pa,(p+1)a]$ contains at most one critical value of $\mathcal A$ for every $p \in \bbn$.
Furthermore, we can assume that this critical value is an interior point of $[pa,(p+1)a]$.
Denote the set of critical manifolds with critical value in the interval $[pa,(p+1)a]$ by $C(p)$.

A Morse-Bott spectral sequence computing equivariant symplectic homology with more general, but more technical assumptions was proved in Appendix B of \cite{KwvKo13}.
The following, simplified version suffices for this paper.
\begin{theorem}[Morse-Bott spectral sequence for periodic flows]
\label{thm:SS_equivariant_homology}
Let $(W,\omega=d\lambda)$ be a convenient Liouville domain satisfying the following.
\begin{itemize}
\item $TW$ is trivial as a symplectic vector bundle.
\item The Reeb flow of $\partial W$ is periodic with minimal periods $T_1,\ldots, T_k$, where $T_k$ is the common period, i.e.~the period of a principal orbit.
\item There is a compatible complex structure on the contact manifold $(\partial W,\lambda_{\partial W})$ such that the linearized Reeb flow is complex linear.
\end{itemize}
Then there is also a spectral sequence converging to $SH^{+,S^1}(W;\bbq)$. Its $E^1$-page is given by
\begin{equation}
\label{eq:MB_SS_SH+S1}
E^1_{pq}(SH^{+,S^1})=
\begin{cases}
\bigoplus_{\Sigma \in C(p) } H_{p+q-shift(\Sigma)}^{S^1}(\Sigma;\bbq) & p>0 \\
0 & p\leq 0.
\end{cases}
\end{equation}
\end{theorem}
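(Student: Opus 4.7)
The plan is to adapt the Morse--Bott spectral sequence construction of Appendix~B in \cite{KwvKo13} to this simpler setting, where periodicity of the Reeb flow together with the symplectic triviality of $TW$ and the complex linearity of the linearized Reeb flow streamline both the choice of perturbation data and the computation of the grading shifts. First I would fix $N$ and a slope $S$ avoiding the Reeb period spectrum and build an admissible parametrized Hamiltonian $H\colon S^1 \times \bar W \times S^{2N+1} \to \bbr$ that on the cylindrical end depends only on the interval coordinate, so that $X_H = -h'(r) R_\eta$. The $1$-periodic orbits of $X_H$ on a level $\{r\} \times \partial W$ then organize into Morse--Bott families indexed by the closed Reeb orbits of period $h'(r)$, i.e.~by the critical manifolds $\Sigma$ of Reeb orbits of period less than $S$. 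A Bourgeois--Oancea type $S^1$-invariant Morse perturbation on the Borel construction of each $\Sigma$ turns these degenerate generators into genuine non-degenerate critical points of $\mathcal{A}^N$, and the equivariant Floer differential then counts cascade configurations as in \cite{BoOa13a,BO:S1_Fredholm}.

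Next, I would introduce the action filtration on the resulting parametrized Floer complex. Because each interval $[pa,(p+1)a]$ contains at most one critical value of $\mathcal{A}^N$, coming from the family $C(p)$, the associated graded piece of the filtered complex at level $p$ is the Morse complex of the Borel construction of the critical manifolds in $C(p)$, which yields
\[
E^1_{pq} = \bigoplus_{\Sigma \in C(p)} H^{S^1}_{p+q-\mathrm{shift}(\Sigma)}(\Sigma;\bbq)
\]
for $p>0$, and $E^1_{pq} = 0$ for $p \leq 0$ since on the $+$-part all generators have strictly positive action by construction. The grading shift is extracted exactly as in Section~\ref{sec:grading}: the parametrized Robbin--Salamon index of a generator built from $\Sigma$ equals $\mu_{RS}(\Sigma)$ plus the Morse index of the auxiliary perturbation on $\Sigma$, minus the normal correction $\tfrac{1}{2}(\dim\Sigma-1)$. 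The hypotheses that $TW$ is trivial as a symplectic vector bundle and that the linearized Reeb flow is complex linear remove any ambiguity in choosing trivializations along capping disks and guarantee that this shift is well-defined and depends only on the component $\Sigma$.

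Finally, I would check compatibility with the two direct limits defining $SH^{+,S^1}(W;\bbq)$. The continuation maps of Theorem~\ref{thm:continuation}, obtained from monotone homotopies in $(H,N)$, respect the action filtration up to a controlled error that vanishes in the limit, so the spectral sequences associated to fixed data assemble into a single spectral sequence converging to $SH^{+,S^1}(W;\bbq)$ with the $E^1$-page given by \eqref{eq:MB_SS_SH+S1}.

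The main obstacle, exactly as in \cite{KwvKo13}, lies in the transversality and gluing analysis of the Morse--Bott equivariant Floer cascades: one must show that the $S^1$-invariant auxiliary data can be chosen generically so that the parametrized Floer trajectories and their cascade limits form smooth oriented moduli spaces of the expected dimension, and that the boundary operator is well-defined and squares to zero. The periodicity of the Reeb flow helps considerably here, since the critical manifolds decompose as well-separated finite-dimensional pieces with disjoint action windows, so the abstract Morse--Bott equivariant machinery of Bourgeois--Oancea applies essentially verbatim once the complex linear structure on $\mathcal{D}$ is used to set up coherent unitary trivializations.
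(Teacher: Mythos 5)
Your outline is essentially the intended argument: the paper itself offers no proof of this theorem, instead presenting it as a simplified special case of the Morse--Bott spectral sequence established in Appendix~B of \cite{KwvKo13}, and your construction via the action filtration on the parametrized Floer complex, with the $E^1$-page identified as the equivariant homology of the critical manifolds and the hard transversality/gluing analysis deferred to Bourgeois--Oancea, is a faithful reconstruction of that argument specialized to the periodic case. Your observation that the triviality of $TW$ and the complex linearity of the linearized Reeb flow serve to trivialize the local coefficient system matches exactly the remark the paper makes immediately after the theorem statement.
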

\begin{remark}
The first condition allows for a simple construction of the local coefficient system, and the third condition ensures that the local coefficient system is trivial.

Note that the natural filling of a Brieskorn manifold by its smoothed singularity satisfies all of the assumptions.
\end{remark}

\begin{lemma}[Equivariant symplectic homology as a contact invariant]
\label{lemma:equi_SH_invariant}
Assume that $(M,\eta)$ is a simply-connected quasi-regular contact manifold admitting a convenient filling $(W,\lambda)$.
Suppose furthermore that the spectral sequence~\eqref{eq:MB_SS_SH+S1} is {\bf lacunary}, meaning that
\begin{itemize}
\item for all $i>0$ and all $p,q\in \bbz$, the following holds. If $E^1_{p,q}\neq 0$, then $E^1_{p-i,q+i-1}=0$.
\end{itemize}
Under these assumptions $SH^{+,S^1}(W)$ does not depend on the choice of the convenient filling $W$. 
\end{lemma}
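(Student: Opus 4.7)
The approach is to show that under the lacunarity hypothesis the Morse-Bott spectral sequence of Theorem~\ref{thm:SS_equivariant_homology} degenerates at its $E^1$ page, and that the $E^1$ page itself depends only on the contact form $\eta$ on $M=\partial W$, not on the particular convenient filling.

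First I would verify that lacunarity forces collapse. The differential $d^r\colon E^r_{p,q}\to E^r_{p-r,q+r-1}$ is defined for $r\geq 1$. If $E^r_{p,q}\neq 0$, then since $E^r$ is a subquotient of $E^1$ we have $E^1_{p,q}\neq 0$. By the lacunarity hypothesis applied with $i=r>0$ this forces $E^1_{p-r,q+r-1}=0$, hence $E^r_{p-r,q+r-1}=0$, so $d^r=0$. Thus $E^\infty=E^1$, and because we work over $\bbq$ we obtain an isomorphism of graded $\bbq$-vector spaces
$$SH^{+,S^1}_n(W;\bbq)\cong \bigoplus_{p+q=n} E^1_{p,q}.$$

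Next I would argue that the right-hand side of~\eqref{eq:MB_SS_SH+S1} depends only on $(M,\eta)$. The critical manifolds $\Sigma\in C(p)$ and their $S^1$-equivariant homologies are determined by the closed Reeb orbits of $\eta$ on $M$, which are intrinsic to the contact form; the grouping by the intervals $[pa,(p+1)a]$ uses only the action spectrum of the Reeb flow on $M$. The grading shift $shift(\Sigma)=\mu_{RS}(\Sigma)-\tfrac{1}{2}(\dim\Sigma -1)$ uses the Robbin-Salamon index of the linearized Reeb flow along $\Sigma$ with respect to a unitary trivialization of $\mathcal D$ along a capping disk in $M$. Since $M$ is simply-connected and $c_1(\mathcal D)$ is torsion, this index is independent of the choices and is intrinsic to $(M,\eta)$. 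Combining these two steps, for any two convenient fillings $W_1, W_2$ of $(M,\eta)$ the spectral sequence for each collapses at the same $E^1$ page, so $SH^{+,S^1}(W_1)\cong SH^{+,S^1}(W_2)$.

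The main obstacle I anticipate is reconciling the grading conventions used by different fillings with the intrinsic description on $M$. The gradings in the construction of $SH^{+,S^1}(W_i)$ are set up using capping disks in $W_i$ together with symplectic trivializations of $TW_i$ along them, whereas the invariant description uses only data on the boundary. One must check that restricting to the boundary via the splitting $TW|_M\cong \langle X,R_\eta\rangle\oplus \mathcal D$ identifies the two Robbin-Salamon indices up to a filling-independent shift; this is guaranteed rationally by the hypotheses that $W$ is convenient, that $M$ is simply-connected, and that $c_1(\mathcal D)$ is torsion, so that trivializations of $\mathcal D$ over capping disks in $M$ exist and are unique up to homotopy.
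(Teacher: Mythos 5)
Your overall strategy is the same as the paper's: lacunarity kills every differential $d^r$, $r\geq 1$, so the spectral sequence of Theorem~\ref{thm:SS_equivariant_homology} degenerates at $E^1$, and over $\bbq$ the abutment is the direct sum of the $E^1$-terms, which are read off from the Reeb dynamics on the boundary. Your degeneration argument and your discussion of the index/grading conventions are fine. What you gloss over, and what occupies most of the paper's actual proof, is that the Floer-theoretic package producing the spectral sequence for a second filling $\tilde W$ is built from its Liouville form $\tilde\lambda$, not merely from $(M,\eta)$: a priori $\tilde\lambda$ and $\lambda$ restrict to the cylindrical end as two different primitives of $d(r\eta)$, differing by a closed $1$-form. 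The paper uses simple-connectedness of $M$ to write this difference as $df$, interpolates with a cut-off function to produce a Liouville form on $\tilde W$ that literally agrees with $\lambda$ near infinity, and then invokes Theorem~\ref{thm:SH_S1_sympl_def_invar} (invariance of $SH^{+,S^1}$ under change of Liouville form) to conclude that this modification does not change the homology. Only after this normalization is it legitimate to say that the two $E^1$-pages of~\eqref{eq:MB_SS_SH+S1} are identified. Your proposal asserts directly that ``the $E^1$ page depends only on $(M,\eta)$''; that is true of the Reeb-orbit data and indices, but without the normalization step (or some equivalent appeal to deformation invariance) you have not justified that the spectral sequence attached to $(\tilde W,\tilde\lambda)$ has this $E^1$-page. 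This is a small but genuine omission; the rest of your argument matches the paper.
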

\begin{proof}
Suppose that $\tilde W$ is another convenient filling.
We claim that we can choose Liouville forms on $W$ and $\tilde W$ that coincide on the cylindrical ends $M\times [r_0,\infty[$.
To see this, note that if $\lambda$ and $\tilde \lambda$ are the restrictions of Liouville forms to the cylindrical end $M\times [r_0,\infty[$, then $d(\lambda-\tilde \lambda)=0$.
By the assumption that $M$ is simply-connected, we see that $\lambda-\tilde \lambda=df$.
Then define a new Liouville form on $M\times [r_0,\infty[$ by 
$$
\lambda':=\tilde \lambda+d(\rho \cdot f).
$$
Here $\rho$ is a cut-off function that vanishes on a neighborhood of $M\times \{ r_0\}$ and is equal to $1$ on the set $M\times [r_1,\infty[$.
This form $\lambda'$ extends to a globally defined Liouville form on $\tilde W$.

By Theorem~\ref{thm:SH_S1_sympl_def_invar}, different choices of Liouville forms yield isomorphic homologies.
With respect to the new Liouville form, the $E^1$-page of the Morse-Bott spectral sequence~\eqref{eq:MB_SS_SH+S1} computing $SH^{+,S^1}(W)$ is isomorphic to the $E^1$-page of the spectral sequence computing $SH^{+,S^1}(\tilde W)$.
The lacunarity assumption of the lemma then tells us that the spectral sequence abuts in both cases at the $E^1$-page.
Hence $SH^{+,S^1}(W)\cong SH^{+,S^1}(\tilde W)$.
\end{proof}

\subsection{Explicit formulae for the mean Euler characteristic}
In this section we follow \cite[Proposition 5.20]{KwvKo13} to get an explicit formula for the mean Euler characteristic (mec).
Suppose that $(M,\eta)$ is a quasi-regular contact manifold, and denote the periods of the Reeb flow by $T_1<\ldots<T_k$, where $T_k$ is the period of a principal orbit.
Define the function
\begin{equation}
\label{eq:phi_function}
\phi_{T_i;T_{i+1},\ldots,T_k} \,=\,
\# \{ a \in \bbn \mid aT_i < T_k \text{ and } a T_i \notin T_j \bbn \text{ for } j=i+1,\ldots, k \}
.
\end{equation}
We use the convention that $\phi_{T_k;\emptyset}=1$.
For $T=T_1,\ldots,T_k$, define
$$
\Sigma_T=\{
y\in M~|~Fl^{R_\eta}_{T}(y)=y
\}
.
$$
The equivariant Euler characteristic, i.e.~the Euler characteristic of the $S^1$-equivariant homology $H^{S^1}(\Sigma;\bbq)$, will be denoted by $\chi^{S^1}(\Sigma)$.
This equivariant Euler characteristic equals the Euler characteristic of the quotient if the circle action has no fixed points, which is the case here.
\begin{proposition}
\label{prop:mean_euler_S^1-orbibundle}
Let $(M,\eta)$ be a simply-connected quasi-regular contact manifold admitting a convenient filling $(W,d\lambda)$.
Suppose furthermore that the following conditions hold.
\begin{itemize}
\item The restriction of the tangent bundle to the symplectization of $M$, $T(\bbr \times M)|_{M}$, is trivial as a symplectic vector bundle.
\item There is a compatible complex structure on the contact manifold $M$ such that the linearized Reeb flow is complex linear.
\end{itemize}
Let $\mu_P:=\mu(M)$ denote the Maslov index of a principal orbit of the Reeb action.
If $\mu_P\neq 0$ then the following hold.
\begin{itemize}
\item The contact manifold $(M,\eta)$ is index-positive if $\mu_P>0$ and index-negative if $\mu_P<0$.
\item The mean Euler characteristic is an invariant of the contact structure and satisfies the following formula,
\begin{equation}
\label{eq:MEC_general}
\chi_m(W)=\frac{\sum_{i=1}^k (-1)^{\mu(\Sigma_{T_i})-\frac{1}{2}(\dim (\Sigma_{T_i})-1) } \phi_{T_i;T_{i+1},\ldots T_k} \chi^{S^1}(\Sigma_{T_i})}{|\mu_P|}.
\end{equation}
\end{itemize}
\end{proposition}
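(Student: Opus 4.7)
The plan is to apply the Morse--Bott spectral sequence of Theorem~\ref{thm:SS_equivariant_homology} to an autonomous Hamiltonian whose Reeb-end dynamics match the periodic Reeb flow on $M$; the hypotheses of that theorem are built into the standing assumptions. In the quasi-regular case the action spectrum is $T_1\bbn \cup \cdots \cup T_k\bbn$, and for every action value $L=aT_i$ the associated Morse--Bott manifold of $1$-periodic orbits is (a copy of) some $\Sigma_{T_i}$. The first step is to verify the lacunary hypothesis required by Lemma~\ref{lemma:equi_SH_invariant}: since $\mu_P\neq 0$, the Robbin--Salamon indices of successive iterates of any closed orbit form an arithmetic progression whose common difference divides $|\mu_P|$. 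Combined with the complex-linearity assumption on the linearized Reeb flow, which splits it into a sum of honest $U(1)$-rotations so that Maslov contributions add under iteration, this forces the $E^1$-columns in~\eqref{eq:MB_SS_SH+S1} to be concentrated in isolated degrees. The spectral sequence therefore collapses at $E^1$, and $sb_i$ equals the rank of the total degree-$i$ part of $E^1$.

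The second step is to enumerate the surviving $E^1$-entries. For each $i$ and each multiplicity $a\in\bbn$, the $aT_i$-iterate of a $\Sigma_{T_i}$-family produces one copy of $H^{S^1}_{*-\text{shift}}(\Sigma_{T_i};\bbq)$, with shift $\text{shift}(\Sigma_{T_i})=\mu_{RS}(aT_i\text{-orbit})-\tfrac12(\dim\Sigma_{T_i}-1)$. To avoid double-counting, one must discard those $(i,a)$ for which $aT_i\in T_j\bbn$ with $j>i$, since the corresponding Morse--Bott manifold is already counted as an iterate of a period-$T_j$ family. Within any fundamental action window of length $T_k$, the number of multiplicities that survive is precisely $\phi_{T_i;T_{i+1},\ldots,T_k}$ from~\eqref{eq:phi_function}, and in the alternating sum each surviving copy contributes $(-1)^{\mu(\Sigma_{T_i})-\frac12(\dim\Sigma_{T_i}-1)}\,\chi^{S^1}(\Sigma_{T_i})$.

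The third step assembles the mean Euler characteristic. Over any range of indices of length $|\mu_P|$, the iteration pattern of Maslov indices completes exactly one cycle, so the increment of $\sum_i(-1)^i sb_i$ over such a window equals
\[
\sum_{i=1}^{k} (-1)^{\mu(\Sigma_{T_i})-\frac12(\dim\Sigma_{T_i}-1)}\,\phi_{T_i;T_{i+1},\ldots,T_k}\,\chi^{S^1}(\Sigma_{T_i}).
\]
The lacunary property guarantees that the Betti numbers are uniformly bounded, hence both the $\liminf$ and the $\limsup$ in~\eqref{eq:def_mec} exist and agree; dividing by the window size $|\mu_P|$ yields the claimed formula. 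Index-positivity when $\mu_P>0$ (resp.\ -negativity when $\mu_P<0$) is immediate, since the Maslov indices of all generators tend to $+\infty$ (resp.\ $-\infty$). The contact invariance of $\chi_m$ then follows from Lemma~\ref{lemma:equi_SH_invariant}, whose lacunary hypothesis was verified in step one.

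The main technical obstacle will be the careful book-keeping of Robbin--Salamon indices under iteration: one must verify that the shift formula $\mu_{RS}-\tfrac12(\dim-1)$ indeed outputs the exponents appearing in the statement, that the enumeration by $(i,a)$ really produces disjoint critical manifolds once the $\phi$-function is applied (with the convention $\phi_{T_k;\emptyset}=1$ accounting for principal orbits themselves), and that the window-by-window accounting commutes with the limit in~\eqref{eq:def_mec}. This is essentially the computation of \cite[Proposition 5.20]{KwvKo13} adapted to the slightly more general setting here, so I would follow that template with only the minor modifications needed to accommodate the current hypotheses.
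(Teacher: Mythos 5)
The paper itself offers no proof of this proposition --- it is imported verbatim from \cite[Proposition 5.20]{KwvKo13} --- and your outline follows the same Morse--Bott bookkeeping that underlies that result. However, your first step contains a genuine error. You claim that $\mu_P\neq 0$ together with complex linearity of the linearized Reeb flow forces the spectral sequence~\eqref{eq:MB_SS_SH+S1} to be lacunary, hence to collapse at $E^1$. This is false. The Robbin--Salamon indices of iterates of the \emph{exceptional} orbits are given by formulae such as~\eqref{eq_Maslov_index_Brieskorn}, which involve floor functions and are not arithmetic progressions; moreover distinct strata $\Sigma_{T_i}$ can contribute $E^1$-entries in adjacent total degrees. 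Lacunarity genuinely fails for many of the Brieskorn links treated in this paper --- this is precisely why it appears as a separate hypothesis in Lemma~\ref{lemma:equi_SH_invariant} rather than as a consequence of $\mu_P\neq 0$, and why the authors verify it by hand for the two specific links in Lemma~\ref{lemma:examples_differ}. Deducing contact invariance of $\chi_m$ from Lemma~\ref{lemma:equi_SH_invariant} therefore does not work under the stated hypotheses.

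The good news is that collapse of the spectral sequence is not needed for either conclusion, so the gap is repairable. For the formula: an alternating sum of ranks is unchanged by passing to the homology of a differential, so the windowed Euler characteristic read off from the $E^1$-page already agrees with the one computed from the abutment $SH^{+,S^1}(W)$, whether or not the sequence degenerates; uniform boundedness of the $sb_i$ follows not from lacunarity but from the fact that only finitely many strata contribute per index window, since indices grow linearly in the action with slope governed by $\mu_P/T_k$. For contact invariance: this must come from index positivity (resp.\ negativity) itself --- the point being that under this condition the relevant part of $SH^{+,S^1}$ is independent of the choice of convenient filling, because Floer cylinders of the pertinent index cannot be absorbed into the filling --- and not from the lacunarity lemma. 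With these two corrections, the rest of your enumeration (the $\phi$-function discarding iterates already absorbed into larger strata, the shift $\mu_{RS}(\Sigma)-\tfrac{1}{2}(\dim\Sigma-1)$, and division by the window length $|\mu_P|$) is exactly the computation of \cite[Proposition 5.20]{KwvKo13}.
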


\subsubsection{Index formulae}
We follow the notation from \cite{KwvKo13}.
Suppose that $2\pi \cdot T$ is the minimal period of a periodic Reeb orbit in the Brieskorn manifold $L(a_0,\ldots,a_n)$.
Write $I=\{0,1,\ldots,n\}$ and $I_T=\{ j\in I~|~a_j \text{ divides }T \}$.
We obtain an entire Brieskorn submanifold $K(I_T):=L(\{ a_j\}_{j\in I_T})$ consisting of periodic orbits, which forms a Morse-Bott submanifold.

Then the Robbin-Salamon index of an $N$-fold cover is given by the following formula
\begin{equation}
\label{eq_Maslov_index_Brieskorn}
\mu(N\cdot K(I_{T})\,)=2\sum_{j \in I_{T}} \frac{NT}{a_j}+
2\sum_{j \in I-I_{T}} \bigg\lfloor \frac{NT}{a_j} \bigg\rfloor+\#(I-I_{T})
-2NT,
\end{equation}
provided that $a_j$ does not divide $NT$ for $j\in I-I_T$.
If the latter happens, the $N$-fold cover is part of a larger Morse-Bott submanifold of periodic orbits.

A principal orbit has period equal to $2\pi\cdot \lcm_i a_i$, so we see that the index of a principal orbit is given by
\begin{equation}
\label{eq:principal_orbit}
\mu_P=2\lcm_{j\in I} a_j \, \left( \sum_{j=0}^n\frac{1}{a_j} \, -1 \right) .
\end{equation}

\begin{remark}
We note that~\eqref{eq:principal_orbit} is twice the Fano index as defined in Definition 4.4.24 of \cite{BG05}. See also the proof of Theorem~11.7.8 in \cite{BG05}. A positive Maslov index of a principal orbit is hence equivalent to the quotient orbifold being log Fano.
\end{remark}

\section{Some formulae for the mean Euler characteristic for BP links}

Note that all conditions hold for Brieskorn manifolds of dimension $5$ or greater.
We work out all necessary ingredients.
The Maslov index of a principal orbit was already given in formula~\eqref{eq:principal_orbit}.
For each orbit space $\Sigma_T$ the Euler characteristic $\chi^{S^1}(\Sigma_T)$ can be computed by combining formula (3.4) and Theorem 3.10 from \cite{KwvKo13}. We have
\begin{equation}
\label{eq:rk_middle_dim_hom}
\rk H_{n-2}(L(a_0,\ldots,a_n);\bbq)
=\sum_{I_t \subset I_s} (-1)^{s-t}\frac{\prod_{i\in I_t}a_i}{\lcm_{j\in I_t}a_j}.
\end{equation}
Here $I_s=\{ 0,1,\ldots,n \}$ and $I_t$ denotes a subset with $t$ elements.

\subsubsection{A detailed example}
We work out one example in detail, and refer for the rest to the tables below.
Consider $L(2,3,4,4+12k)$.
The period of a principal orbit is $T=\lcm(2,3,4,4+12k)=3\cdot(4+12k)$.
By formulae of Milnor-Orlik \cite{MiOr70} and Randell \cite{Ran75} we find that $\chi^{S^1}(L(2,3,4,4+12k)\,)=\chi(L(2,3,4,4+12k)/S^1)=3$.
Note that this is also obvious from the rational Gysin sequence and the fact that $L(2,3,4,4+12k)$ is a rational homology sphere.

The exceptional orbits have periods dividing $3\cdot(4+12k)$. These periods are $12$ for $L(2,3,4)$, $6$ for $L(2,3)$ and $4$ for $L(2,4)$.
We can again use Milnor-Orlik to find the relevant Euler characteristics, but may also observe that $L(2,3,4)$ is log Fano and hence the quotient must be homeomorphic to $S^2$, which has $\chi=2$.
The Brieskorn manifolds $L(2,3)$ is the $2,3$-torus knot, so $\chi=1$ and $L(2,4)$ is a torus link with two components.

We now compute the values of the $\phi$-function, defined in~\eqref{eq:phi_function}.
The first one equals by definition $\phi_{3\cdot(4+12k);\emptyset}=1$.
We work backwards to get the others
\[
\begin{split}
\phi_{12;3\cdot(4+12k)}&=3\cdot(4+12k)/12-1=1+3k-1=3k.
\\
\phi_{6;12,3\cdot(4+12k)}&=3\cdot(4+12k)/6-1-3k=2+6k-1-3k=1+3k.
\\
\phi_{4;12,3\cdot(4+12k)}&=3\cdot(4+12k)/4-1-3k=1+9k-1-3k=6k.
\end{split}
\]
Finally the Maslov index of a principal orbit is
$$
2\cdot 3\cdot(4+12k)\cdot(\frac{1}{2}+\frac{1}{3}+\frac{1}{4}+\frac{1}{3\cdot(4+12k)}-1)
=6+8k.
$$
We collect the above data in a table.
\[
\begin{tabular}{llll}
Orbit space &  period & $\chi^{S^1}$ & frequency (in one period of $E^1$) \\
\hline
$L(2,3,4,4+12k)$ & $2^2\cdot 3\cdot(1+3k)$ & 3 & 1 \\
$L(2,3,4)$ & $12$ & $2$ & $3k$ \\
$L(2,3)$ & $6$ & 1 & $1+3k$ \\
$L(2,4)$ & $4$ & 2 & $6k$ \\
\end{tabular}
\]
Combine this with the proposition and we find
$$
\chi_m=\frac{6k\cdot 2+(1+3k)\cdot 1+3k\cdot 2+1\cdot 3}{6+8k}
=\frac{4+21k}{6+8k}.
$$
Note that the above computation does not apply to the case $k=0$. In the latter case, there are only three orbits spaces, namely $L(2,3,4,4)$, $L(2,3)$ and $L(2,4)$.
The formula for the mean Euler characteristic is still correct though.
The others follow the same procedure.

In Sections 6.1-3 below we make use of the classification of positive BP links in dimension 5 given in Tables B.4 of \cite{BG05}.

\subsection{Positive BP links diffeomorphic to $S^5$}
All positive BP links on $S^5$ were given in Table B.4.3 of \cite{BG05}. As mentioned in the Introduction, it was already known from \cite{Ust99,KwvKo13,Gutt15} that $|\pi_0(\gM^c_{+,0}(S^5))|=\aleph_0$, so here we are content to illustrate one infinite series where the mean Euler characteristic is given. The notational convention employed here as well as in the next section is to present a column of three entries, the first of which gives the manifold, the second the Brieskorn link, and third the mean Euler characteristic. We have

\begin{equation}
\begin{alignedat}{2}
S^5 &\quad
L(2,3,5,1+30k) &\quad
\frac{31+270k}{62+60k}. \\
\end{alignedat}
\end{equation}

Notice that it follows from Theorem \ref{Lichbd} that $L(2,3,5,1+30k)$ cannot admit an SE metric when $k>1$. For $k=1$ it is unknown whether it has an SE metric.

\begin{example}\label{S5SEex2}
We continue our discussion of Example \ref{S5SEex} concerning SE metrics on $S^5$. We have computed the mean Euler characteristic for these families and presented them in an Excel table which can be found in \url{http://www.math.snu.ac.kr/~okoert/tools/BP5_list_full_SH.xls}. The table also contains the {\it number of moduli}, that is, the complex dimension of the local moduli space computed from Proposition \ref{hyslocmod}. Notice that our Excel table shows that there are 7 pairs where the mean Euler characteristic does not distinguish the contact structures. In the case of $L(2,3,7,22)$ and $L(3,3,4,7)$ the mean Euler characteristics coincide, but in Lemma~\ref{lemma:examples_differ} below we show that their contact structures are not isomorphic. In the remaining 6 pairs we cannot say whether their contact structures are inequivalent or not. Hence, we see that there are at least $76$ inequivalent contact structures on $S^5$ containing an SE metric implying $|\pi_0(\gM^{SE}(S^5))|\geq 76.$ This proves Theorem \ref{s5SEthm} of the Introduction.
\end{example}

\begin{lemma}
\label{lemma:examples_differ}
The contact manifolds $L(2,3,7,22)$ and $L(3,3,4,7)$ are not contactomorphic.
\end{lemma}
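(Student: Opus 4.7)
The plan is to refine the mean Euler characteristic by working directly with the $\bbz$-graded $S^1$-equivariant symplectic homology $SH^{+,S^1}$ of the natural Stein filling of each link. Since the two mean Euler characteristics agree, we cannot distinguish the contact structures by a single number; however, the ranks $sb_i$ of $SH^{+,S^1}$ form a much finer invariant, and we will show they differ in some degree.

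First I would verify the hypotheses needed so that $SH^{+,S^1}$ is genuinely a contact invariant of each link. Both $L(2,3,7,22)$ and $L(3,3,4,7)$ are simply-connected Brieskorn manifolds of dimension $5$, naturally Stein filled by their smoothed singularities, and satisfy the hypotheses of Theorem~\ref{thm:SS_equivariant_homology}. Hence the Morse--Bott spectral sequence~\eqref{eq:MB_SS_SH+S1} is available in both cases. I would then check that this $E^1$-page is lacunary in the sense of Lemma~\ref{lemma:equi_SH_invariant}; for a BP link this reduces to checking that the shifts $\mu(\Sigma) - \tfrac{1}{2}(\dim \Sigma - 1)$ of all sub-Brieskorn Morse--Bott orbit manifolds $\Sigma = K(I_T)$ have the same parity, which follows from the index formula~\eqref{eq_Maslov_index_Brieskorn} applied to each subset $I_T \subseteq \{0,1,2,3\}$. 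Once this is done, Lemma~\ref{lemma:equi_SH_invariant} gives that $SH^{+,S^1}$ is invariant under contactomorphism, so any discrepancy in graded ranks will prove the two links are not contactomorphic.

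Next I would carry out the concrete computation. From~\eqref{eq:principal_orbit}, the Maslov indices of principal orbits are
$$
\mu_P(L(2,3,7,22)) \;=\; 2 \cdot 462 \cdot \left( \tfrac{1}{2} + \tfrac{1}{3} + \tfrac{1}{7} + \tfrac{1}{22} - 1 \right) \;=\; 20,
$$
$$
\mu_P(L(3,3,4,7)) \;=\; 2 \cdot 84 \cdot \left( \tfrac{1}{3} + \tfrac{1}{3} + \tfrac{1}{4} + \tfrac{1}{7} - 1 \right) \;=\; 10.
$$
Since $SH^{+,S^1}$ is (approximately) $\mu_P$-periodic in its growth pattern in the index-positive case, a difference in $\mu_P$ by a factor of two already strongly suggests distinct graded invariants. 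To promote this heuristic to a proof, I would enumerate all Morse--Bott submanifolds: for each subset $J \subseteq \{0,1,2,3\}$ I record the sub-link $K(J) = L(\{a_j\}_{j \in J})$, its $S^1$-equivariant Euler characteristic (by Milnor--Orlik and Randell, as in~\eqref{eq:rk_middle_dim_hom}, or directly via the rational Gysin sequence), and its shift from~\eqref{eq_Maslov_index_Brieskorn}. Assembling the $E^1$-page, and using lacunarity to conclude that $E^\infty = E^1$, lets one read off the ranks $sb_i$ in any chosen range.

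The main obstacle is the routine but lengthy bookkeeping involved in tabulating shifts and $S^1$-equivariant Euler characteristics for each of the (up to $2^4 = 16$) subsets of the exponent set, organized by the period one period of the principal orbit. I expect that comparing the two $E^1$-pages over the window $[0,20]$, corresponding to one period of the richer link $L(2,3,7,22)$, will already exhibit a degree $i$ in which $sb_i(L(2,3,7,22)) \neq sb_i(L(3,3,4,7))$; the mismatch in $\mu_P$ guarantees that such an $i$ must exist, since a common value of $\chi_m$ together with differing $\mu_P$'s forces different total weighted sums $\sum_{i=0}^{\mu_P - 1}(-1)^i sb_i$ over one period. This discrepancy, combined with the contact invariance of $SH^{+,S^1}$ established in the first step, completes the proof.
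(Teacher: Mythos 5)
Your overall strategy is exactly the paper's: verify that both Morse--Bott spectral sequences are lacunary so that Lemma~\ref{lemma:equi_SH_invariant} makes the graded groups $SH^{+,S^1}_*$ (not just $\chi_m$) contact invariants, then exhibit a degree where the ranks differ. The paper does precisely this and records the concrete outcome $\rk SH^{+,S^1}_0(V(2,3,7,22);\bbq)=6$ versus $\rk SH^{+,S^1}_0(V(3,3,4,7);\bbq)=7$. Your computations of $\mu_P=20$ and $\mu_P=10$ are also correct.

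However, the step you offer in place of the actual tabulation is fallacious: equality of $\chi_m$ together with $\mu_P(L(2,3,7,22))=20\neq 10=\mu_P(L(3,3,4,7))$ does \emph{not} force some $sb_i$ to differ. The quantity $\mu_P$ is \emph{a} period of the eventual degree-periodicity of the Betti numbers, not an invariant recoverable from the graded groups: if the two sequences $(sb_i)$ happened to coincide, the common sequence would simply be periodic with period $\gcd(20,10)=10$, and the alternating sum over any window of length $20$ would equal twice the sum over a window of length $10$, so both normalizations return the same $\chi_m$ with no contradiction. Your comparison of ``total weighted sums over one period'' compares sums over windows of different lengths, which is why it appears to give a discrepancy. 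So the proof genuinely requires carrying out the bookkeeping you describe (enumerating the sub-links $K(I_T)$, their shifts and equivariant Euler characteristics, and assembling the $E^1$-pages); once done, the ranks already disagree in degree $0$, which is what the paper uses. Without that computation, or with only the $\mu_P$ heuristic, the argument is incomplete.
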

\begin{proof}
We write out the Morse-Bott spectral sequence~\ref{thm:SS_equivariant_homology} and apply Lemma~\ref{lemma:equi_SH_invariant} to verify this.
Both spectral sequences are lacunary, so $SH^{+,S^1}_k=\oplus_{p+q=k} E^1_{pq}$ are invariants of the contact structure.
Note that $\rk SH^{+,S^1}_0( V(2,3,7,22);\bbq)=6$, whereas
$$\rk SH^{+,S^1}_0( V(3,3,4,7);\bbq)=7.$$
\end{proof}

\subsection{Positive BP links in dimension 5 that are rational homology spheres and not diffeomorphic to $S^5$} 
The positive BP links in dimension 5 that are nontrivial rational homology spheres are classified in Table B.4.1 in \cite{BG05}. Here we list those with a countable number of Sasaki-Seifert structures together with their mean Euler characteristic. 

\begin{align*}
\begin{alignedat}{2}
M_2 &\quad
L(2,3,3,3+6k) &\quad
\frac{3+10k}{6+4k} \\
M_3 &\quad
L(2,3,4,4+12k) &\quad
\frac{4+21k}{8+6k} \\
 &\quad
L(2,3,4,8+12k) &\quad
\frac{11+21k}{10+6k} \\
\end{alignedat}
\end{align*}

\begin{align*}
\begin{alignedat}{2}
M_5 &\quad
L(2,3,5,6+30k) &\quad
\frac{6+45k}{12+10k} \\
&\quad
L(2,3,5,12+30k) &\quad
\frac{15+45k}{14+10k} \\
&\quad
L(2,3,5,18+30k) &\quad
\frac{24+45k}{16+10k} \\
&\quad
L(2,3,5,24+30k) &\quad
\frac{33+45k}{18+10k} \\
\end{alignedat}
\end{align*}

\begin{align*}
\begin{alignedat}{2}
2M_3 &\quad
L(2,3,5,10+30k) &\quad
\frac{4+27k}{8+6k} \\
&\quad
L(2,3,5,20+30k) &\quad
\frac{13+27k}{10+6k} \\
4M_2 &\quad
L(2,3,5,15+30k) &\quad
\frac{3+18k}{6+4k} \\
\end{alignedat}
\end{align*}

It follows that these rational homology spheres have a countable infinity of inequivalent contact structures of positive Sasaki type. This proves Theorem \ref{rathomsphcomp} of the Introduction. Note that if $k>1$ Theorem \ref{Lichbd} says that these links cannot admit an SE metric. It is unknown when $k=1$. Nevertheless, all of the above rational homology spheres with the exception of $4M_2$ are known to  admit SE metrics \cite{BG05}. Whether $4M_2$ admits an SE metric has so far proven to be quite elusive. 

\subsection{Positive BP links on the connected sums $k(S^2\times S^3)$}
Brieskorn manifolds diffeomorphic to $(n-1)S^2\times S^3$ are given by the link $L(2,2,p,q)$ with $n=\gcd(p,q)$. Here $n=1$ means $S^5$. These generally do not admit SE metrics, however.  

\begin{lemma}\label{ks2s3}
The Brieskorn manifolds $L(2,2,p,q)\approx (n-1)(S^2\times S^3)$ where $n=\gcd(p,q)$ satisfy
$$\chi_m(L(2,2,p,q))= \frac{pq+n^2}{2(p+q)}.$$
\end{lemma}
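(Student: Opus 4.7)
The plan is to apply the mean Euler characteristic formula~(\ref{eq:MEC_general}) from Proposition~\ref{prop:mean_euler_S^1-orbibundle} to the standard quasi-regular Sasakian Reeb flow on $L(2,2,p,q)$, whose smoothed Brieskorn singularity is a convenient Stein filling. The argument will parallel the detailed example for $L(2,3,4,4+12k)$ worked out earlier. Setting $n=\gcd(p,q)$ and $L=\lcm(p,q)=pq/n$, I would first enumerate the Morse--Bott orbit spaces as the maximal Brieskorn sub-links $L(\{a_j\}_{j\in J})\subset L(2,2,p,q)$ for $J\subseteq\{0,1,2,3\}$. The main contributors are the Hopf link $L(2,2)=\{z_2=z_3=0\}\cap M$ at period $2$, the $n$-component torus link $L(p,q)=\{z_0=z_1=0\}\cap M$ at period $L$, and the principal orbit space $M$ at period $d=\lcm(2,p,q)$. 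Depending on the parities of $p,q$ and on divisibility relations between them, additional intermediate lens-space strata $L(2,2,p)$ and $L(2,2,q)$ may also appear at periods $\lcm(2,p)$ and $\lcm(2,q)$.

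Next I would compute the equivariant Euler characteristics: $\chi^{S^1}(L(2,2))=2$, $\chi^{S^1}(L(p,q))=n$, and $\chi^{S^1}(L(2,2,r))=2$ for each intermediate lens-space stratum (whose orbit-space quotient is topologically $\bbc P^1$). The principal contribution $\chi^{S^1}(M)$ requires a separate argument: using the classical identification $L(2,2,p,q)\cong(n-1)(S^2\times S^3)$ together with the Gysin sequence for the Seifert fibration $S^1\to M\to V$ and the hard Lefschetz property on the K\"ahler orbifold $V$, one obtains $b_0(V)=b_4(V)=1$, $b_1(V)=b_3(V)=0$, $b_2(V)=n$, and therefore $\chi^{S^1}(M)=\chi(V)=n+2$. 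The Robbin--Salamon indices can then be read off from~(\ref{eq_Maslov_index_Brieskorn}), and a short parity check shows every shift $\mu(\Sigma)-\tfrac{1}{2}(\dim\Sigma-1)$ is even, so all signs in~(\ref{eq:MEC_general}) are $+1$. From~(\ref{eq:principal_orbit}) the principal Maslov index is $\mu_P=2(p+q)/n$ when $d=L$ and $\mu_P=4(p+q)/n$ when $d=2L$ (the latter being precisely the case where both $p,q$ are odd). The multiplicities $\phi$ come directly from~(\ref{eq:phi_function}).

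Finally, substituting into~(\ref{eq:MEC_general}), the weighted contributions from all orbit spaces should sum to $L+n$ in the case $d=L$ and to $2(L+n)$ in the case $d=2L$; dividing by $\mu_P$ in each case produces the uniform value
\[
\chi_m(L(2,2,p,q))=\frac{n(L+n)}{2(p+q)}=\frac{pq+n^2}{2(p+q)}.
\]
The main obstacle will be the bookkeeping across the parity cases: the precise list of intermediate orbit spaces and the values of the $\phi$-multiplicities change according to whether $p,q$ are odd or even and to the divisibility relations between them, so the cancellations that produce the stated numerators must be checked in each subcase. Fortunately the final answer is insensitive to these variations, so once the pattern is recognized the verification is routine.
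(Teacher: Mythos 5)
Your proposal is correct and follows essentially the same route as the paper, which gives no separate proof of this lemma beyond the remark that the remaining cases ``follow the same procedure'' as the worked example for $L(2,3,4,4+12k)$: enumerate the Morse--Bott strata, compute their equivariant Euler characteristics, $\phi$-multiplicities and shifts, and substitute into the mean Euler characteristic formula. Your identification of the strata, the values $\chi^{S^1}(L(2,2))=2$, $\chi^{S^1}(L(2,2,r))=2$, $\chi^{S^1}(L(p,q))=n$, $\chi^{S^1}(M)=n+2$, and the two values of $\mu_P$ according to whether $\lcm(2,p,q)$ equals $\lcm(p,q)$ or twice it, all check out and yield the stated formula $\chi_m=(pq+n^2)/(2(p+q))$.
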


Thus, for each $n>1$ the 5-manifolds $(n-1)(S^2\times S^3)$ have a countably infinite number of inequivalent contact structures. This proves Theorem \ref{ks2s3comp} of the Introduction.

Concerning SE metrics, it is known \cite{BG05} that there are infinitely many Sasaki-Seifert structures admitting SE metrics on the manifolds $k(S^2\times S^3)$ for $k>2$. However, these do not occur on Brieskorn manifolds. 

\subsection{Homotopy $7$-Spheres with Sasaki-Einstein Metrics}\label{homotopy7sphsec}
Below is a table which gives the number of Brieskorn homotopy $7$-spheres that admit Sasaki-Einstein metrics and which can be distinguished by the mean Euler Characteristic of a Liouville filling. The existence of pairs indicates that the mean Euler characteristic of two coincide. There is also one case of a triple which occurs with signature 15. $N$ is the number of Brieskorn homotopy $7$-spheres with the indicated signature. It also indicates the number of Sasaki-Seifert structures. The last column gives the number of pairs with the same mean Euler characteristic. From this one easily obtains lower bounds on $|\pi_0(\gM^{SE}(\grS^7))|$. The table is based on the Excel file that can be found in \url{http://www.math.snu.ac.kr/~okoert/tools/BP7_list_same_mec_exo.xls}. In the table the Milnor signature, indicating the oriented homotopy $7$-sphere from $0,\ldots,27$, is given by the column labeled exo. 
\bigskip

\centerline{Oriented Homotopy $7$-spheres with SE metrics}
\bigskip
\begin{center}
\begin{tabular}{| l | r | r || r | r | r |}
\hline
sig & $N$ & pairs & sig & N & pairs \\ \hline
0 & 353 &      0     & 14  & 390 & 1   \\ \hline
1 & 376 &     0      & 15  & 409 & 0   \\ \hline
2 & 336 & 2          & 16  & 352 & 3   \\ \hline
3 & 260 & 1          & 17  & 226 & 1   \\ \hline
4 & 294 & 1          & 18  & 260 & 0   \\ \hline
5 & 231 & 4          & 19  & 243 & 0   \\ \hline
6 & 284 & 2          & 20  & 309 & 1   \\ \hline
7 & 322 & 1          & 21  & 292 & 1   \\ \hline
8 & 402 & 2          & 22  & 425 & 1   \\ \hline
9 & 317 & 1          & 23  & 307 & 2   \\ \hline
10&309 & 5          & 24  & 298 & 0   \\ \hline
11&252 & 2          & 25  & 230 & 1   \\ \hline
12&304 & 0          & 26  & 307 & 2   \\ \hline
13&258 & 0          & 27  & 264 & 0   \\ \hline

\end{tabular}
\end{center}
\medskip

These are not true lower bounds since we have not accounted for the SE metrics on the homotopy sphere $S^7$ in \cite{GhKo05}. However, for these metrics the diffeomorphism type has not been determined.

\subsection{Homotopy $9$-Spheres with Sasaki-Einstein Metrics}\label{homotopy9sphsec}
For homotopy spheres of dimension $9$ that admit an SE metric, we present an Excel table that can be found in \url{http://www.math.snu.ac.kr/~okoert/tools/BP9_list984.xls}. One can use this table and the invariance of the mean Euler characteristic to distinguish the contact structures. The column labelled sig indicates which homotopy $9$ in $bP_{10}$, $1$ indicates the standard sphere, and $3$ the exotic Kervaire sphere. This proves Theorem \ref{s9SEthm}.

\subsection{Higher dimensional Diffeomorphism Types}
Next we consider links studied in \cite{BG05h} and discussed in Section 9.5.2 of \cite{BG05}. These do not admit SE metrics, but they are probably easier to work with. First we treat dimension 1 mod 4. The first case is
$$f=z_0^{8l}+z_1^2+\cdots +z_{2n+1}^2=0.$$
The link $L(8l,2,\ldots,2)$ has diffeomorphism type $S^{2n}\times S^{2n+1}$. Next we have the link $L(8l+4,2,\ldots,2)$ with
$$f=z_0^{8l+4}+z_1^2+\cdots +z_{2n+1}^2=0$$
whose diffeomorphism type is $S^{2n}\times S^{2n+1}\#\grS^{4n+1}$ where $\grS^{4n+1}$ is a generator of the group $bP_{4n+2}$. Note that $bP_{4n+2}\approx \bbz_2$ when $n\neq 2^i-1$ for some $i=1,2,\ldots$, so $\grS^{4n+1}$ is exotic is in this case.  The last case of this type is $L(4l+2,2,\ldots,2)$ given by the polynomial
$$f=z_0^{4l+2}+z_1^2+\cdots +z_{2n+1}^2=0.$$
Here the diffeomorphism type is the unit tangent sphere bundle $T$ of $S^{2n+1}$. There are relations such as $T\# \grS^{4n+1}$ is diffeomorphic to $T$, and we can consider more general connected sums; however, it is often difficult to determine the precise diffeomorphism type.

\begin{proposition}\label{infcon}
There are a countably infinite number of inequivalent contact structures of Sasaki type on the manifolds $S^{2n}\times S^{2n+1},S^{2n}\times S^{2n+1}\#\grS^{4n+1}$ and $T$, where $\grS^{4n+1}$ is a generator of the group $bP_{4n+2}$. 
\end{proposition}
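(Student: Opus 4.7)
The plan is to distinguish the contact structures on the Brieskorn links $L(m,2,\ldots,2)$ (with $2n+1$ twos and $m$ even) via the mean Euler characteristic of the natural Stein filling, showing it to be a nonconstant M\"obius function of $m$. This reduces the case $n\ge 2$ to a direct computation in the framework of Proposition~\ref{prop:mean_euler_S^1-orbibundle}, while the case $n=1$ will follow from Theorem~\ref{ks2s3comp}.

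First, I would verify that Proposition~\ref{prop:mean_euler_S^1-orbibundle} applies. Simple connectivity of $L(m,2,\ldots,2)$ is immediate from the Milnor fibration theorem, symplectic triviality of $T(\bbr\times M)|_M$ follows from parallelisability of the Milnor fiber, and the natural Sasakian Reeb flow is holomorphic on the cone, hence complex linear. With weights $(1,m/2,\ldots,m/2)$ the Reeb flow is quasi-regular and has exactly two orbit strata: the principal stratum of period $m$ and the exceptional stratum $\{z_0=0\}\cong L(2,\ldots,2)\cong T^1 S^{2n}$ of period $2$. Formula~\eqref{eq:principal_orbit} gives $\mu_P=2+(2n-1)m>0$, so $\chi_m$ is a contact invariant.

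Next I would compute the Euler characteristics of the two orbit spaces. The quotient $L(2,\ldots,2)/S^1$ is a smooth quadric in $\bbc\bbp^{2n}$ of odd complex dimension $2n-1$, so $\chi=2n$. For $L(m,2,\ldots,2)/S^1$---a degree-$m$ hypersurface in $\bbc\bbp(1,m/2,\ldots,m/2)$---I would stratify by the affine chart $\{z_0\neq 0\}$, on which the hypersurface is the affine quadric $\{1+z_1^2+\cdots+z_{2n+1}^2=0\}\simeq S^{2n}$ contributing $\chi=2$, and the closed stratum $\{z_0=0\}$, where it is the odd-dimensional quadric contributing $\chi=2n$; since topological Euler characteristic ignores the $\bbz_{m/2}$ orbifold structure, the total is $2n+2$. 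Parity checks with $m$ even show both Morse--Bott signs in~\eqref{eq:MEC_general} equal $+1$; combined with $\phi_{m;\emptyset}=1$ and $\phi_{2;m}=m/2-1$, formula~\eqref{eq:MEC_general} yields
\begin{equation*}
\chi_m\bigl(L(m,2,\ldots,2)\bigr)=\frac{(m/2-1)\cdot 2n+(2n+2)}{2+(2n-1)m}=\frac{nm+2}{(2n-1)m+2}.
\end{equation*}

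For $n\ge 2$ the cross-determinant $n\cdot 2-2(2n-1)=2(1-n)$ is nonzero, so $m\mapsto\chi_m$ is injective on positive even integers; hence each of the three families $m=8l$, $m=8l+4$, $m=4l+2$ yields infinitely many pairwise inequivalent contact structures, and the diffeomorphism types $S^{2n}\times S^{2n+1}$, $S^{2n}\times S^{2n+1}\#\grS^{4n+1}$, and $T$ are fixed within each family by the topology results of~\cite{BG05h}. For $n=1$ (dimension $5$) all three families collapse to $S^2\times S^3$ (using $bP_6=0$ so $\grS^5=S^5$, and $T^1 S^3\cong S^2\times S^3$), and the conclusion follows from Theorem~\ref{ks2s3comp}. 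The main subtlety lies in the Euler characteristic computation for $L(m,2,\ldots,2)/S^1$, where one must check that the weighted projective/orbifold structure does not disturb the topological count and that the affine piece at $\{z_0\neq 0\}$ is correctly identified with the smooth affine quadric.
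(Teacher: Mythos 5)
Your proposal is correct and follows essentially the same route as the paper: the paper's proof simply invokes the mean Euler characteristic formula from Appendix A of \cite{KwvKo13} to distinguish the three families, whereas you rederive that formula explicitly from Proposition~\ref{prop:mean_euler_S^1-orbibundle}, arriving at $\chi_m(L(m,2,\ldots,2))=\frac{nm+2}{(2n-1)m+2}$, which is consistent with the known value $\chi_m(ST^*S^{2n+1})=\frac{n+1}{2n}$ at $m=2$ and with Lemma~\ref{ks2s3} at $n=1$. Your observation that the invariant degenerates to the constant $1$ when $n=1$ (so that case must be handled by Theorem~\ref{ks2s3comp}) is accurate and consistent with the paper's restriction of Theorem~\ref{highdim} to dimensions greater than $5$.
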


\begin{proof}
We distinguish the contact structures in the three cases by using the formula for the mean Euler characteristic given in Appendix A of \cite{KwvKo13}.
\end{proof}

Proposition \ref{infcon} proves Theorem \ref{highdim} of the Introduction.

\begin{remark}
Since these Brieskorn manifolds have a Sasaki automorphism group which contains $SO(2n+1)$ owing to the $2n+1$ terms with exponent equal to $2$, they have a Sasaki cone of dimension $n+1$. One can then ask whether there is a deformation in the Sasaki cone to a Sasaki-Einstein metric? The answer is no due to the work of Martelli, Sparks, Yau \cite{MaSpYau06} and He \cite{He14}.
\end{remark}

\section{Different moduli of Sasaki-Einstein metrics on spheres using the Sylvester sequence}\label{Sylsec}
Define the Sylvester sequence $\{ c_i \}_{i=0}^\infty$ recursively by
\[
c_i=c_{i-1}\cdot(c_{i-1}-1)+1,\quad c_0=2.
\]
We have $c_i=1+\prod_{j=0}^{i-1}c_j$, and we see that the $c_i$'s are pairwise relatively prime.
We will consider the Brieskorn-Pham links 
$$
L^{2n+3}(a)=L(2,2c_0,\ldots,2c_n,a),
$$
where $a$ is relatively prime to the $c_i$'s.
By the proof of Theorem~11.5.5 from \cite{BG05} these manifolds carry Sasaki-Einstein metrics.

To compute the mean Euler characteristic, we need to identify the orbit spaces of the Reeb action.
\begin{itemize}
\item The principal orbits are the full space $L(2,2c_0,\ldots,2c_{n},a)$.
\item The strata of exceptional orbits of dimension $2k+1$ (so of codimension $2n-2k+2$) are formed by the Brieskorn submanifolds
$L(2,2c_{i_0},\ldots, 2c_{i_k})$ and 
$L(2,2c_{i_0},\ldots,2c_{i_{k-1}},a)$.
\end{itemize}
In the following lemma we compute the equivariant Euler characteristics of Brieskorn submanifolds.
\begin{lemma}
\label{lemma:euler_chars_sylvester}
If $n$ is odd, then $L^{2n+3}(a)$ is homeomorphic to $S^{2n+3}$.
Furthermore, if $i_0<\ldots<i_k$, then the Brieskorn submanifolds \linebreak $L(2,2c_{i_0},\ldots,2c_{i_k})$ and $L(2,2c_{i_0},\ldots,2c_{i_k},a)$
satisfy
\begin{equation}
\label{eq:chi_type_no_a}
\chi^{S^1}(L(2,2c_{i_0},\ldots,2c_{i_k})\,)=k+2+\frac{1}{2}(1-(-1)^{k+1})
\end{equation}
\begin{equation}
\label{eq:chi_type_a}
\chi^{S^1}(L(2,2c_{i_0},\ldots,2c_{i_k},a)\,)=k+3.
\end{equation}
In particular, the equivariant Euler characteristics are independent of $a$ provided $a$ satisfies the relatively prime condition.
\end{lemma}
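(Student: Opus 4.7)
The plan is to establish the two assertions of the lemma separately.

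For the claim that $L^{2n+3}(a)$ is homeomorphic to $S^{2n+3}$ when $n$ is odd, I would invoke Brieskorn's classical sphere-recognition criterion. Since $2n+3\equiv 1\pmod 4$ exactly when $n$ is odd, the relevant criterion is that the characteristic polynomial of the monodromy, computed combinatorially from the Brieskorn graph of the exponents, satisfy the standard arithmetic condition at $t=1$. For our exponent vector the graph is highly constrained: the pairwise coprimality of the Sylvester terms $c_i$ together with the coprimality of $a$ with each $c_i$ forces $\{2,2c_0,\ldots,2c_n\}$ to form a clique joined by the common factor $2$, while $a$ is essentially decoupled from the rest. This is enough structure to make the characteristic polynomial computable and to verify the sphere criterion for odd $n$. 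Alternatively one can cite BG05 Theorem~11.5.5 directly, since the Sylvester construction is essentially the one used there.

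For the equivariant Euler characteristic formulas, the key input is the observation noted already in the paper that the Reeb $S^1$-action on each stratum has only finite isotropy and no fixed points, so $\chi^{S^1}(\Sigma)=\chi(\Sigma/S^1)$ as topological Euler characteristic of the underlying space. The quotient is a weighted projective hypersurface, whose Euler characteristic I would compute by combining two ingredients: the Milnor--Orlik formula for the middle Betti number of the BP link, which is essentially equation~\eqref{eq:rk_middle_dim_hom} in the paper, and the rational Gysin sequence of the Seifert $S^1$-bundle $\Sigma\to\Sigma/S^1$, which determines all Betti numbers of the quotient from the Betti numbers of $\Sigma$ together with cup-product by the Euler class. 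The pairwise coprimality of the Sylvester sequence causes every $\operatorname{lcm}$ appearing in the Milnor--Orlik sum to collapse into a pure product, and the alternating sum over subsets then telescopes dramatically, producing the clean closed-form answers. In the case containing $a$, the coprimality of $a$ with each $c_i$ means that $a$ enters the subset sum only through the divisibility relation $\gcd(a,2c_i)=\gcd(a,2)$, which is independent of $i$; this is the structural reason for independence of the specific value of $a$, yielding the uniform answer $k+3$.

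The main obstacle I expect is the combinatorial bookkeeping needed to verify the parity-dependent correction $\tfrac12(1-(-1)^{k+1})$ in the first formula. This term arises from a Poincar\'e-duality pairing at the middle dimension of $\Sigma/S^1$ that contributes with opposite signs for $k$ even versus $k$ odd, and keeping track of it cleanly will likely require an induction on $k$. A promising reduction exploits the distinguished exponent $2$: the polynomial $z_0^2+\sum z_j^{2c_{i_{j-1}}}$ presents $\Sigma$ as a branched double cover of a link in the remaining variables, which should allow one to relate $\chi(\Sigma/S^1)$ to a lower-dimensional case, setting up the induction and isolating the parity-dependent contribution as the branch-locus correction term.
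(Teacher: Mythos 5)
Your toolbox is the paper's toolbox: the homeomorphism claim via the Brieskorn graph theorem, and the Euler characteristics via $\chi^{S^1}(\Sigma)=\chi(\Sigma/S^1)$ together with Milnor--Orlik and the Gysin/Randell description of the quotient. Two remarks on the parts that do go through. For the homeomorphism claim, the decisive point is that the component $\{2,2c_0,\ldots,2c_n\}$ of the Brieskorn graph has $n+2$ vertices with pairwise $\gcd$ equal to $2$, so it has an odd number of vertices exactly when $n$ is odd; you gesture at the clique structure but leave this parity count, which is the entire role of the hypothesis on $n$, implicit. For the formula \eqref{eq:chi_type_a}, the paper is more economical than your plan: since $a$ is an isolated vertex of the graph, Theorem~9.3.18~i) of \cite{BG05} makes $L(2,2c_{i_0},\ldots,2c_{i_k},a)$ a rational homology sphere outright, and the Euler characteristic of the quotient then depends only on its dimension (the paper quotes Lemma~4.5 of \cite{FSvK12}). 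No subset sum involving $a$ is ever evaluated, and independence of $a$ is automatic rather than something to be read off from divisibility bookkeeping as you propose.

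The genuine gap is in \eqref{eq:chi_type_no_a}. You correctly locate the difficulty in the parity term $\tfrac12\bigl(1-(-1)^{k+1}\bigr)$, but you do not produce it: the assertion that ``the alternating sum over subsets then telescopes dramatically'' \emph{is} the proof, and it is the one step you skip. Concretely, the paper evaluates the Milnor--Orlik sum for $\rk H_{k}\bigl(L(2,2c_{i_0},\ldots,2c_{i_k})\bigr)$: pairwise coprimality of the $c_i$ makes every ratio $\prod_{i\in I_\ell}a_i/\lcm_{i\in I_\ell}a_i$ collapse to $2^{\ell-1}$, the subsets are counted by $\binom{k+1}{\ell-1}+\binom{k+1}{\ell}$ according to whether they contain the exponent $2$, and the binomial theorem yields $\tfrac12\bigl(1-(1-2)^{k+1}\bigr)=\tfrac12\bigl(1-(-1)^{k+1}\bigr)$. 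This extra middle Betti number of the link (present exactly when $k$ is even) is then fed into Randell's Theorem~1 to get the Euler characteristic of the orbit space; it is not a ``Poincar\'e-duality pairing contributing with opposite signs.'' Your proposed fallback --- an induction on $k$ using the exponent $2$ to present the link as a branched double cover --- is not set up: one would have to carry the covering down to the $S^1$-quotients, identify the branch locus as an orbifold divisor, and control its Euler characteristic, none of which is sketched and all of which is at least as hard as the direct computation. As written, the hardest step of the lemma is asserted rather than proved.
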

\begin{proof}
The first claim follows from the Brieskorn graph theorem, see Theorem~9.3.18 ii) from \cite{BG05}.
Just note that $a$ is an isolated point and that there is an odd number of vertices $\{2,2c_0,\ldots,2c_n\}$ with pairwise $gcd$ equal to $2$.

Formula~\eqref{eq:chi_type_a} is also proved with the Brieskorn graph theorem, namely Theorem~9.3.18 i) from \cite{BG05} tells us that $L(2,\{ 2c_{i_j} \}_{j},a)$ is a rational homology sphere since the vertex $\{a \}$ is an isolated point.
The claim about the Euler characteristic then follows from Lemma~4.5 in \cite{FSvK12}.

To prove formula~\eqref{eq:chi_type_no_a}, we use a formula from Milnor-Orlik \cite{MiOr70} (cf. Corollary 9.3.13 of \cite{BG05}).
We compute $\kappa=\rk H_{k}(L(2,2c_{i_0},\ldots,2c_{i_k});\bbz)$,
\begin{align*}
(-1)^{k+2}\kappa&=
1-(k+2)+
\sum_{\ell=2}^{k+1} (-1)^\ell
\sum_{I_\ell\subset I} \frac{\prod_{i\in I_\ell}a_i}{\lcm_{i\in I_\ell}a_i}
+(-1)^{k+2} \frac{2^{k+2}}{2}
\\
&=
-(k+1)
+\sum_{\ell=2}^{k+1}(-1)^\ell 2^{\ell-1} { {k+1}\choose{\ell-1} } \\
&~~
+
\sum_{\ell=2}^{k+1}(-1)^{\ell} 2^{\ell-1}
{ {k+1}\choose{\ell} }
+(-1)^{k+2} \frac{2^{k+2}}{2} \\
&=-(k+1)
+\sum_{\ell^\prime=1}^k (-1)^{\ell^\prime+1}2^{\ell'}
{ {k+1}\choose{\ell^\prime} }\\
&~~
+
\sum_{\ell=2}^{k+1}(-1)^{\ell} 2^{\ell-1}
{ {k+1}\choose{\ell} }
+(-1)^{k+2} \frac{2^{k+2}}{2} \\
&=
-(k+1)+\sum_{\ell=2}^k(-1)^{\ell+1}(2^\ell-2^{\ell-1})
{ {k+1}\choose{\ell} } \\
&~~~
+2(k+1)+(-1)^{k+1} \frac{2^{k+1}}{2}
+(-1)^{k+2} \frac{2^{k+2}}{2} \\
&=
-\frac{1}{2}\sum_{\ell=0}^{k}(-1)^\ell 2^\ell 
{ {k+1}\choose{\ell} }
+(-1)^{k+2} \frac{2^{k+1}}{2}+\frac{1}{2}\\
&
=\frac{1}{2}(1-(1-2)^{k+1} )
=\frac{1}{2}(1-(-1)^{k+1}).
\end{align*}
Combine this with Theorem~1 from \cite{Ran75} to obtain formula~\eqref{eq:chi_type_no_a}.
\end{proof}

To finish the computation of the numerator of Formula~\eqref{eq:MEC_general}, we determine the $\phi$-functions, \eqref{eq:phi_function}, by downward induction.
The principal orbits $L(2,2c_0,\ldots,2c_n,a)$ occur once.
The exceptional orbits $L(2,2c_{i_0},\ldots,2c_{i_k})$ occur 
$$
(a-1)\prod_{j=0, j\notin\{ i_0,\ldots,i_\ell\} }^n
(c_j-1)
$$
times, and the exceptional orbits $L(2,2c_{i_0},\ldots,2c_{i_k},a)$ occur
$$
\prod_{j=0, j\notin\{ i_0,\ldots,i_\ell\} }^n
(c_j-1)
$$
many times without being part of a larger orbit space.

We conclude that numerator of $\chi_m(L^{2n+3}(a))$ is given by
\[
\begin{split}
\chi^{2n+1}(a) &=
\sum_{\ell=0}^n \sum_{i_0<\ldots<i_\ell}
\left(
\prod_{j=0, j\notin\{ i_0,\ldots,i_\ell\} }^n
(c_j-1)
\right)
(a-1)
\chi^{S^1}(L(2,2c_{i_0},\ldots,2c_{i_\ell})\, ) \\
&~~~~+
\sum_{\ell=1}^n \sum_{i_0<\ldots<i_\ell}
\left(
\prod_{j=0, j\notin\{ i_0,\ldots,i_\ell\} }^n
(c_j-1)
\right)
\chi^{S^1}(L(2,2c_{i_0},\ldots,2c_{i_\ell},a)\, ).
\end{split}
\]
By Lemma~\ref{lemma:euler_chars_sylvester} we see that coefficient of $(a-1)$ is positive, and the second term is independent of $a$, so this function is a linearly increasing in $a$.
Furthermore, from Formula~\eqref{eq:principal_orbit} we see that the function $a\mapsto 1/\mu_P(a)$ is strictly increasing as long as $a$ is relatively prime to the $c_i$'s and the log Fano condition $\mu_P>0$ holds.
It follows that the function $a\mapsto \chi_m(a)$ is an injective function of $a$.

The proof of Theorem~11.5.5 from \cite{BG05} hence implies
\begin{proposition}
The contact manifolds $(L^{2n+3}(a),{\mathcal D}_a)$ are pairwise non-isomorphic (as contact manifolds).
In particular, for $n=3,5,7,\ldots$ the number of components of the moduli space of Sasaki-Einstein metrics on either $S^{2n+3}$ or the Kervaire sphere $\Sigma^{2n+3}$ is growing doubly exponentially with $n$.
\end{proposition}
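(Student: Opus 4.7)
The plan is to convert the strict monotonicity of $a \mapsto \chi_m(L^{2n+3}(a))$—already established in the paragraph preceding the statement—into a pairwise non-contactomorphism statement via contact invariance of the mean Euler characteristic, and then to bound the count of admissible $a$'s from below using the growth of the Sylvester sequence. First I would verify the hypotheses of Proposition~\ref{prop:mean_euler_S^1-orbibundle} for $(L^{2n+3}(a),\mathcal{D}_a)$: simple connectedness follows from the Milnor fibration theorem (since $2n+3\geq 5$); the natural Stein filling by the smoothed singularity is convenient because it embeds in $\bbc^{n+3}$ with trivial symplectic tangent bundle and $c_1=0$; the linearized Reeb flow on the symplectization is diagonal in the weighted splitting of $\bbc^{n+3}$, hence complex linear; and the log Fano condition $\mu_P(a)>0$ can be rewritten via the Sylvester identity $\sum_{i=0}^n 1/c_i = 1 - 1/(c_0 c_1 \cdots c_n)$ as
\[
\mu_P(a) \,=\, 4\,c_0 c_1 \cdots c_n - 2a \,>\, 0,
\]
i.e.\ $a<2c_0 c_1\cdots c_n$. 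Here I use $\gcd(a,\,2c_0\cdots c_n)=1$ together with pairwise coprimality of the $c_i$'s to obtain $\lcm(2,2c_0,\ldots,2c_n,a)=2a c_0 \cdots c_n$.

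Once Proposition~\ref{prop:mean_euler_S^1-orbibundle} applies, $\chi_m$ is a contact invariant of each admissible link, and the strict monotonicity of $a\mapsto \chi_m(L^{2n+3}(a))$ then forces $(L^{2n+3}(a),\mathcal{D}_a)$ to be pairwise non-contactomorphic for distinct admissible $a$. Next I would count admissible $a$'s: these are the positive integers less than $2c_0\cdots c_n=2(c_{n+1}-1)$ coprime to $2c_0\cdots c_n$, of cardinality $\varphi(2c_0\cdots c_n)$. The recursion $c_{n+1}=c_n(c_n-1)+1$ forces $c_{n+1}-1=c_0 c_1\cdots c_n$ and $c_n\sim \alpha^{2^n}$ for some constant $\alpha>1$, while $\varphi(m)\gg m/\log\log m$; hence $\varphi(2c_0\cdots c_n)$ grows doubly exponentially in $n$.

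To reach the final claim I would identify the topology: for $n$ odd, Lemma~\ref{lemma:euler_chars_sylvester} identifies $L^{2n+3}(a)$ with $S^{2n+3}$ as a topological manifold. Since $L^{2n+3}(a)$ bounds the parallelizable Milnor fiber, it represents an element of $bP_{2n+4}$, so by Kervaire--Milnor it is diffeomorphic to either the standard sphere $S^{2n+3}$ or the Kervaire sphere $\Sigma^{2n+3}$. By pigeonhole one of these two diffeomorphism types supports doubly exponentially many pairwise non-equivalent contact structures. Since each $L^{2n+3}(a)$ admits a Sasaki--Einstein metric (proof of Theorem~11.5.5 of \cite{BG05}) and the map $\gc:\gM^{SE}\to\gM^c_{+,0}$ sends SE metrics on distinct contact structures to distinct components of $\gM^{SE}$, the moduli space $\gM^{SE}$ on at least one of $S^{2n+3}$, $\Sigma^{2n+3}$ has doubly exponentially many components. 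The main obstacle---strict positivity of the slope of $\chi^{2n+1}(a)$ in $a$---is already handled before the statement by combining the positive $\chi^{S^1}$-values of Lemma~\ref{lemma:euler_chars_sylvester} with the positive $\phi$-counts, so the remaining work is arithmetic and topological book-keeping.
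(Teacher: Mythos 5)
Your proposal is correct and follows essentially the same route as the paper: the paper's own proof consists of the preceding paragraphs establishing that the numerator of $\chi_m$ is increasing and $1/\mu_P(a)$ is strictly increasing (hence $a\mapsto\chi_m(a)$ is injective), combined with contact invariance of $\chi_m$ and the topological identification and counting from Theorem~11.5.5 of \cite{BG05}. You merely make explicit several steps the paper leaves implicit (the computation $\mu_P(a)=4c_0\cdots c_n-2a$ via the Sylvester identity, the totient count of admissible $a$, and the $bP_{2n+4}$ pigeonhole), all of which check out.
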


\def\cprime{$'$} \def\cprime{$'$} \def\cprime{$'$} \def\cprime{$'$}
  \def\cprime{$'$} \def\cprime{$'$} \def\cprime{$'$} \def\cprime{$'$}
  \def\cdprime{$''$} \def\cprime{$'$} \def\cprime{$'$} \def\cprime{$'$}
  \def\cprime{$'$}
\providecommand{\bysame}{\leavevmode\hbox to3em{\hrulefill}\thinspace}
\providecommand{\MR}{\relax\ifhmode\unskip\space\fi MR }
\providecommand{\MRhref}[2]{%
  \href{http://www.ams.org/mathscinet-getitem?mr=#1}{#2}
}
\providecommand{\href}[2]{#2}

\end{document}